\let\mathbb\mathds
\DeclareMathAlphabet\mathbfcal{OMS}{cmsy}{b}{n}
\pgfplotsset{compat=1.13}
\tikzset{Rightarrow/.style={double equal sign distance,>={Implies},->},
	triple/.style={-,preaction={draw,Rightarrow}},
	quadruple/.style={preaction={draw,Rightarrow,shorten >=0pt},shorten >=1pt,-,double,double
		distance=0.2pt}}
\def\on{\operatorname}
\def\CC{\mathbb{C}}
\def\C{\EuScript{C}}
\def\D{\EuScript{D}}
\def\BB{\mathbb{B}}
\def\DD{\mathbb{D}}
\def\bS{\textbf{S}}
\def\OO{\mathbb{O}}
\def\iCat{\EuScript{C}\!\on{at}}
\def\Hom{\on{Hom}}
\def\D{\EuScript{D}}
\def\id{\on{id}}
\def\Set{\on{Set}}
\DeclarePairedDelimiterX\set[1]{\lbrace}{\rbrace}{#1}
\newlist{implications}{description}{1} 
\setlist[implications]{itemsep=0pt,leftmargin=\parindent}
\NewDocumentCommand\implication{o}
  {\IfValueTF{#1}
    {\auximplication#1\relax}
    {\item[\normalfont($\,\Rightarrow\,$)]}}
\NewDocumentCommand\auximplication{u-u\relax}
  {\item[\normalfont(#1)$\,\Rightarrow\,$(#2)]}
\newcounter{diagram}[section]
\def\thediagram{\thesection.\arabic{diagram}}
\def\ftype@diagram{4}
\def\ext@diagram{diag}
\def\fnum@diagram{Diagram~\thediagram}
\def\fs@diagram{htbp!}
\NewDocumentEnvironment{diagram}{O{htbp!}m}
  {\@float{diagram}[#1]\centering}
  {
   
   \caption{}
   \label{#2}
   \end@float
  }
\newcounter{subdiagram}[diagram]
\def\thesubdiagram{\thediagram.\arabic{subdiagram}}
\NewDocumentCommand\domultidiagram{omu\enddomultidiagram}
 {
  \IfValueTF{#1}{\diagram[#1]}{\diagram}{}
   \refstepcounter{diagram}
   \centering
    \seq_clear:N \l_tmpb_seq
    \seq_set_split:Nnn \l_tmpa_seq { \next } { #3 }
    \seq_map_inline:Nn \l_tmpa_seq
     {
      \seq_put_right:Nn \l_tmpb_seq
       {
        \begin{tabular}[b]{@{}c@{}}
         ##1 \\[3ex]
         \refstepcounter{subdiagram}
         \label{#2\othercolon\the\value{subdiagram}}
         Diagram~\thesubdiagram 
        \end{tabular}
       }
     }
    \seq_use:Nn \l_tmpb_seq { \qquad }
   \let\label\@gobble
   \let\caption\@gobble
  \enddiagram
 }
\def\othercolon{:}
\declaretheoremstyle[bodyfont=\itshape,notefont=\bfseries]{abellanA}
\declaretheoremstyle[notefont=\bfseries]{abellanB}
\declaretheorem[style=abellanA,numberwithin=section,name={Theorem}]{theorem}
\declaretheorem[style=abellanA,numberlike=theorem,name={Lemma}]{lemma}
\declaretheorem[style=abellanB,numberlike=theorem,name={Definition}]{definition}
\declaretheorem[style=abellanB,numberlike=theorem,name={Remark}]{remark}
\declaretheorem[style=abellanB,numberlike=theorem,name={Construction}]{construction}
\declaretheorem[style=abellanA,numberlike=theorem,name={Proposition}]{proposition}
\declaretheorem[style=abellanB,numbered=no,name={Notation}]{notation}
\declaretheorem[style=abellanA,numberlike=theorem,name={Corollary}]{corollary}
\newtheorem*{thm*}{Theorem}
\newtheorem*{prop*}{Proposition}
\newtheorem*{cor*}{Corollary}
\let\leq\leqslant
\let\geq\geqslant
\let\epsilon\varepsilon
\let\isom\simeq
\newcommand*\tensor{\otimes}
\newcommand*\restr[3][\relax]{#2#1\rvert_{#3}^{}}
\newcommand*\mathblank{\mathord{-}}
\let\emptyset\varnothing
\newcommand{\fixed@sra}{$\vrule height 2\fontdimen22\textfont2 width 0pt\rightarrow$}
\newcommand{\shortarrowup}[1]{%
	\mathrel{\text{\rotatebox[origin=c]{65}{\fixed@sra}}}
}
\newcommand{\shortarrowdown}[1]{%
	\mathrel{\text{\rotatebox[origin=c]{250}{\fixed@sra}}}
}
\DeclareMathOperator\Nsc{N^{sc}}
\newcommand*\dirlim{\mathop{\mathpalette\varlim@{\rightarrowfill@\scriptscriptstyle}}\nmlimits@}
\newcommand*\prolim{\mathop{\mathpalette\varlim@{\leftarrowfill@\scriptscriptstyle}}\nmlimits@}
\newcommand{\nat}{\Rightarrow}
\tikzset{
  abellanarrows/.style={line cap=round,line join=round,line width=.4pt},
  abellanarrowlength/.store in=\abellanarrowlength,
}
\NewDocumentCommand \func { s O{} m }
 {
  \group_begin:
   \IfBooleanTF{#1}
    { \keys_set:nn { abellan / func } { aligned = true , #2 } }
    { \keys_set:nn { abellan / func } {#2} }
   \abellan_func:n {#3}
  \group_end:
 }
\NewDocumentCommand \arr { s o m }
 {
  \IfBooleanF{#1}
   { \bool_if:NT \l_abellan_aligned_bool { & } }
  \abellan_arr:n {#3}
 }
\NewDocumentCommand \addarr { o m m }
 {
  \keys_set:nn { abellan / func / addarrow } { name = {#2} , #3 }
  \tl_clear:N \l_abellan_arrname_tl
 }
\NewDocumentCommand \setupfunc { m } { \keys_set:nn { abellan / func } {#1} }
\tikzset{abellanarrowlength={#1}} ,
\tikzset{abellanarrows/.append ~ style={#1}} ,
\NewDocumentCommand \abellan_addarrow:nnww { m m O{} u\q_abellan }
 {
  \exp_args:Nc \NewDocumentCommand { abellan_arr_#1_#2:w } { #3 }
   {
    \use:c { abellan_arr_ \l_abellan_arrmode_tl :n } { #4 }
   }
 }
\newcommand*\resetdynamicto
\gdef\dynamicto{\arr*{to}\gdef\dynamicto{\arr*{mapsto}}}}
\NewDocumentCommand \printheader { m o m }
 {
  \par\noindent
  \begin{minipage}[t]{\textwidth}\noindent
  
  \begin{tabular}[t]{ll}
     & \keyval_parse:NNn \abellan_printname:n \abellan_printnamemail:nn { #3 } 
  \end{tabular}
  \vspace{.4cm}
  \end{minipage}
  \begin{center}\Large\bfseries
   #1 \IfValueT{#2}{\\[1ex] \large #2}
  \end{center}
  \vspace{.6cm}
 }
\quad\texttt{#2} \\ &
				\string\usetikzlibrary{decorations.markings} to use arrows with markings}{}}{}%
\def\mbsSet{\on{Set}_{\Delta}^{\mathbf{mb}}}
\def\bS{\textbf{MB}}
\def\sS{\textbf{S}}
\def\scr{\EuScript}
\newcommand{\myitem}[1]{%
	\item[#1]\protected@edef\@currentlabel{#1}%
}
\title{2-Cartesian fibrations I: A model for $\infty$-bicategories fibred in $\infty$-bicategories}
\author{Fernando Abellán García \& Walker H. Stern}
\date{}
\begin{document}
	\maketitle
	\begin{abstract}
		In this paper, we provide a notion of $\infty$-bicategories fibred in $\infty$-bicategories which we call \emph{2-Cartesian fibrations}. Our definition is formulated using the language of marked biscaled simplicial sets: Those are scaled simplicial sets equipped with an additional collection of triangles containing the scaled 2-simplices, which we call \emph{lean triangles}, in addition to a collection of edges containing all degenerate 1-simplices.  We prove the existence of a left proper combinatorial simplicial model category whose fibrant objects are precisely the 2-Cartesian fibrations over a chosen scaled simplicial set $S$. Over the terminal scaled simplicial set, this provides a new model structure modeling $\infty$-bicategories, which we show is Quillen equivalent to Lurie's scaled simplicial set model. We conclude by providing a characterization of 2-Cartesian fibrations over an $\infty$-bicategory. This characterization then allows us to identify those 2-Cartesian fibrations arising as the coherent nerve of a fibration of $\on{Set}^+_{\Delta}$-enriched categories, thus showing that our definition recovers the preexisting notions of fibred 2-categories.
	\end{abstract}
	\tableofcontents
	\section{Introduction}
	Of Grothendieck's many insights, it may be the construction of a fibred category from a functor which most influenced the development of (higher) category theory. Right/left fibrations, (co)Cartesian fibrations, and the associated Grothendieck constructions have become integral parts of the $\infty$-categorical toolbox.  
	
	In our previous works \cite{AGS_ThmA}, \cite{AGS_TW}, and \cite{AG_cof}, we have made extensive and free use of this toolbox in our exploration of cofinality for $(\infty,2)$-categories. This paper can be seen as a necessary stepping-stone to the final phase of this exploration: providing a model structure for $(\infty,2)$-categories fibred in $(\infty,2)$-categories. The appropriate notion of (co)limits in $(\infty,2)$-categories has already been explored in \cite{GHL_LaxLim}, and the connection to the various forms of the Grothendieck construction are made exceptionally clear there. In the exposition in \cite{GHL_LaxLim}, however, the authors cleverly sidestep the need for additional technology to handle fibred $\infty$-bicategories, opting instead to work  with $\Set_\Delta^+$-enriched categories. 
	
	To generalize our cofinality criteria from \cite{AGS_ThmA} and \cite{AG_cof}, however, there is an unavoidable need for a such a theory of fibrations, as well as an associated Grothendieck construction. As stated in \cite{AG_cof} the cofinality criterion for ordinary $(\infty,1)$-categorical colimits can be understood in term of weighted colimits by proving an equivalence of weight functors with values in the category of spaces. One essential ingredient of the proof of this fact is the theory of Cartesian fibrations, i.e. functors with values in $\infty$-categories. We strongly believe that such proof will generalise in a straightforward manner once the necessary categorified theory of Cartesian fibrations is developed.  

  This paper is the first in a 2-part sequence which will provide this technology. We here define and develop a notion of \emph{2-Cartesian fibration}, using the language of simplicial sets equipped with a marking and two scalings. We then show the existence of a model structure whose fibrant objects are precisely these 2-Cartesian fibrations. In the second paper, we will develop the corresponding Grothendieck construction and establish the full $(\infty,2)$-categorical version of our results in \cite{AGS_ThmA} and \cite{AG_cof}. Needless to say, we expect the theory of 2-Cartesian fibrations to have a wide range of applications outside our cofinality framework in much the same way as Cartesian fibrations now occupy a central position in the study of $\infty$-categories.
	
	\subsection{Defining 2-Cartesian fibrations}
	  
    As one climbs up the ladder of categorification, the higher dimensionality manifests itself most obviously in the number of new variances that a functor can have. What seems like an innocent increase of complexity in the strict 2-categorical setting turns out to play a much more central role when working with $(\infty,2)$-categories. In particular, there should be \emph{four} sensible notions of $\infty$-bicategories fibred in $\infty$-bicategories. Loosely speaking, these correspond to a functor $f:\BB\to \CC$ having 
	\begin{enumerate}
		\item Cartesian lifts of 1-morphisms and coCartesian lifts of 2-morphisms;
    \item Cartesian lifts of 1-morphisms and Cartesian lifts of 2-morphisms;
    \item coCartesian lifts of 1-morphisms and Cartesian lifts of 2-morphisms;
		\item coCartesian lifts of 1-morphisms and coCartesian lifts of 2-morphisms.
	\end{enumerate}
  We will not explore all four of these notions here, instead focusing on case (1). Note that this will also immediately address case (4), since this dualization can be achieved by taking the opposite simplicial sets. We adopt the terminology employed by the authors in \cite{GHL_equiv} and denote the cases (1) and (4) as \emph{outer} 2-Cartesian (resp. 2-coCartesian) fibrations and similarly the cases (2) and (3) as \emph{inner} 2-Cartesian (resp. 2-coCartesian) fibrations. The reasons for our particular choice of variance (which could seem arbitrary to the reader) are related to the kind of cofinality we will discuss in our upcoming paper \cite{AGS_CartII}. For ease of reading, we call fibrations with our chosen variance simply \emph{2-Cartesian fibrations}, trusting that this terminology will be replaced in writings where it becomes unclear. We would like to emphasize that the techniques here presented paired with the discussion on inner fibrations in \cite{GHL_LaxLim} and \cite{LurieGoodwillie} can be adapted to produce the analogues of the results found in this paper.  
	
	There are several different kinds of clues in the literature which can help explain what 2-Cartesian fibrations should look like. Of particular interest is the work \cite{Buckley} of Buckley, which provides decategorified versions of these definitions  --- one in strict 2-categories, and one in bicategories. Unwinding Buckley's definitions (and dualizing appropriately), we find that they amount to the following conditions on a functor $F:B\to C$: 
	\begin{itemize}
		\item The induced functors
		\[
		F_{a,b}:B(a,b)\to C(a,b).
		\]
		are coCartesian fibrations. We call the coCartesian morphisms of these fibrations \emph{coCartesian 2-morphisms}. 
		\item The horizontal composite of coCartesian 2-morphisms is coCartesian. Here we are noting that the vertical composites of coCartesian morphisms are automatically coCartesian.
		\item For any 1-morphism $g: c\to F(b)$ in $C$, there is a Cartesian morphism $\tilde{g}:\tilde{c}\to b$ in $B$ lifting $g$.  
	\end{itemize}
	For the rest of this discussion we will choose to focus on coCartesian 2-morphisms. We hope that the reader will trust us in believing that there is not much novelty in defining Cartesian 1-morphisms since the definition generalises in an straightforward manner.
  
	One distinctly evident difficulty comes from the fact that a priori it seems that the definition of a coCartesian 2-morphism is dependent on the choice of a particular model for the mapping $\infty$-category. To justify our definition we will draw an analogy with the already well understood notion of an invertible 2-morphism, i.e. a thin 2-simplex. Let us suppose we are given a map of $\infty$-bicategories
  \[
    \func{p: \BB \to \CC}
  \]
  having the right lifting property against the class of scaled anodyne maps. We will call a 2-simplex $\sigma$ in $\BB$ \emph{left-degenerate} if $\sigma|_{\Delta^{\{0,1\}}}$ is degenerate. One readily verifies that a left-degenerate triangle $\sigma$ is thin if and only if $p(\sigma)$ is thin in $\CC$ and every lifting problem of the form
  \[
  \begin{tikzcd}[ampersand replacement=\&]
  \Lambda^n_0 \arrow[r,"f"] \arrow[d] \& \BB \arrow[d,"p"] \\
  \Delta^{n} \arrow[r] \& \CC
  \end{tikzcd}
  \] 
  such that $\restr{p}{\Delta^{\set{0,1,n}}}=\sigma$ admits a solution. To illuminate the previous claim, let us consider for every pair of objects $x,y \in \BB$ the mapping categories $\BB(x,y)$ defined in \cite[Section 2.3]{GHL_equiv}, whose simplices are maps $\Delta^{n+1}\to \BB$ sending the terminal vertex to $y$, and all other vertices to $x$. Then interpreting $\sigma$ as an edge in $\BB(x,y)$ we can see that the previous claim is essentially saying that an edge is an equivalence if and only if it is coCartesian and its image in $\CC(p(x),p(y))$ is an equivalence. Guided by this intuition we will say that a left-degenerate 2-simplex is \emph{coCartesian} if we can produce the dotted arrow below
  \[
  \begin{tikzcd}[ampersand replacement=\&]
  \Lambda^n_0 \arrow[r,"f"] \arrow[d] \& \BB \arrow[d,"p"] \\
  \Delta^{n} \arrow[r] \arrow[ur,dotted] \& \CC
  \end{tikzcd}
  \] 
 provided $\restr{f}{\Delta^{\set{0,1,n}}}=\sigma$. Our definition will be completed once it can be extended to an arbitrary 2-simplex. To this end we then note that, for any 2-simplex $\gamma$ in $\BB$, we can define a 3-simplex $\eta$
	\[
	\begin{tikzcd}
	& \gamma(2) & \\
	& & \\
	\gamma(0)\arrow[uur,"{\gamma_{02}}"]\arrow[dr,"\gamma_{01}"']\arrow[rr,equals] & & \gamma(0)\arrow[dl,"\gamma_{01}"]\arrow[uul,"g"'] \\
	& \gamma(1)\arrow[uuu,"\gamma_{12}",crossing over] & 
	\end{tikzcd}
	\]
	where $d^0(\eta)$ and $d^3(\eta)$ are scaled, $d^1(\eta)=\gamma$, and $d^2(\eta)$ is left-degenerate. This data exhibits and equivalence of composite 2-morphisms $d^3(\eta) \circ \gamma= d^0(\eta) \circ d^2(\eta)$. In particular, since $d^0(\eta), d^3(\eta)$ are thin we see that $\gamma$ and $d^2(\eta)$ represent the same 2-morphism in the mapping $\infty$-category of $\BB$. We will call $d^2(\eta)$ the \emph{left-degeneration} of $\gamma$. This allows us to make the following definition: A 2-simplex is coCartesian if and only if its left-degeneration is coCartesian.

	Once we have established our definition of coCartesian triangles, the definition of Cartesian edges is relatively straightforward. We simply require that lifting problems 
	\[
	\begin{tikzcd}
	\Lambda^n_n\arrow[r,"f"]\arrow[d] & \BB\arrow[d,"p"]\\
	\Delta^n\arrow[r] & \CC 
	\end{tikzcd}
	\]
	have solutions, provided that $f|_{\Delta^{\{0,n-1,n\}}}$ is a coCartesian triangle, and $f|_{\Delta^{\{n-1,n\}}}$ is $p$-Cartesian. 
	
	There are several additional technical conditions that come into play in our definition, but these two form the core idea. As we will discuss below, this intuitive approach becomes very close to the formal definition when the base of our fibration is a fibrant scaled simplicial set ($\infty$-bicategory). We refer the reader to the third section of the paper for a systematical approach of this definition.
	
	\subsection{Decorations and data}
	
	To handle the data of Cartesian morphisms in the theory of Cartesian fibrations of $(\infty,1)$-categories, Lurie introduces a decoration on simplicial sets. In \cite[Ch. 3]{HTT}, he defines a \emph{marked simplicial set} to consist of a simplicial set $X$ and a collection of edges $M_X\subset X_1$ which contains all degenerate edges. In this way, one can ``hardcode'' the Cartesian edges into a simplicial set. 
	
	We will follow a similar approach in our development of the 2-Cartesian model structure. Unfortunately, this entails rather a lot of decoration. A scaled simplicial set $(X,T_X)$ already comes equipped with a collection $T_X$ of `thin' triangles, which are taken to represent invertible 2-morphisms. To this we will add a \emph{second} collection of decorated 2-simplices, $C_X$, which we take to represent the \emph{coCartesian} 2-simplices. Since any invertible 2-morphism should, in particular, be coCartesian, we require $T_X\subset C_X$. We will call the elements of $C_X$ the \emph{lean} triangles.
	
	In addition to all of this, we then add a collection of marked 1-simplices. These we take to represent the Cartesian morphisms. All in all, the data we will have to consider to codify our intuition about 2-Cartesian fibrations consists of a simplicial set and \emph{three} decorations. We will call a simplicial set with these three decorations a \emph{marked biscaled simplicial set} (or \emph{mb simplicial set} for short) and denote the category of such by $\mbsSet$
	
	\subsection{Main results}
	
	As the title and preceding discussion already suggest, the main result of this paper is that there is a model structure on the category of marked biscaled simplicial sets over a scaled simplicial set $S$, whose fibrant objects are precisely the 2-Cartesian fibrations.
	
	\begin{thm*}[\ref{thm:modelstructure}]
		Let $S$ be a scaled simplicial set. Then there exists a left proper combinatorial simplicial model structure on $(\mbsSet )_{/S}$, which is characterized uniquely by the following properties:
		\begin{itemize}
			\item[C)] A morphism $f:X \to Y$ in $(\mbsSet )_{/S}$ is a cofibration if and only if $f$ induces a monomorphism betwee the underlying simplicial sets.
			\item[F)] An object $X \in (\mbsSet )_{/S}$ is fibrant if and only if $X$ is a 2-Cartesian fibration. 
		\end{itemize}
	\end{thm*}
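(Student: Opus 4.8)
The plan is to realize the desired model structure as an instance of the general recognition theorem for combinatorial model categories in which the cofibrations are exactly the monomorphisms, in the style of \cite[Proposition~A.2.6.13]{HTT}. Concretely, one feeds the theorem two pieces of data: a set $\mathbf{C}$ of generating cofibrations and a class $W$ of weak equivalences. For $\mathbf{C}$ I would take the decorated boundary inclusions $\partial\Delta^n \to \Delta^n$, equipped with every admissible combination of thin, lean, and marked structure (there are only finitely many per $n$); since $(\mbsSet)_{/S}$ is presentable and its monomorphisms are generated by a set, the weakly saturated hull of $\mathbf{C}$ is precisely the class of monomorphisms on underlying simplicial sets, which yields property (C). For $W$ I would take the \emph{2-Cartesian equivalences}: those $f\colon X\to Y$ for which $\on{Map}_S(Y,Z)\to \on{Map}_S(X,Z)$ is a homotopy equivalence for every 2-Cartesian fibration $Z\to S$. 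Because being a 2-Cartesian fibration is itself defined by a right lifting property against an explicit collection of maps, this class is well defined prior to the construction of the model structure; and since every object is cofibrant, the same mapping-space description forces uniqueness of the structure once (C) and (F) are imposed.

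Before invoking the recognition theorem I would isolate a set $\mathbf{A}$ of \emph{MB-anodyne} generating trivial cofibrations, encoding the scaled-anodyne fillings, the coCartesian $\Lambda^n_0$-horn fillings carrying the prescribed lean and marked decorations, the Cartesian $\Lambda^n_n$-horn fillings, and the remaining technical conditions in the definition. The design constraint on $\mathbf{A}$ is that, for $p\colon X\to S$, having the right lifting property against $\mathbf{A}$ is equivalent to $p$ being a 2-Cartesian fibration; this is a bookkeeping translation of the definition and should be routine once that definition is in place. This will supply property (F), provided one also shows that 2-Cartesian fibrations lift against \emph{all} trivial cofibrations and not merely the generators of $\mathbf{A}$.

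The substance of the proof is the verification of the recognition-theorem hypotheses. First, $W$ is perfect: closure under $2$-out-of-$3$ and retracts is formal, while accessibility and stability under filtered colimits follow because the fibrant test objects are cut out by a right lifting property against a set, hence form an accessible subcategory, so that mapping-space equivalences into them assemble into an accessible, filtered-colimit-closed class. Second, every trivial fibration (a map with the right lifting property against $\mathbf{C}$) lies in $W$, which one checks directly from the fact that such a map is a levelwise trivial fibration on the underlying decorated data. Third, the class of trivial cofibrations must be stable under pushout and transfinite composition and must coincide with the saturation of $\mathbf{A}$.

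The hard part will be this third point, and it reduces to a single combinatorial statement: the pushout-product of an MB-anodyne map with a cofibration is again MB-anodyne. This is the categorified analogue of Lurie's stability results for marked and scaled anodyne maps, and I expect it to demand the explicit and rather delicate filtration arguments on products of simplices --- organizing the nondegenerate simplices of $\Delta^m\times\Delta^n$ by shuffles and attaching the generators of $\mathbf{A}$ one cell at a time, while carefully tracking the thin, lean, and marked decorations. Granting this stability lemma, the simplicial enrichment (obtained by tensoring an mb simplicial set over $S$ against a maximally decorated $\Delta^n$) automatically satisfies the pushout-product axiom, the trivial cofibrations are stable, 2-Cartesian fibrations acquire the right lifting property against all trivial cofibrations (completing property (F)), and left properness follows from cofibrations being monomorphisms together with the mapping-space description of $W$. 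The recognition theorem then produces the unique left proper combinatorial simplicial model structure with the asserted properties.
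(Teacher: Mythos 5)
Your proposal follows essentially the same route as the paper: weak equivalences are defined via mapping spaces into 2-Cartesian fibrations, existence is deduced from \cite[Prop.~A.2.6.13]{HTT} after checking perfectness, stability of weak equivalences under pushout along cofibrations, and that trivial fibrations are weak equivalences, and the combinatorial heart is the pushout-product stability of the anodyne generators against cofibrations (\autoref{prop:PP} in the paper), which also yields the simplicial enrichment. One caveat: you should not attempt to show that the trivial cofibrations coincide with the saturation of your set $\mathbf{A}$ --- this is neither needed nor expected to hold (compare inner anodyne maps versus trivial cofibrations in the Joyal model structure); property (F) instead follows from the weaker fact you already flag, namely that a 2-Cartesian fibration lifts against every trivial cofibration, which one deduces from the pushout-product axiom together with the mapping-space characterization of weak equivalences.
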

	
	The proof of this theorem is, by necessity, quite technical. The main ingredients --- verification of a pushout-product axiom for an appropriate collection of anodyne maps and a characterization of weak equivalences by their properties on fibres --- dominate a large part of the paper. 
	
	From this model structure, we immediately obtain a model structure over the terminal scaled simplicial set --- equivalently a model structure on $\mbsSet$. As one would hope, this turns out to provide a new model for $\infty$-bicategories. This model is quite similar to the model of \cite{Verity}, which was shown to be equivalent to Lurie's model on $\Set_\Delta^{\mathbf{sc}}$ in \cite{GHL_equiv}. In our new model, however, there is a significant amount of redundant data: a second scaling which, for fibrant objects, agrees with the first scaling. 
	
	\begin{thm*}[\ref{thm:equivwithSC}]
		There is a a Quillen equivalence
		\[
		\begin{tikzcd}[ampersand replacement=\&]
		L:\on{Set}_{\Delta}^{\on{sc}} \arrow[r,shift left=0.8] \& \arrow[l,shift left=0.8] \mbsSet:U
		\end{tikzcd}
		\]
		between the model structure on mb simplicial sets over $\Delta^0$ and the model structure on scaled simplicial sets of \cite{LurieGoodwillie}.
	\end{thm*}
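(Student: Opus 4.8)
The plan is to model the argument on Lurie's comparison between the Joyal model structure and the Cartesian model structure on marked simplicial sets over a point (see \cite[Prop.~3.1.5.3]{HTT}). First I would pin down the adjunction concretely: the right adjoint $U$ sends a marked biscaled simplicial set $(X,M_X,T_X,C_X)$ to the scaled simplicial set $(X,T_X)$, forgetting both the marking and the lean scaling, while the left adjoint $L$ equips a scaled simplicial set $(X,T_X)$ with the minimal admissible extra data, namely only the degenerate edges marked and $C_X = T_X$. The adjunction $L\dashv U$ is then immediate from the structural inclusions $T_X\subseteq C_X$ and $\mathrm{degen}\subseteq M_X$: every condition on a map out of $L(X,T_X)$ beyond $f(T_X)\subseteq T_Y$ is automatically satisfied, so maps out of $L(X,T_X)$ biject with maps of scaled simplicial sets $(X,T_X)\to U(\,\cdot\,)$.

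Next I would check that $(L,U)$ is a Quillen adjunction. That $L$ preserves cofibrations is clear, since it is the identity on underlying simplicial sets and cofibrations on both sides are exactly the monomorphisms. The substantive point is that $L$ carries trivial cofibrations to trivial cofibrations; I would verify this by running through Lurie's generating scaled anodyne maps and checking that the image under $L$ of each generator lies in the saturated class of mb-anodyne maps used to build the model structure of \Cref{thm:modelstructure}. Dually (and needed later) I would check that $U$ carries mb-anodyne maps to scaled weak equivalences. This comparison of the two saturated classes is where I expect the bulk of the technical work to lie.

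With the Quillen adjunction in hand, the key geometric input is the characterization of fibrant objects over the point furnished by our analysis of 2-Cartesian fibrations over an $\infty$-bicategory: a fibrant object of $(\mbsSet)_{/\Delta^0}$ is precisely an $\infty$-bicategory $(X,T_X)$ together with $C_X=T_X$ and with $M_X$ equal to the collection of equivalences in $X$. In particular $U$ restricts to a functor from fibrant mb simplicial sets to fibrant scaled simplicial sets which is essentially surjective (every $\infty$-bicategory carries this canonical decoration) and which, by the fibrewise characterization of weak equivalences specialized to the single fibre over $\Delta^0$, reflects weak equivalences between fibrant objects.

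Finally I would invoke the standard criterion for a Quillen equivalence: since every object is cofibrant, it remains only to show that the derived unit is a weak equivalence. Given a scaled simplicial set $Y$, I would choose an mb-fibrant replacement $L(Y)\hookrightarrow Z$; as this is an mb trivial cofibration, its image under $U$ is a scaled weak equivalence $Y\to U(Z)$, and $U(Z)$ is a fibrant scaled simplicial set by the characterization above. Hence $U(Z)$ is a fibrant replacement of $Y$ and the derived unit $Y\to U(Z)$ is a scaled weak equivalence. Combined with the reflection of weak equivalences between fibrant objects, this yields the Quillen equivalence; as a sanity check one may compare against the equivalence of \cite{GHL_equiv} between Lurie's model and that of \cite{Verity}, to which our model reduces once the redundant lean scaling is discarded. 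The main obstacle throughout is the comparison of the scaled-anodyne and mb-anodyne saturated classes, which underlies both halves of the Quillen adjunction and the derived-unit computation.
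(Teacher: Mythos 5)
Your overall architecture (establish a Quillen adjunction, then compute the derived unit and use reflection of weak equivalences between fibrant objects) has the right shape, and your identification of the adjunction and of the fibrant objects over the point is correct. But two of the load-bearing justifications do not work. First, to show that $L$ preserves trivial cofibrations you propose to check that $L$ sends the generating scaled anodyne maps into the saturated class of \bS-anodyne maps. This is insufficient: the scaled anodyne maps are a convenient subclass of the trivial cofibrations of $\on{Set}_\Delta^{\on{sc}}$, but they do not generate all of them, so preservation on those generators does not yield preservation of trivial cofibrations. The paper instead uses the natural identification $\on{Fun}^{\mathbf{mb}}(L(B),\mathbb{D})\cong\on{Fun}^{\mathbf{sc}}(B,U(\mathbb{D}))$ for $\mathbb{D}$ fibrant; since weak equivalences on both sides are detected by mapping into fibrant objects, and every $\infty$-bicategory arises as $U(\mathbb{D})$ for a fibrant $\mathbb{D}$ (equivalences marked, $T=C$), this shows in one stroke that $L$ preserves \emph{and reflects} all weak equivalences.

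Second, the statement that ``$U$ carries mb-anodyne maps to scaled weak equivalences'' is false, and your derived-unit computation rests entirely on it. For example, $U$ applied to the generator \ref{mb:2CartliftsExist} is the vertex inclusion $\Delta^0\to\Delta^1$ (the marking is forgotten by $U$), and $U$ applied to \ref{mb:2Cartesianmorphs} is an outer horn inclusion with flat thin scaling; neither is a bicategorical equivalence. Consequently you cannot conclude that the image under $U$ of the mb trivial cofibration $L(Y)\to Z$ is a scaled weak equivalence by this route. The repair is the paper's final observation: since $U\circ L=\on{id}$, it suffices to know that for fibrant $Z$ the counit $LU(Z)=(Z,\flat,T_Z)\to(Z,E_Z,T_Z)$ is a weak equivalence, which holds because it is a pushout of a map of type \ref{mb:equivalences}; combining this with two-out-of-three and the reflection property of $L$ established above shows that $Y\to U(Z)$ is a scaled weak equivalence for any fibrant replacement $L(Y)\to Z$, completing the Quillen equivalence.
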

	
	Once these core results are established, we explore the cases where the base is a fibrant scaled simplicial set, i.e. an $\infty$-bicategory. In this setting, it is possible to give a much more intuitive characterization of the fibrant objects. 
	
	\begin{thm*}[\ref{thm:characterization2Fib}]
		Let $\BB$ be an $\infty$-bicategory and let $p:(X,M_X,T_X \subset C_X) \to \BB$ be an element of $(\mbsSet)_{/\BB }$. Then $p:X\to \BB$ is fibrant if and only if 
		\begin{enumerate}
			\item $p$ has the right lifting property against the generating scaled anodyne maps. 
			\item The collection $C_X$ lean of triangles in $X$ contains all left-degenerate coCartesian triangles.
			\item The collection $C_X$ is stable under composition along 1-morphisms.
			\item The collection $E_X$ consists of precisely the p-cartesian edges of $X$. 
			\item Every morphism in $\BB$ admits a $p$-Cartesian lift.
      \item Every 2-morphism in $\BB$ admits a coCartesian lift.
		\end{enumerate}
	\end{thm*}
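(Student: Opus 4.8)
The plan is to compare the abstract definition of fibrancy---right lifting property against the generating marked--biscaled anodyne maps (the \bS-anodyne morphisms introduced in Section~3)---with the concrete list (1)--(6), using the hypothesis that $\BB$ is an $\infty$-bicategory to translate freely between anodyne generators and honest (co)Cartesian lifting data in the base.

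First I would dispatch the forward implication. Assuming $p$ is fibrant, I would isolate, for each of the six conditions, the subfamily of \bS-anodyne generators whose right lifting property yields that condition. Condition (1) is immediate, since every generating scaled anodyne map, equipped with its minimal marking and scalings, occurs among the \bS-anodyne generators. Conditions (2), (3), (5) and (6) follow by lifting against the generators that respectively adjoin left-degenerate coCartesian triangles to $C_X$, witness the closure of $C_X$ under composition along $1$-morphisms, and produce Cartesian $1$-lifts and coCartesian $2$-lifts. The genuinely two-sided statement is (4), that the marking $E_X$ is \emph{exactly} the class of $p$-Cartesian edges: that every marked edge is $p$-Cartesian is read off from the Cartesian generators, while the reverse containment requires the generators that force a $p$-Cartesian edge into $E_X$, together with the essential uniqueness of Cartesian lifts.

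The substance lies in the reverse implication. Assuming (1)--(6), I would verify the right lifting property against every \bS-anodyne generator, organizing them into families. The scaled-anodyne family is covered by (1). For the families enforcing Cartesian behaviour of $1$-morphisms, existence of a single lift is supplied by (5), and the remaining higher lifting problems are reduced---by the standard inductive horn-filling argument---to the characterization of $p$-Cartesian edges furnished by (4); here the fibrancy of $\BB$ is invoked to fill the relevant horn downstairs. For the families controlling coCartesian $2$-morphisms, (6) gives existence, and I would propagate the coCartesian property to arbitrary triangles through the \emph{left-degeneration} construction of the introduction, using (2) to place left-degenerate coCartesian triangles in $C_X$ and (3) to transport leanness across composites.

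I expect the main obstacle to be precisely this propagation in the reverse direction: deducing the full family of high-dimensional lifting properties from the merely local existence statements (5) and (6). The defining \bS-anodyne generators encode uniqueness and compatibility of lifts in all simplicial degrees, whereas (5) and (6) assert only existence in degrees one and two. Bridging the gap calls for an induction on dimension in which each required filler is assembled from a chosen (co)Cartesian lift and a solution to a lower-dimensional problem, with the fibrancy of $\BB$ supplying the missing base simplex and conditions (2)--(4) certifying that every triangle so produced carries exactly the decorations demanded by the next generator. Arranging this induction so that the decorations created at one stage are precisely those required at the next---without circularity---is the technical heart of the argument.
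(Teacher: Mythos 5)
Your outline has the right top-level shape---both directions reduce to checking the right lifting property generator by generator, and the forward implication is indeed essentially immediate---but in the reverse direction you have named the hard part without supplying the idea that resolves it, and for several generators the ``standard inductive horn-filling argument'' you invoke will not close the gap. The paper's mechanism is not a dimension induction on lifting problems but a \emph{translation into mapping $\infty$-categories}: \autoref{prop:characterizationcoCart} (resting on the nontrivial extension \autoref{lem:n+1face}) identifies a left-degenerate coCartesian triangle with a coCartesian edge of the mapping $\infty$-category $X(a,b)$ over $S(p(a),p(b))$, and \autoref{prop:quasi} shows that ``strongly $p$-Cartesian'' (lifting against $\Lambda^n_n$-horns whose last triangle is merely lean) coincides with ``$p$-Cartesian'' (last triangle thin). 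These two comparisons are what allow the remaining generators to be verified by importing established $(\infty,1)$-categorical facts and the results of \cite{GHL_LaxLim}: generator \ref{mb:composeacrossthin} needs the two-out-of-three property \autoref{cor:cart2outof3}; generator \ref{mb:equivalences} needs the homotopy-pullback characterization \autoref{cor:ghlmapping} to see that fibrewise equivalences are $p$-Cartesian; and generator \ref{mb:coCart2of3} needs \autoref{lem:fibrants5}, whose proof runs through the homotopy pullback square of mapping spaces induced by postcomposition with a Cartesian edge (after a $\Lambda^4_2$-filling reduction to the left-degenerate case), not through horn-filling in $X$ directly.

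Concretely, two steps in your plan would fail as stated. First, your conditions (5) and (6) give lifts in the lowest degree, but the generators \ref{mb:2Cartesianmorphs} demand lifts of $\Lambda^n_n$-horns whose restriction to $\Delta^{\{0,n-1,n\}}$ is only \emph{lean}, not thin; bridging that discrepancy is exactly \autoref{prop:quasi}, and it is not a formal induction---it requires already knowing the closure property of \autoref{lem:fibrants5}, which in turn uses the mapping-space pullback. Second, nothing in your conditions (1)--(6) directly addresses generator \ref{mb:equivalences}; deducing it from (4) requires proving that every equivalence is $p$-Cartesian, which is where \autoref{cor:ghlmapping} enters. Until you either reprove these comparison results or cite them, the ``technical heart'' you correctly identify remains open.
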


	In addition to being a useful characterization of 2-Cartesian fibrations, this serves as a confirmation that our intuitive understanding cribbed from \cite{Buckley} was correct. Condition (1) is a formality in the strict 2-categorical case, and conditions (2)-(5) closely parallel those conditions which we extracted from \cite{Buckley}. We can, in fact, extract a further corollary from this characterization: 
	
	\begin{cor*}
		Let $F:\BB\to \CC$ be a 2-fibration in the sense of \cite{Buckley}, dualized to require coCartesian 2-morphisms rather than Cartesian 2-morphisms. Then the induced map 
		\[
		\Nsc(F): (\Nsc(\BB),M_{\BB},T_{\BB}\subset C_{\BB})\to \Nsc(\CC)
		\]
		is a 2-Cartesian fibration, where $M_{\BB}$ is the set of Cartesian edges, and $C_{\BB}$ is the set of coCartesian triangles.
	\end{cor*}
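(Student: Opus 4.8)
The plan is to verify, one by one, the six conditions of \cref{thm:characterization2Fib} for the marked biscaled simplicial set $(\Nsc(\BB),M_\BB,T_\BB\subset C_\BB)$ over $\Nsc(\CC)$. For this to even be meaningful we must first be inside the hypotheses of that characterization, so the initial step is to record that $\Nsc(\CC)$ is an $\infty$-bicategory. A $2$-fibration in the (dualized) sense of \cite{Buckley} presupposes that $\CC$ is locally fibrant as a $\Set_\Delta^+$-enriched category --- its mapping objects are $\infty$-categories --- and the scaled coherent nerve of a locally fibrant $\Set_\Delta^+$-enriched category is a fibrant scaled simplicial set. With this in hand it suffices to check conditions (1)--(6).

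Condition (1), that $\Nsc(F)$ has the right lifting property against the generating scaled anodyne maps, I would obtain from the fact that $F$ is a local fibration: each $F_{a,b}\colon\BB(a,b)\to\CC(a,b)$ is a coCartesian fibration, hence in particular a fibration of marked simplicial sets. Passing across the adjunction between $\Nsc$ and the scaled rigidification functor, a lifting problem for $\Nsc(F)$ against a generating scaled anodyne map transposes to a lifting problem for $F$ against the rigidification of that map, which is a local trivial cofibration of $\Set_\Delta^+$-enriched categories; the local fibrancy of $F$ then supplies the solution. This step is essentially formal once the behaviour of rigidification on the generating scaled anodyne maps is invoked.

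The remaining conditions encode, essentially verbatim, the content of Buckley's definition. Conditions (5) and (6) are direct translations of the two lifting axioms: the Cartesian lift $\tilde g\colon\tilde c\to b$ provided for each $g\colon c\to F(b)$ yields the $\Nsc(F)$-Cartesian lift required in (5), and the coCartesian fibration structure on each $F_{a,b}$ supplies the coCartesian lift of every $2$-morphism required in (6). Conditions (2) and (3) --- that $C_\BB$ contains all left-degenerate coCartesian triangles and is stable under composition along $1$-morphisms --- follow from the definition of $C_\BB$ as the collection of coCartesian triangles together with Buckley's requirement that horizontal composites of coCartesian $2$-morphisms remain coCartesian, the vertical composites being automatically coCartesian. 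Condition (4), that $M_\BB$ is exactly the class of $\Nsc(F)$-Cartesian edges, is the compatibility statement that must be pinned down most carefully.

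\textbf{The main obstacle.} The crux of the argument is the dictionary between the strict, enriched-categorical notions of Cartesian $1$-morphism and coCartesian $2$-morphism in $\BB$ and the simplicial lifting definitions used throughout this paper, namely those phrased via the horns $\Lambda^n_0$ and $\Lambda^n_n$ together with the scaled and lean markings. Concretely, I must show that an edge of $\Nsc(\BB)$ is $\Nsc(F)$-Cartesian in the sense of this paper precisely when the corresponding $1$-morphism of $\BB$ is Cartesian in Buckley's sense, and analogously that the left-degeneration of a triangle is coCartesian precisely when the underlying $2$-morphism is coCartesian in the mapping $\infty$-category. Establishing these two equivalences means unwinding how the scaled coherent nerve converts the strict universal properties of Cartesian/coCartesian cells into the horn-filling conditions, and this translation --- rather than any single axiom in isolation --- is where the real work lies. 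Once it is in place, conditions (1)--(6) hold and \cref{thm:characterization2Fib} delivers the conclusion.
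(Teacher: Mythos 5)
Your overall strategy---reduce everything to the characterization of fibrant objects over a fibrant base---is the same one the paper uses, and you have correctly located where the difficulty sits. But locating the difficulty is not the same as resolving it, and your ``main obstacle'' paragraph is precisely the part of the argument that carries all the mathematical content. The paper does not verify the six conditions of \autoref{thm:characterization2Fib} directly against Buckley's axioms; instead it first builds the dictionary you defer: \autoref{prop:characterizationcoCart} shows that a left-degenerate triangle is coCartesian (in the horn-lifting sense) if and only if it is a coCartesian edge of the mapping $\infty$-category $X(a,b)$, \autoref{prop:quasi} and \autoref{cor:ghlmapping} show that $p$-Cartesian edges are detected by homotopy pullbacks of mapping spaces, and \autoref{prop:enrichefunctoriallyfibred} translates ``locally fibred with a functorial family of coCartesian triangles'' into the enriched-categorical conditions (local coCartesian fibrations whose composition law preserves coCartesian edges). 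Only then does \autoref{thm:enriched} deliver the statement for fibrations of $\iCat_{\infty}$-categories, and the corollary follows by applying the hom-wise nerve $\on{N}_2$ to a strict $2$-fibration. Each of these steps is a genuine combinatorial argument (the proof of \autoref{prop:characterizationcoCart} occupies a page of horn-filling, and the functoriality translation in \autoref{prop:enrichefunctoriallyfibred} requires the $\OO^3$ argument with the coCartesian model structure); your proposal asserts their conclusions without proof.

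Two of your specific claims also need more care than you give them. For condition (4) you must show that $M_{\BB}$, defined via Buckley's strict universal property of Cartesian $1$-cells, coincides with the class of edges satisfying the $\Lambda^n_n$-lifting condition of \ref{mb:2Cartesianmorphs} for \emph{all} $n$; this is exactly the content of \autoref{prop:quasi} together with \autoref{cor:ghlmapping}, and it is not a formal consequence of the adjunction with rigidification. For conditions (2) and (3), ``stability under composition along $1$-morphisms'' in the simplicial sense is the functorial-family condition phrased via $3$-simplices with a thin face, and deducing it from Buckley's horizontal-composition axiom requires the reduction (carried out in \autoref{prop:enrichefunctoriallyfibred}) to the case where one of the two composed cells is degenerate, plus \autoref{lem:fibrants4} and \autoref{lem:fibrants5} for the remaining closure properties. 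As written, your argument is a correct outline of the paper's proof with its central lemmas left as declared gaps.
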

	
	\subsection{Structure of this paper}
	
	In \autoref{sec:prelim}, we briefly introduce some notational conventions and recapitulate the generating scaled anodyne morphisms of \cite{LurieGoodwillie}. The next section, \autoref{sec:Model} is entirely given over to the proof that the desired 2-Cartesian model structure exists. This proof has two lengthy technical components: in \autoref{subsec:Anodyne}, we define our chosen class of anodyne morphisms --- the $\bS$-anodyne morphisms --- and prove that they satisfy a pushout-product axiom; in \autoref{subsec:MS}, we provide a fibrewise characterization of the putative equivalences, and then deduce the existence of the model structure. The final section, \autoref{sec:fibbase}, is devoted to providing a clear characterization of the fibrant objects over an $\infty$-bicategory. In particular, the characterizations provided in this section make clear the connection between our 2-Cartesian fibrations, the $\Set_\Delta^+$-enriched fibrations of \cite{GHL_LaxLim}, and the 2-fibrations of \cite{Buckley}.
	
	\subsection*{Acknowledgements}
	F.A.G. would like to acknowledge the support of the VolkswagenStiftung through the Lichtenberg Professorship Programme while he conducted this research. W.H.S wishes to acknowledge the support of the NSF Research Training Group at the
	University of Virginia (grant number DMS-1839968) during the preparation of this work.

	\section{Preliminaries}\label{sec:prelim}
	
	Recapitulating even the basics of the theory of quasi-categories, $\infty$-bicategories, and the various types of fibrations between them would take more space than the rest of the paper. Consequently, we here confine ourselves to fixing some notational conventions, and establishing definitions for later reference.
	
	We will denote the simplex category by $\Delta$, the category of simplicial sets by $\Set_\Delta$, the category of marked simplicial sets by $\Set_\Delta^+$, and the category of scaled simplicial sets by $\Set_\Delta^{\mathbf{sc}}$. Where possible, we will follow the notational conventions established in \cite{HTT} and expanded in \cite{LurieGoodwillie} and \cite{HA}. In referring to the works of Gagna, Harpaz, and Lanari \cite{GHL Gray,GHL_LaxLim,GHL_equiv}, we will endeavor to either follow their notation, or explain where our conventions differ.  
	
	\begin{definition}\label{def:scanodyne}
		The set of \emph{generating scaled anodyne maps} \(\sS\) is the set of maps of scaled simplicial sets consisting of:
		\begin{enumerate}
			\item[(i)]\label{item:anodyne-inner} the inner horns inclusions
			\[
			\bigl(\Lambda^n_i,\{\Delta^{\{i-1,i,i+1\}}\}\bigr)\rightarrow \bigl(\Delta^n,\{\Delta^{\{i-1,i,i+1\}}\}\bigr)
			\quad , \quad n \geq 2 \quad , \quad 0 < i < n ;
			\]
			\item[(ii)]\label{i:saturation} the map 
			\[
			(\Delta^4,T)\rightarrow (\Delta^4,T\cup \{\Delta^{\{0,3,4\}}, \ \Delta^{\{0,1,4\}}\}),
			\]
			where we define
			\[
			T\overset{\text{def}}{=}\{\Delta^{\{0,2,4\}}, \ \Delta^{\{ 1,2,3\}}, \ \Delta^{\{0,1,3\}}, \ \Delta^{\{1,3,4\}}, \ \Delta^{\{0,1,2\}}\};
			\]
			\item[(iii)]\label{item:anodyne_outer} the set of maps
			\[
			\Bigl(\Lambda^n_0\coprod_{\Delta^{\{0,1\}}}\Delta^0,\{\Delta^{\{0,1,n\}}\}\Bigr)\rightarrow \Bigl(\Delta^n\coprod_{\Delta^{\{0,1\}}}\Delta^0,\{\Delta^{\{0,1,n\}}\}\Bigr)
			\quad , \quad n\geq 3.
			\]
		\end{enumerate}
		A general map of scaled simplicial set is said to be \emph{scaled anodyne} if it belongs to the weakly saturated closure of \(\sS\).
	\end{definition}
	
	\begin{definition}
		We say that a map of scaled simplicial sets $p:X \to S$ is a weak \sS-fibration if it has the right lifting property with respect to the class of scaled anodyne maps.
	\end{definition}

	In general, we will denote fibrant objects in $\Set_\Delta^{\mathbf{sc}}$ using blackboard bold characters, e.g. $\BB$. We will use undecorated roman majescules, e.g. $X$, to denote objects of any category, adding explicit decorations as necessary for clarity.

  \begin{definition}\label{defn:mappingspace}
  Consider the cosimplicial object 
  \[
  \func*{
    Q:\Delta \to \Set_\Delta^{\on{sc}};
    {[n]} \mapsto \Delta^0 \coprod_{\Delta^n} (\Delta^n\star \Delta^0),
  } 
  \]
  equipped with the minimal scaling. Given an $\infty$-bicategory $X\in \Set_\Delta^{\on{sc}}$, for any $a,b\in X$, we define a simplicial set $X(a,b)$ whose $n$-simplices are maps $Q_n\to X$ which send the first vertex to $a$ and the second to $b$. It was shown in \cite{GHL_equiv} that $X(a,b)$ is a model for the mapping $\infty$-category from $a$ to $b$ in $X$.   
\end{definition}
	
	\section{The model structure}\label{sec:Model}
	\subsection{Marked biscaled simplicial sets and \bS-anodyne morphisms.}\label{subsec:Anodyne}
	\begin{definition}
		A \emph{marked biscaled} simplicial set (mb simplicial set) is given by the following data
		\begin{itemize}
			\item A simplicial set $X$.
			\item A collection of edges  $E_X \in X_1$ containing all degenerate edges.
			\item A collection of triangles $T_X \in X_2$ containing all degenerate triangles. We will refer to the elements of this collection as \emph{thin triangles}.
			\item A collection of triangles $C_X \in X_2$ such that $T_X \subseteq C_X$. We will refer to the elements of this collection as \emph{lean triangles}.
		\end{itemize}
		We will denote such objects as triples $(X,E_X, T_X \subseteq C_X)$. A map $(X,E_X, T_X \subseteq C_X) \to (Y,E_Y,T_Y \subseteq C_Y)$ is given by a map of simplicial sets $f:X \to Y$ compatible with the collections of edges and triangles above. We denote by $\mbsSet$ the category of mb simplicial sets.
	\end{definition}
	
	\begin{notation}
		Let $(X,E_X, T_X \subseteq C_X)$ be a mb simplicial set. Suppose that the collection $E_X$ consist only of degenerate edges. Then we fix the notation $(X,E_X, T_X \subseteq C_X)=(X,\flat,T_X \subseteq E_X)$ and do similarly for the collection $T_X$. If $C_X$ consists only of degenerate triangles we fix the notation $(X,E_X, T_X \subseteq C_X)=(X,E_X, \flat)$. In an analogous fashion we wil use the symbol “$\sharp$“ to denote a collection containing all edges (resp. all triangles). Finally suppose that $T_X=C_X$ then we will employ the notation $(X,E_X,T_X)$.
	\end{notation}
	
	\begin{remark}
		We will often abuse notation when defining the collections $E_X$ (resp. $T_X$, resp. $C_X$) and just specified its non-degenerate edges (resp. triangles).
	\end{remark}
	
	\begin{remark}\label{rem:forgetfuladj}
		Observe that we have a functor, $\func{L: \on{Set}_{\Delta}^{\on{sc}} \to \mbsSet}$ sending an scaled simplicial set $(X,T_X)$ to  $(X,\flat, T_X)$ which is left adjoint to the forgetful functor $U$ sending $(X,E_X,T_X \subseteq C_X)$ to $(X,T_X)$. 
	\end{remark}
	
	\begin{definition}\label{def:mbsanodyne}
		The set of \emph{generating mb anodyne maps} \(\bS\) is the set of maps of mb simplicial sets consisting of:
		\begin{enumerate}
			\myitem{(A1)}\label{mb:innerhorn} The inner horn inclusions 
			\[
			\bigl(\Lambda^n_i,\flat,\{\Delta^{\{i-1,i,i+1\}}\}\bigr)\rightarrow \bigl(\Delta^n,\flat,\{\Delta^{\{i-1,i,i+1\}}\}\bigr)
			\quad , \quad n \geq 2 \quad , \quad 0 < i < n ;
			\]
			\myitem{(A2)}\label{mb:wonky4} The map 
			\[
			(\Delta^4,\flat,T)\rightarrow (\Delta^4,\flat,T\cup \{\Delta^{\{0,3,4\}}, \ \Delta^{\{0,1,4\}}\}),
			\]
			where we define
			\[
			T\overset{\text{def}}{=}\{\Delta^{\{0,2,4\}}, \ \Delta^{\{ 1,2,3\}}, \ \Delta^{\{0,1,3\}}, \ \Delta^{\{1,3,4\}}, \ \Delta^{\{0,1,2\}}\};
			\]
			\myitem{(A3)}\label{mb:leftdeglefthorn} The set of maps
			\[
			\Bigl(\Lambda^n_0\coprod_{\Delta^{\{0,1\}}}\Delta^0,\flat,\flat \subset\{\Delta^{\{0,1,n\}}\}\Bigr)\rightarrow \Bigl(\Delta^n\coprod_{\Delta^{\{0,1\}}}\Delta^0,\flat,\flat \subset\{\Delta^{\{0,1,n\}}\}\Bigr)
			\quad , \quad n\geq 2.
			\]
			These maps force left-degenerate lean-scaled triangles to represent coCartesian edges of the mapping category.
			\myitem{(A4)}\label{mb:2Cartesianmorphs} The set of maps
			\[
			\Bigl(\Lambda^n_n,\{\Delta^{\{n-1,n\}}\},\flat \subset \{ \Delta^{\{0,n-1,n\}} \}\Bigr) \to \Bigl(\Delta^n,\{\Delta^{\{n-1,n\}}\},\flat \subset \{ \Delta^{\{0,n-1,n\}} \}\Bigr) \quad , \quad n \geq 2.
			\]
			This forces the marked morphisms to be $p$-Cartesian with respect to the given thin and lean triangles. 
			\myitem{(A5)}\label{mb:2CartliftsExist} The inclusion of the terminal vertex
			\[
			\Bigl(\Delta^{0},\sharp,\sharp \Bigr) \rightarrow \Bigl(\Delta^1,\sharp,\sharp \Bigr).
			\]
			This requires $p$-Cartesian lifts of morphisms in the base to exist.
			\myitem{(S1)}\label{mb:composeacrossthin} The map
			\[
			\Bigl(\Delta^2,\{\Delta^{\{0,1\}}, \Delta^{\{1,2\}}\},\sharp \Bigr) \rightarrow \Bigl(\Delta^2,\sharp,\sharp \Bigr),
			\]
			requiring that $p$-Cartesian morphisms compose across thin triangles.
			\myitem{(S2)}\label{mb:coCartoverThin} The map
			\[
			\Bigl(\Delta^2,\flat,\flat \subset \sharp \Bigr) \rightarrow \Bigl( \Delta^2,\flat,\sharp\Bigr),
			\]
			which requires that lean triangles over thin triangles are, themselves, thin.
			\myitem{(S3)}\label{mb:innersaturation} The map
			\[
			\Bigl(\Delta^3,\flat,\{\Delta^{\{i-1,i,i+1\}}\}\subset U_i\Bigr) \rightarrow \Bigl(\Delta^3,\flat, \{\Delta^{\{i-1,i,i+1\}}\}\subset \sharp \Bigr) \quad, \quad 0<i<3
			\]
			where $U_i$ is the collection of all triangles except $i$-th face. This and the next two generators serve to establish composability and limited 2-out-of-3 properties for lean triangles.
			\myitem{(S4)}\label{mb:dualcocart2of3} The map
			\[
			\Bigl(\Delta^3 \coprod_{\Delta^{\{0,1\}}}\Delta^0,\flat,\flat \subset U_0\Bigr) \rightarrow \Bigl(\Delta^3 \coprod_{\Delta^{\{0,1\}}}\Delta^0,\flat, \flat \subset \sharp \Bigr) 
			\]
			where $U_0$ is the collection of all triangles except the $0$-th face.
			\myitem{(S5)}\label{mb:coCart2of3} The map
			\[
			\Bigl(\Delta^3,\{\Delta^{\{2,3\}}\},\flat \subset U_3\Bigr) \rightarrow \Bigl(\Delta^3,\{\Delta^{\{2,3\}}\}, \flat \subset \sharp \Bigr) 
			\]
			where $U_3$ is the collections of all triangles except the $3$-rd face.
			\myitem{(E)}\label{mb:equivalences} For every Kan complex $K$, the map
			\[
			\Bigl( K,\flat,\sharp  \Bigr) \rightarrow \Bigl(K,\sharp, \sharp\Bigr).
			\]
			Which requires that every equivalence is a marked morphism.
		\end{enumerate}
		A map of mb simplicial sets is said to be \bS-anodyne if it belongs to the weakly saturated closure of \bS.
	\end{definition}
	
	\begin{definition}
		Let $f:(X,E_X,T_X \subseteq C_X) \to (Y,E_Y,T_Y \subseteq C_Y)$ be a map of mb simplicial sets. We say that $f$ is a \bS-fibration if it has the right lifting property against the class of \bS-anodyne morphisms.
	\end{definition}
	
	\begin{lemma}\label{lem:overthepoint}
		Let $f:(X,E_X,T_X \subseteq C_X) \to (Y,E_Y,T_Y \subseteq C_Y)$ be a \bS-fibration and denote by $X_y$ the fibre of $f$ over $y \in Y$. Then $X_y$ is an $\infty$-bicategory with precisely the equivalences marked.
	\end{lemma}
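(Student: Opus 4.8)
The plan is to reduce to the case of the point and then read off what $\bS$-fibrancy says about the fibre. First I would give $X_y$ the mb structure inherited from $X$ — an edge or triangle is marked, thin, or lean in $X_y$ exactly when it is so in $X$ — and observe that $X_y \to \Delta^0$ is again a $\bS$-fibration. Indeed, for a generating $\bS$-anodyne map $i\colon A\to B$ and a map $A\to X_y$, I would compose with $X_y\hookrightarrow X$ and form the commutative square whose lower edge $B\to Y$ is constant at $y$; this constant map is automatically a morphism of mb simplicial sets, since every simplex of $B$ lands on a degenerate — hence marked, thin and lean — simplex of $Y$. As $f$ is a $\bS$-fibration the square admits a lift $B\to X$, which is constant over $y$ and therefore factors through $X_y$. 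It thus suffices to treat $Y=\Delta^0$ and to show that a fibrant $X_y\in\mbsSet$ is an $\infty$-bicategory whose marked edges are precisely its equivalences.

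Next I would show that the two scalings on $X_y$ agree. Every triangle of $X_y$ lies over a degenerate (hence thin) triangle of $Y$, so the right lifting property against \ref{mb:coCartoverThin} forces each lean triangle to be thin; with $T_{X_y}\subseteq C_{X_y}$ this yields $C_{X_y}=T_{X_y}$. Granting this, I would check that $(X_y,T_{X_y})$ has the right lifting property against the three families of generating scaled anodyne maps of Definition~\ref{def:scanodyne}. The inner horns and the exotic $\Delta^4$-map are covered verbatim by \ref{mb:innerhorn} and \ref{mb:wonky4} after forgetting the redundant $\flat$-marking; the outer family is covered by \ref{mb:leftdeglefthorn}, since a scaled lifting problem of that type presents its distinguished triangle $\Delta^{\{0,1,n\}}$ as thin — hence lean — in $X_y$, so it becomes a lifting problem against \ref{mb:leftdeglefthorn} whose solution is thin because lean $=$ thin. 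This establishes that $X_y$ is an $\infty$-bicategory.

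It remains to identify $E_{X_y}$ with the equivalences of $X_y$. For the containment of equivalences in marked edges I would use the characterization (see \cite{GHL_equiv,LurieGoodwillie}) of an equivalence $e$ in a fibrant scaled simplicial set as an edge contained in a sub-simplicial set $K\subseteq X_y$ that is a Kan complex with all triangles thin. The inclusion is then a map $(K,\flat,\sharp)\to X_y$, and the right lifting property against \ref{mb:equivalences} for this $K$ marks every edge of $K$, in particular $e$.

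The reverse containment is the crux. Given a marked edge $e\colon a\to b$, the lifting property against \ref{mb:2Cartesianmorphs} makes $e$ a $p$-Cartesian edge of $X_y\to\Delta^0$. Feeding the horn $\Lambda^2_2$ that carries $e$ on $\{1,2\}$ and $\mathrm{id}_b$ on $\{0,2\}$ into the case $n=2$ of \ref{mb:2Cartesianmorphs} produces a thin triangle witnessing a right inverse $g\colon b\to a$ with $e\circ g\simeq \mathrm{id}_b$. To upgrade $g$ to a genuine quasi-inverse I would appeal to the full Cartesian property: as $e$ is $p$-Cartesian over the identity of the point, post-composition with $e$ is an equivalence of mapping $\infty$-categories $X_y(a,a)\to X_y(a,b)$, and from $e\circ(g\circ e)\simeq(e\circ g)\circ e\simeq e\simeq e\circ\mathrm{id}_a$ I would conclude $g\circ e\simeq\mathrm{id}_a$. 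Hence $e$ is an equivalence. The hard part will be exactly this last move — converting the lifting-theoretic content of \ref{mb:2Cartesianmorphs} into the cancellation statement on mapping $\infty$-categories — which I expect to carry out directly through fillings of the higher horns $\Lambda^n_n$ rather than through an abstract comparison of the two formulations of Cartesian edges.
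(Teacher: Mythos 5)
Your proposal is correct and follows essentially the same route as the paper: \ref{mb:coCartoverThin} forces thin $=$ lean in the fibre, \ref{mb:innerhorn}--\ref{mb:leftdeglefthorn} give the scaled anodyne lifts, \ref{mb:equivalences} marks the equivalences, and the converse containment is handled by hand. The paper dismisses that last containment as ``a trivial computation''; your sketch via \ref{mb:2Cartesianmorphs} (right inverse from the $n=2$ case, then cancellation via higher $\Lambda^n_n$ fillings) is a standard and valid way to carry it out.
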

	\begin{proof}
		Observe that it follows from  \ref{mb:coCartoverThin} that the thin triangles and the lean triangles of $X_y$ must coincide. Since $X_y$ has the right lifting property against maps \ref{mb:innerhorn}-\ref{mb:leftdeglefthorn} it follows that $X_y$ is an $\infty$-bicategory. It follows from \ref{mb:equivalences} that all equivalences must be marked. It is a trivial computation to verify that every marked edge in $X_y$ is an equivalence.
	\end{proof}
	
	\begin{lemma}\label{lem:2Cart2of3}
		The morphism 
		\[
		\func{
			\theta:\Bigr(\Delta^2, \{\Delta^{\{1,2\}},\Delta^{\{0,2\}}\},\sharp\Bigl) \to \Bigr(\Delta^2,\sharp,\sharp\Bigl)
		}
		\]
		is \bS-anodyne.
	\end{lemma}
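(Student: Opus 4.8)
Since the generators that can enlarge the collection of marked edges --- namely \ref{mb:composeacrossthin}, \ref{mb:2CartliftsExist} and \ref{mb:equivalences} --- only ever produce the \emph{composite} of two marked edges, a freshly attached Cartesian lift, or an equivalence, there is no purely formal way to mark the edge $\Delta^{\{0,1\}}$ by hand. The plan is therefore to argue semantically. Because $\bS$ is a set, the small object argument identifies the weakly saturated closure of $\bS$ with the class of maps having the left lifting property against every $\bS$-fibration, so it suffices to solve every lifting problem of $\theta$ against an $\bS$-fibration $p\colon X\to Y$. As $\theta$ is the identity on underlying simplicial sets, unwinding such a square reduces the statement to the following: if $\sigma\colon\Delta^2\to X$ is a thin triangle with $d_0\sigma=g$ and $d_1\sigma=h$ in $E_X$ and with $p(d_2\sigma)\in E_Y$, then the edge $f=d_2\sigma$ lies in $E_X$. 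This is exactly the $2$-out-of-$3$ property for $p$-Cartesian edges in the present setting, with fibres controlled by \autoref{lem:overthepoint}.

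First I would manufacture the canonical Cartesian lift. Applying \ref{mb:2CartliftsExist} to the marked edge $p(f)$ of $Y$ and the vertex $1$ produces a marked edge $\tilde f\colon\tilde 0\to 1$ in $X$ with $p(\tilde f)=p(f)$. Feeding the horn on $(f,\tilde f)$ and the degenerate base $s_0 p(f)$ into the Cartesian lifting property \ref{mb:2Cartesianmorphs} of $\tilde f$ yields a triangle $\tau$ with $d_0\tau=\tilde f$, $d_1\tau=f$, and third edge $\alpha\colon 0\to\tilde 0$ lying over $\mathrm{id}_{p(0)}$, i.e.\ an edge of the fibre $X_{p(0)}$. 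Since $p(\tau)=s_0 p(f)$ is thin, \ref{mb:coCartoverThin} promotes $\tau$ to a thin triangle, so that $f\simeq\tilde f\circ\alpha$.

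The heart of the argument is to show that $\alpha$ is an equivalence in $X_{p(0)}$. Filling the inner horn on $(\tilde f,g)$ by \ref{mb:innerhorn} and composing marked edges across the resulting thin triangle by \ref{mb:composeacrossthin} gives a marked edge $g\tilde f\colon\tilde 0\to 2$. Pasting $\sigma$, $\tau$ and this last triangle along the $3$-simplex on $\{0,\tilde 0,1,2\}$, whose $\Lambda^3_2$-horn they constitute and which I fill by \ref{mb:innerhorn}, yields a thin triangle $\sigma'$ with $d_0\sigma'=g\tilde f$, $d_1\sigma'=h$ and $d_2\sigma'=\alpha$, exhibiting $h\simeq(g\tilde f)\circ\alpha$. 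Now $g\tilde f$ and $h$ are two marked, hence $p$-Cartesian, edges ending at $2$ and lying over the same morphism $p(h)=p(g)\,p(f)$, and $\alpha$ is the induced comparison in the fibre over $p(0)$; the essential uniqueness of Cartesian lifts --- which I would prove by a short diagram chase using \ref{mb:2Cartesianmorphs} for both $g\tilde f$ and $h$ to construct an inverse of $\alpha$ together with the two invertibility $2$-cells --- shows $\alpha$ is an equivalence. By \autoref{lem:overthepoint} the fibre $X_{p(0)}$ has precisely its equivalences marked, so $\alpha\in E_X$.

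Finally, the thin triangle $\tau$ witnesses $f\simeq\tilde f\circ\alpha$ with both $\tilde f$ and $\alpha$ marked, so a last application of \ref{mb:composeacrossthin} places $f$ in $E_X$ and solves the lifting problem. I expect the third paragraph --- establishing that the comparison edge $\alpha$ is a fibrewise equivalence --- to be the main obstacle, since it is the only step invoking an honest property of $\bS$-fibrations (essential uniqueness of Cartesian lifts) rather than a formal application of a generator; everything else is horn-filling together with \ref{mb:composeacrossthin} and \ref{mb:coCartoverThin}.
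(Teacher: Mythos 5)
Your argument is correct and follows essentially the same route as the paper's proof: reduce to lifting against \bS-fibrations and conclude by the retract argument, produce a marked lift $\tilde f$ of the image of the edge to be marked via \ref{mb:2CartliftsExist}, construct the comparison edge $\alpha$ in the fibre via \ref{mb:2Cartesianmorphs} and \ref{mb:coCartoverThin}, show $\alpha$ is a fibrewise equivalence (hence marked by \autoref{lem:overthepoint}) by comparing the two Cartesian lifts $h$ and $g\tilde f$, and finish with \ref{mb:composeacrossthin}. The one place the paper is slightly more careful is in making the $3$-simplex on $\{0,\tilde 0,1,2\}$ maximally thin-scaled: the $\Lambda^3_2$-fill of type \ref{mb:innerhorn} does not by itself scale the face $\sigma'$, and an additional pushout of type \ref{mb:wonky4} (or of types \ref{mb:innersaturation} and \ref{mb:coCartoverThin}) is needed before the uniqueness-of-Cartesian-lifts step.
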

	
	\begin{proof}
		We first note that, given a \bS-fibration $f:(X,E_X,T_X \subseteq C_X) \to (S,\sharp,T_S )$, we can find a lift of $\theta$ as follows. Suppose we have a lifting problem 
		\[
		\begin{tikzcd}
		\Bigr(\Delta^2, \{\Delta^{\{1,2\}},\Delta^{\{0,2\}}\},\sharp\Bigl)\arrow[r,"\sigma"]\arrow[d,"\theta"'] & X\arrow[d,"f"] \\
		\Bigr(\Delta^2,\sharp,\sharp\Bigl)\arrow[r] & S
		\end{tikzcd}
		\]
		Where the top arrow corresponds to the thin 2-simplex 
		\[
	\sigma:\qquad 	\begin{tikzcd}[column sep=1em]
		 & b\arrow[dr,circled,"u"] & \\
		 a\arrow[ur,"v"]\arrow[rr,circled,"w"'] & & c
		\end{tikzcd}
		\]
		Since $f:X\to S$ is a \bS-fibration, we can choose a marked lift $\hat{v}:\hat{a}\to b$ of $f(v)$. Using a lift of type \ref{mb:innerhorn} to compose $u$ and $\hat{v}$ and a lift of type \ref{mb:2Cartesianmorphs} to obtain a morphism from $a$ to $\hat{a}$, we can obtain a $\Lambda^3_2$-horn, all of whose sides are thin-scaled. We can fill this to a maximally thin-scaled 3-simplex using a pushout of type \ref{mb:innerhorn} and a pushout of type \ref{mb:wonky4}. This three-simplex has the form 
		\[
		\begin{tikzcd}
		 & c & \\
		 & & \\
		 a\arrow[uur,circled,"w"]\arrow[dr,"v"']\arrow[rr,"p"',pos=0.3] & & \hat{a}\arrow[dl,circled,"\hat{v}"]\arrow[uul,"q"'] \\
		  & b\arrow[uuu,circled,"u"] & 
		\end{tikzcd}
		\] 
		Since every triangle is scaled, we can apply lifts of type \ref{mb:composeacrossthin} to show that $q$ is marked. This implies that $p$ is and equivalence in the fibre over $f(a)$, and so $p$ is marked. Thus, a lift of type \ref{mb:composeacrossthin} shows that $v$ is marked as desired.
		
		The fact that $\theta$ is \bS-anodyne then follows from the small object argument, which allows us to display $\theta$ as a retract of the \bS-anodyne replacement of $\theta$. 
	\end{proof}
	
	\begin{definition}
		We say that a map $f:(X,E_X,T_X \subseteq C_X) \to (Y,E_Y,T_Y \subseteq C_Y)$ in $s$ is a cofibration if its underlying map of simplicial sets is a monomorphism.
	\end{definition}
	
	\begin{remark}
		The generators of the class of cofibrations are given by
		\begin{itemize}
			\myitem{(C1)}\label{cof:bndry} $\Bigr(\partial \Delta^n,\flat,\flat\Bigl) \rightarrow \Bigr( \Delta^n,\flat,\flat\Bigl)$.
			\myitem{(C2)}\label{cof:marked1} $\Bigr(\Delta^1,\flat,\flat \Bigl) \rightarrow \Bigr(\Delta^1,\sharp,\flat \Bigl) $.
			\myitem{(C3)}\label{cof:coCart} $\Bigr(\Delta^2,\flat,\flat\Bigl) \rightarrow \Bigr(\Delta^2,\flat,\flat \subset \sharp)$.
			\myitem{(C4)}\label{cof:thin} $\Bigl(\Delta^2,\flat,\flat \subset \sharp \Bigr) \rightarrow \Bigl( \Delta^2,\flat,\sharp\Bigr)$.
		\end{itemize}
		Note that \ref{cof:thin} and \ref{mb:coCartoverThin} are the same morphism.
	\end{remark}
	
	\begin{proposition}\label{prop:PP}
		Let $f:(X,E_X,T_X \subseteq C_X) \to (Y,E_Y,T_Y \subseteq C_Y)$ be a \bS-anodyne morphism and let $g:(A,E_A,T_A \subseteq C_A) \to (B,E_B,T_B \subseteq C_B)$ be a cofibration. Then the pushout-product
		\[
		\func{f \wedge g: X \times B \coprod\limits_{X \times A}Y \times A \to Y \times B }
		\]
		is \bS-anodyne.\footnote{Note that this proposition is about the pushout-product of marked biscaled simplicial sets. For readability, we have omitted the marking and biscaling from the notation in the conclusion.}
	\end{proposition}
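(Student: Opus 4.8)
The plan is to reduce the statement to a finite check on generators and then dispatch each resulting product by an explicit filtration of a product of simplices. First I would fix the cofibration $g$ and consider the class of maps $f$ for which $f \wedge g$ is $\bS$-anodyne. Because the one-variable functor $(-) \wedge g$ sends pushouts to pushouts, transfinite composites to transfinite composites, and retracts to retracts, and because the $\bS$-anodyne maps are by definition the weakly saturated closure of $\bS$, this class is weakly saturated. Hence it suffices to treat $f$ ranging over the generators \ref{mb:innerhorn}--\ref{mb:equivalences}. Symmetrically, for each such generating $f$, the class of $g$ with $f \wedge g$ $\bS$-anodyne is weakly saturated, so I may further assume $g$ is one of the generating cofibrations \ref{cof:bndry}--\ref{cof:thin}. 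This leaves a finite, if sizable, table of pushout-products to analyze, with the bulk of the work concentrated in the products against the boundary inclusion \ref{cof:bndry}.

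The core technique for the cases in which both $f$ and $g$ have nontrivial underlying simplicial maps --- most importantly the horn generators \ref{mb:innerhorn}, \ref{mb:leftdeglefthorn}, and \ref{mb:2Cartesianmorphs} paired with \ref{cof:bndry} --- is the classical filtration of a product of simplices by its nondegenerate simplices. Writing the target as $\Delta^n \times \Delta^m$, its nondegenerate simplices are indexed by shuffles, i.e. by maximal strictly increasing lattice paths in the grid $[n] \times [m]$; I would order these by dimension and then lexicographically, adjoin them to the source one at a time, and exhibit each stage of the resulting filtration as a pushout along a horn inclusion. Throughout, I must equip $\Delta^n \times \Delta^m$ with the product marking and biscaling and verify at every step that the horn being filled carries \emph{exactly} the scaling and marking data of one of the generators; this bookkeeping, rather than the combinatorics of the shuffles, is where the decorations of the biscaled setting make the argument heavier than its scaled or marked antecedents.

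The genuinely delicate cases are the outer generators \ref{mb:leftdeglefthorn} and \ref{mb:2Cartesianmorphs}, the lift-existence generator \ref{mb:2CartliftsExist}, and the saturation generators \ref{mb:composeacrossthin}--\ref{mb:coCart2of3}. For the outer horns the anodyne condition depends on finely tuned data --- left-degeneracy, the marked edge $\Delta^{\{n-1,n\}}$, and the distinguished lean triangle $\Delta^{\{0,n-1,n\}}$ --- so as shuffles are peeled off I expect that the horns produced naively will often fail to be of the prescribed decorated type. The remedy will be to interpolate using the saturation generators together with the composability result of Lemma~\ref{lem:2Cart2of3}, upgrading triangles to thin or lean and edges to marked at the appropriate moments, exactly in the spirit of the proof of Lemma~\ref{lem:2Cart2of3} itself. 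The generator \ref{mb:equivalences} is a case apart: its underlying simplicial map is the identity, so $f \wedge g$ is the identity on underlying simplicial sets and merely enlarges the marking of $K \times B$; here the shuffle filtration is irrelevant, and I would instead argue that the newly marked edges remain equivalences in the relevant fibres (as detected in Lemma~\ref{lem:overthepoint}) and hence are markable by a further application of \ref{mb:equivalences}. The products against the low-dimensional cofibrations \ref{cof:marked1}--\ref{cof:thin} should by contrast be routine, reducing to direct inspection of products of $2$-simplices and the saturation generators.

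I expect the main obstacle to be precisely the decoration-tracking in the outer-horn cases: guaranteeing that every intermediate horn produced by the shuffle filtration is genuinely one of the generators \ref{mb:innerhorn}--\ref{mb:coCart2of3}, and not merely an inner horn carrying the wrong scaling or an outer horn missing the required lean or marked datum. The organizing principle will be a careful subcase analysis indexed by where the distinguished vertices $0$, $n-1$, and $n$ of the horn land relative to the $\Delta^m$-direction of the shuffle, and the key technical point is to confirm that \ref{mb:composeacrossthin}--\ref{mb:coCart2of3} and Lemma~\ref{lem:2Cart2of3} together suffice to bridge every gap between a naively produced horn and an actual generator. Once this is arranged uniformly, assembling the filtration steps into a single $\bS$-anodyne map is formal.
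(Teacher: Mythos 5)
Your overall architecture --- reduction to the generating (co)fibrations by weak saturation in each variable, followed by a shuffle filtration of $\Delta^n\times\Delta^m$ for the products against \ref{cof:bndry} --- matches the paper's, and you correctly single out the outer generators \ref{mb:leftdeglefthorn} and \ref{mb:2Cartesianmorphs} against \ref{cof:bndry} as the hard cases (the paper isolates the first as \autoref{prop:nightmare} and treats the second as its dual). But there is a genuine gap at the core of the plan: you propose to exhibit each stage of the shuffle filtration as a pushout along a single generating horn, and to repair any decoration mismatch by interpolating with the saturation generators \ref{mb:composeacrossthin}--\ref{mb:coCart2of3} and \autoref{lem:2Cart2of3}. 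This misidentifies the actual difficulty. When the second factor is $\Lambda^m_0\coprod_{\Delta^{\{0,1\}}}\Delta^0$, the intersection of a top-dimensional shuffle $\sigma_{(\vec a,\vec b)}$ with the preceding stage omits \emph{several} codimension-one faces at once --- one for each turning point of the form $(a_r,b_{r-1})$ of the corresponding lattice path --- so the attaching map is a \emph{generalized} horn, not a pushout along any single generator. The paper's resolution is combinatorial, not decorational: \autoref{lem:indI} and \autoref{lem:indII} prove that the generalized horn inclusions $\Lambda^m_{\vec i}\to\Delta^m$ (and their left-degenerate analogues) are \bS-anodyne by peeling off one missing face at a time. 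Your plan contains no analogue of these lemmas, and the tools you propose cannot substitute for them: the saturation generators and \autoref{lem:2Cart2of3} only upgrade decorations on simplices already present; they cannot fill a simplex whose complement in the previous stage is not a single face. (The same issue already appears in the inner case \ref{cof:bndry} against \ref{mb:innerhorn}, which the paper dispatches by citing the generalized inner horns of \cite[Rem.\ 3.1.4]{LurieGoodwillie}.)

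Conversely, the obstacle you do anticipate --- horns carrying the wrong marking or scaling --- essentially does not arise: at each turning point $(a_\ell,b_{\ell-1})$ the triangle $\sigma(\{j-1,j,j+1\})$ lies in a copy of $\Delta^1\times\Delta^1$ and is therefore thin for the product biscaling, and when $a_1=0$ the triangle $\sigma(\{0,1,n+m\})$ is lean because it projects degenerately to $\Delta^n$ and onto the distinguished lean triangle $\{0,1,m\}$ in the other factor. The decorations are thus supplied for free by the product structure, with no interpolation needed; in the paper \autoref{lem:2Cart2of3} is invoked only in the case \ref{cof:marked1} against \ref{mb:2CartliftsExist}. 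Finally, for \ref{cof:marked1} against \ref{mb:equivalences} the paper marks the new diagonal edges by pushouts of type \ref{mb:composeacrossthin}; your proposed detour through equivalences in fibres is out of place here (there is no fibration whose fibres one could inspect), although a direct pushout of a type \ref{mb:equivalences} generator would also do the job.
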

	
	Before embarking on our proof of the pushout-product, we will tackle one particularly recalcitrant case by itself. As it so happens, a case nearly precisely dual to this one also occurs in checking the pushout-product. To save paper (and the reader's eyesight), we will only provide the proof of one of these cases, trusting that it will be apparent how to dualize the argument.   
	
	We we first prove two quick lemmata, which will somewhat ease the coming proof.
	
	\begin{construction}\label{cons}
		We construct a marking and biscaling on $\Delta^m$. For an non-consecutive list of vertices $\vec{i}=\{i_1,\ldots i_{k+1}\}$ which does not contain $0$ of $m$, denote by $\Lambda^{m}_{\vec{i}}$ the simplicial subset whose simplices are those whose corresponding subsets $J\subset [m]$ such that $J$ skips a vertex $j \in [m]$ such that $j \notin \vec{i}$. Define a biscaling $T_{\vec{i}}$ on $\Delta^{m}$ by requiring that $\Delta^{\{i-1,i,i+1\}}$ is thin for every $i\in I$.  
	\end{construction}
	
	\begin{lemma}\label{lem:indI}
		The inclusion 
		\[
		\func{
			( \Lambda^{m}_{\vec{i}},\flat, T_{\vec{i}}) \to  (\Delta^{m},\flat,T_{\vec{i}})
		}
		\]
		is \bS-anodyne for $m\geq 2$.
	\end{lemma}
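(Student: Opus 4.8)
The plan is to realise the inclusion as a finite composite of pushouts of the generating inner‑horn maps \ref{mb:innerhorn}, organised by a filtration of $\Delta^m$ over $\Lambda^m_{\vec i}$ indexed by the subsets of $\vec i$. First I would identify the simplices that must be filled. Writing $L=[m]\setminus\vec i$, a nondegenerate simplex $J\subseteq[m]$ fails to lie in $\Lambda^m_{\vec i}$ exactly when it skips no vertex outside $\vec i$, i.e. when $L\subseteq J$; such $J$ are precisely the faces $F_S:=L\cup S$ for $S\subseteq\vec i$, with the two extremes $F_\emptyset=L$ and $F_{\vec i}=\Delta^m$. Fixing the largest element $j\in\vec i$, I pair each missing simplex $F_S$ with $j\notin S$ to the filler $F_{S\cup\{j\}}$; deleting $j$ recovers $F_S$ as a face of $F_{S\cup\{j\}}$, and this pairs all missing simplices bijectively.

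Next I would attach the pairs $(F_S,F_{S\cup\{j\}})$ in order of increasing $|S|$. At the stage indexed by $S$, set $V=L\cup S\cup\{j\}$, so $\Delta^V=F_{S\cup\{j\}}$. The key combinatorial check is that the intersection of $\Delta^V$ with the subcomplex built so far is exactly the horn $\Lambda^V_j$: any proper face of $\Delta^V$ either omits a vertex of $L$, and so already lies in $\Lambda^m_{\vec i}$, or has the form $F_T$ with $T\subsetneq S\cup\{j\}$, in which case it was attached at an earlier stage $|T|<|S|$, the sole exception being $T=S$, which is the face $F_S$ opposite $j$ attached simultaneously. Hence each stage is a pushout along the inner horn $\Lambda^V_j\hookrightarrow\Delta^V$ at the vertex $j$, and composing over the filtration (the base case $|\vec i|=1$ being \ref{mb:innerhorn} itself) yields the inclusion.

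The hypotheses on $\vec i$ enter precisely to make these pushouts honest instances of \ref{mb:innerhorn}, and verifying this \emph{scaling bookkeeping} is the step I expect to be the main obstacle. Since $\vec i$ avoids $0$ and $m$ while $0,m\in L\subseteq V$, the vertex $j$ is interior to $V$, so $\Lambda^V_j$ is an \emph{inner} horn. Since $\vec i$ is non-consecutive, $j-1$ and $j+1$ lie in $L\subseteq V$ and are the immediate neighbours of $j$ there, so the unique thin$=$lean triangle demanded by \ref{mb:innerhorn}, namely $\Delta^{\{j-1,j,j+1\}}$, is exactly the one scaled by $T_{\vec i}$. The delicate point is that $\Delta^{\{j-1,j,j+1\}}$ is the \emph{only} $T_{\vec i}$‑scaled triangle of $\Delta^V$ not already present: every triangle $\Delta^{\{i-1,i,i+1\}}$ with $i\in\vec i\cap S$ lies in $\Lambda^m_{\vec i}$ and is thin from the outset, while the newly created face $F_S$ contains $j-1$ and $j+1$ but not $j$ and so is never of the form $\{i-1,i,i+1\}$, hence needs no scaling. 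One thereby confirms that each stage matches \ref{mb:innerhorn} on the nose, with no recourse to \ref{mb:wonky4} or any other generator, proving the map is \bS-anodyne.
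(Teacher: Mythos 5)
Your proof is correct, but it organizes the filling differently from the paper. The paper argues by induction on the length of $\vec{i}$: it first fills the face of $\Delta^m$ opposite $i_1$, whose intersection with $\Lambda^m_{\vec i}$ is the lower-dimensional generalized horn $\Lambda^{m-1}_{\vec i\setminus\{i_1\}}$, and then observes that the resulting subcomplex is exactly $\Lambda^m_{\vec i\setminus\{i_1\}}$, so a second application of the inductive hypothesis finishes the argument; fully unwound, this produces inner-horn pushouts centred at varying vertices of $\vec i$. You instead give a single explicit filtration with no induction: you identify the missing simplices as the faces $F_S=L\cup S$ for $S\subseteq\vec i$ (with $L=[m]\setminus\vec i$), pair each $F_S$ with $F_{S\cup\{j\}}$ using the largest element $j\in\vec i$, and attach each pair by a pushout along $\Lambda^V_j\hookrightarrow\Delta^V$ with $V=L\cup S\cup\{j\}$ --- all horns centred at the same vertex $j$. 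Your verification of the key points is sound: the intersection of $\Delta^V$ with the previously built subcomplex is exactly $\Lambda^V_j$ (the only absent proper face being $F_S$, opposite $j$); the horn is inner since $0,m\in L$; non-consecutivity of $\vec i$ puts $j-1,j+1$ in $L$, so the triangle required to be scaled by \ref{mb:innerhorn} is $\Delta^{\{j-1,j,j+1\}}\in T_{\vec i}$; and no spurious scaling is introduced, since for $m\geq 3$ every triangle of $T_{\vec i}$ already lies in $\Lambda^m_{\vec i}$. Both arguments establish the slightly stronger fact, noted in the paper's remark after the lemma, that the inclusion lies in the saturated hull of morphisms of type \ref{mb:innerhorn} alone; your version buys a completely explicit, induction-free cell decomposition, while the paper's recursion is shorter to state and is reused almost verbatim for \autoref{lem:indII}. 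One cosmetic remark: your parenthetical about the base case $|\vec i|=1$ is unnecessary, since your filtration already covers that case with a single pushout.
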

	
	\begin{proof}
		We proceed by induction on the length of $\vec{i}$. When $\on{length}(\vec{i})=1$, this is simply a morphism of type \ref{mb:innerhorn}. 
		
		Now suppose that this holds for $\on{length}(\vec{i})<k+1$. and let $i_1,\ldots ,i_{k+1}$ be a $k+1$-tuple satisfying the hypotheses above. Define $\vec{j}=\vec{i}\setminus \{i_1\}$, and consider the $m-1$-simplex 
		\[
		\sigma: \Delta^{m-1}\to \Delta^{m}
		\] 
		which skips $i_1$. Then $\sigma\cap \Lambda^{m}_{\vec{i}}=\Lambda^{m-1}_{\vec{j}}$, and so, by the inductive hypothesis, we can fill this simplex to obtain a new simplicial subset 
		\[
		\Lambda^{m}_{\vec{i}}\subset X\subset \Delta^m.
		\]
		We then see that $X$ will consist of precisely those subsimplices of $\Delta^m$ which either (a) skip $i_1$ or (b) skip a vertex $j$ not belonging to $\vec{i}$. More simply put, precisely those simplices which skip a vertex not contained in $\{i_2,\ldots, i_{k+1}\}$. Consequently, 
		\[
		X= (\Lambda^m_{\vec{i}\setminus\{i_1\}},\flat,T_{\vec{i}})
		\]
		and so, by the inductive hypothesis, this map is \bS-anodyne.
	\end{proof}
	
	\begin{remark}
		The proof above actually shows that the map in question is in the saturated hull of morphisms of type \ref{mb:innerhorn}. We will adapt the proof slightly for the next lemma, which will show a related result for morphisms of type  \ref{mb:leftdeglefthorn}. Note that in the first proposition we are adapting \autoref{cons} to a subset $\vec{i}$ containing the vertex $0$.
	\end{remark}
	
	\begin{lemma}\label{lem:indII}
		Let $\vec{i}:=\{0,i_1,\ldots ,i_{k+1}\}$ be a set of distinct vertices of $\Delta^m\coprod_{\Delta^{\{0,1\}}}\Delta^0$  such that 
		\begin{itemize}
			\item $1<i_1<i_2<\cdots <i_{k+1}<n$ 
			\item The $i_j$ are non-consecutive.
			\item For every $1\leq j\leq k+1$, the simplex $\{i_j-1,i_j,i_j+1\}$ is biscaled. 
			\item The simplex $\{0,1,m\}$ is lean-scaled.
		\end{itemize}
		Then the map 
		\[
		(\Lambda_{\vec{i}}^m)\coprod_{\Delta^{\{0,1\}}}\Delta^0\to \Delta^m\coprod_{\Delta^{\{0,1\}}}\Delta^0
		\]
		is \bS-anodyne.
	\end{lemma}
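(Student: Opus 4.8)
The plan is to induct on the number $k+1$ of interior vertices $i_1,\dots,i_{k+1}$, following verbatim the strategy of \autoref{lem:indI} but replacing the inner horns \ref{mb:innerhorn} by the left-degenerate horns \ref{mb:leftdeglefthorn}. Throughout, each simplicial set will be equipped with the $\flat$ marking and the biscaling $T_{\vec i}$ enlarged by the single lean triangle $\Delta^{\{0,1,m\}}$, and the collapse $(-)\coprod_{\Delta^{\{0,1\}}}\Delta^0$ will be carried along unchanged. Since this collapse is a colimit it preserves pushouts, so all of the combinatorial bookkeeping can be done with honest faces of $\Delta^m$ and only collapsed at the end.

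For the base of the induction I would take $\vec i=\{0\}$ (no interior vertices). Here $\Lambda^m_{\vec i}$ consists of the faces skipping at least one vertex of $\{1,\dots,m\}$; its only missing faces are $\Delta^m$ and $d_0\Delta^m$, so $\Lambda^m_{\{0\}}=\Lambda^m_0$. With no thin triangles and $\Delta^{\{0,1,m\}}$ lean, the map is exactly a generator of type \ref{mb:leftdeglefthorn}, hence $\bS$-anodyne.

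For the inductive step I would assume the statement for all interior lengths $<k+1$ in every dimension, and set $\vec j=\vec i\setminus\{i_1\}=\{0,i_2,\dots,i_{k+1}\}$. Let $\sigma\colon\Delta^{m-1}\hookrightarrow\Delta^m$ be the face skipping $i_1$. Because $1<i_1$ and $i_{k+1}<m$, the face $\sigma$ contains $0,1,m$, so it retains both the collapsed edge $\{0,1\}$ and the lean triangle $\Delta^{\{0,1,m\}}$; and since the $i_j$ are non-consecutive, none of the thin triangles $\Delta^{\{i_j-1,i_j,i_j+1\}}$ with $j\geq2$ contains $i_1$, so under the order-isomorphism $\sigma\cong\Delta^{m-1}$ they become precisely the thin triangles prescribed by $\vec j$. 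A direct comparison of faces gives $\sigma\cap\Lambda^m_{\vec i}=\Lambda^{m-1}_{\vec j}$, both being the faces of $\sigma$ that skip a vertex of $[m]\setminus\vec i$. Hence the inclusion $\Lambda^{m-1}_{\vec j}\coprod_{\Delta^{\{0,1\}}}\Delta^0\to\Delta^{m-1}\coprod_{\Delta^{\{0,1\}}}\Delta^0$ is an instance of the lemma in dimension $m-1$ and interior length $k$, so by the inductive hypothesis it is $\bS$-anodyne; pushing it out to fill $\sigma$ yields $\Lambda^m_{\vec i}\coprod_{\Delta^{\{0,1\}}}\Delta^0\subset X$ with $\bS$-anodyne inclusion.

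Just as in \autoref{lem:indI}, a face of $\Delta^m$ lies in $X$ precisely when it skips a vertex outside $\vec i$ or skips $i_1$, i.e. when it skips a vertex outside $\vec j$; therefore $X=\Lambda^m_{\vec j}\coprod_{\Delta^{\{0,1\}}}\Delta^0$. The list $\vec j$ has interior length $k$ and still satisfies every hypothesis of the lemma in dimension $m$, so $X\to\Delta^m\coprod_{\Delta^{\{0,1\}}}\Delta^0$ is $\bS$-anodyne by the inductive hypothesis, and composing the two maps finishes the proof. The one genuinely new point compared to \autoref{lem:indI} — and the step I expect to need the most care — is checking that peeling the innermost vertex $i_1$ preserves the collapsed edge and the lean triangle $\Delta^{\{0,1,m\}}$ inside $\sigma$, so that the fill of $\sigma$ really is governed by the left-degenerate family \ref{mb:leftdeglefthorn} rather than the inner family; this is exactly what the hypotheses $1<i_1$, $i_{k+1}<m$, and the non-consecutiveness of the $i_j$ guarantee.
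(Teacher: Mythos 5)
Your proposal is correct and follows essentially the same inductive peeling argument as the paper's proof: delete $i_1$, identify $\sigma\cap\Lambda^m_{\vec i}$ with $\Lambda^{m-1}_{\vec i\setminus\{i_1\}}$, fill $\sigma$ by the inductive hypothesis in dimension $m-1$, and recognize the result as $\Lambda^m_{\vec i\setminus\{i_1\}}\coprod_{\Delta^{\{0,1\}}}\Delta^0$ to conclude by a second application of the hypothesis. Your additional verification that the hypotheses $1<i_1$, $i_{k+1}<m$, and non-consecutiveness guarantee that $\sigma$ retains the collapsed edge and the lean triangle $\Delta^{\{0,1,m\}}$ is a detail the paper leaves implicit, but it does not change the argument.
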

	
	\begin{proof}
		We once again proceed by induction on the length of $\vec{i}$. If $\vec{i}=\{0\}$, then this is a morphism of type \ref{mb:leftdeglefthorn}. If $\vec{i}=\{0,i_1\}$, then we can fill the simplex obtained by deleting $i_1$ using a pushout of type \ref{mb:leftdeglefthorn}, the resulting inclusion is again an inclusion of type \ref{mb:leftdeglefthorn}. 
		
		We now assume, inductively, that the statement holds for any $\vec{i}$ of length less than $k+2$, and let $\vec{i}=\{0,i_1,\ldots, i_{k+1}\}$. Consider the simplex $\sigma:\Delta^{m-1}\to \Delta^m$ obtained by deleting $i_1$. Then we see that 
		\[
		(\Lambda^m_{\vec{i}}\coprod_{\Delta^{\{0,1\}}}\Delta^0)\cap \sigma= \Lambda^{m-1}_{\vec{i}\setminus i_1}\coprod_{\Delta^{\{0,1\}}}\Delta^0
		\]
		so that, by the inductive hypothesis, we can fill $\sigma$ using an \bS-anodyne morphism. The resulting simplicial subset $X$ in 
		\[
		(\Lambda_{\vec{i}}^m)\coprod_{\Delta^{\{0,1\}}}\Delta^0\to X\to  \Delta^m\coprod_{\Delta^{\{0,1\}}}\Delta^0
		\]
		consists of precisely those subsimplices of $\Delta^m$ which skip $i_1$ or which skip an element not in $\vec{i}$. More precisely 
		\[
		X=\Lambda^{m}_{\vec{i}\setminus \{i_1\}}\coprod_{\Delta^{\{0,1\}}} \Delta^0  
		\]
		and thus, by the inductive hypothesis, 
		\[
		X\to \Delta^m\coprod_{\Delta^{\{0,1\}}}\Delta^0 
		\]
		is \bS-anodyne, completing the proof.
	\end{proof}
	
	\begin{proposition}\label{prop:nightmare}
		Denote by  
		\[
		\func{f:(\partial \Delta^n,\flat,\flat)\to (\Delta^n,\flat,\flat)} 
		\]
		a morphism of type \ref{cof:bndry}, and by
		\[ \func{g:\Bigl(\Lambda^m_0\coprod_{\Delta^{\{0,1\}}}\Delta^0,\flat, \flat\subset \{\Delta^{\{0,1,m\}}\}\Bigr)\to \Bigl(\Delta^m\coprod_{\Delta^{\{0,1\}}}\Delta^0,\flat,\flat\subset \{\Delta^{\{0,1,m\}}\}\Bigr)}
		\]
		a morphism of type \ref{mb:leftdeglefthorn}. Then the pushout-product $f\wedge g$ is \bS-anodyne.
	\end{proposition}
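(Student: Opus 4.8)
The plan is to exhibit the monomorphism underlying $f \wedge g$ as a finite composite of pushouts along the generalized horn inclusions of \autoref{lem:indI} and \autoref{lem:indII}. Since $\Set_\Delta$ is cartesian closed, the collapse of $\Delta^{\{0,1\}}$ commutes with the product by $\Delta^n$, so the codomain is $(\Delta^n \times \Delta^m)\coprod_{\Delta^n \times \Delta^{\{0,1\}}} \Delta^n$, and I would first analyze the uncollapsed product $\Delta^n \times \Delta^m$. Its non-degenerate simplices are the chains in the poset $[n]\times[m]$, and such a chain already lies in the domain $\partial\Delta^n \times B \cup_{\partial\Delta^n \times A}\Delta^n \times A$ unless its projection to $[n]$ is surjective and its projection to $[m]$ meets every vertex of $\{1,\ldots,m\}$; indeed $\Lambda^m_0$ is exactly the union of the faces $d_j\Delta^m$ with $1\leq j\leq m$, so a chain factors through $\Delta^n \times A$ precisely when its $[m]$-projection omits some $j\in\{1,\ldots,m\}$. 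Passing to the collapse, I would then discard the chains that become degenerate — exactly those containing a vertical step $(a,0)\to(a,1)$, whose two endpoints are identified after collapsing — and record, for each surviving chain, which of its $2$-faces becomes thin or left-degenerate lean.

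Next I would organize the remaining chains into a filtration governed by how each chain interacts with the collapsed edge and with the initial vertex $(0,0)$. Chains whose filling forces a left (zeroth) horn carrying the left-degenerate lean triangle supported on $\Delta^{\{0,1,m\}}$ would be adjoined by pushouts along the maps of \autoref{lem:indII}, while the remaining chains — including those for which the collapse has turned a would-be outer horn into an inner one — would be adjoined by pushouts along the inner generalized horns of \autoref{lem:indI}. The point of invoking these two lemmas, rather than filling one simplex at a time, is that each maximal chain $\sigma$ arrives together with a whole family of faces available only through $\sigma$, and this family is exactly the complement of a generalized horn $\Lambda^{p}_{\vec i}$ (respectively its left-degenerate variant), so a single pushout fills all of them at once. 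I would fix a linear order on the maximal chains refining dimension, arranged so that at each stage the newly available faces form such a generalized horn.

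The third step is to match the scalings. For the inner steps this amounts to checking that the thin triangles prescribed by $T_{\vec i}$ in \autoref{cons} coincide with the thin triangles induced on the product from the flat scalings of $\Delta^n$ and $B$ together with the degeneracies created by the collapse, and that no further lean triangle is demanded beyond what the ambient mb structure supplies. For the left-degenerate steps it amounts to checking that the single lean triangle of \autoref{lem:indII} is the image of $\Delta^{\{0,1,m\}}$ and that the index set $\vec i\ni 0$ is non-consecutive and biscaled exactly as that lemma requires. Both are combinatorial verifications carried out on the $2$-faces of the chain $\sigma$.

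The principal obstacle is the interplay between the collapse and the scalings. Collapsing $\Delta^{\{0,1\}}$ simultaneously kills some shuffles, identifies others, and can convert certain would-be outer horns into inner horns, so the naive indexing of simplices must be corrected before it can be partitioned into the two lemma-types; and the left-degenerate lean triangle must be tracked through the product so that it lands on the $\{0,1,m\}$-face of the filling horn in precisely the cases that call for \autoref{lem:indII}. Once the surviving chains are correctly partitioned and their induced thin and lean scalings are matched against $T_{\vec i}$ and against the distinguished lean triangle, the proposition follows by composing the resulting pushouts.
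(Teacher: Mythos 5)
Your proposal follows essentially the same route as the paper's proof: the paper filters the collapsed product by its shuffles, ordered lexicographically via their ``turning points,'' and attaches each one along a generalized horn of the type in \autoref{lem:indI} (when the shuffle does not begin by traversing the collapsed edge) or of the type in \autoref{lem:indII} (when it does), with the required thin triangles supplied by $\Delta^1\times\Delta^1$ squares and the required lean triangle by the image of $\Delta^{\{0,1,m\}}$, exactly as you predict. The one piece of content you defer --- that the faces newly available at each stage are precisely the complement of such a generalized horn, supported at non-consecutive, suitably scaled positions --- is exactly what the paper's turning-point bookkeeping delivers (new faces are those deleting a ``down-to-right'' corner $(a_r,b_{r-1})$, old ones those deleting $(a_r,b_r)$), so your plan is sound and matches the paper's argument.
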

	
	Before beginning the proof, we create a diagram for reference. We visualize the product of the targets as a grid, with some simplices which get collapsed. 
	\begin{center}
		\begin{tikzpicture}
		\foreach \x in {0,1,2,3} {
			\foreach \y in {0,1,2,3,4} {
				\pgfmathsetmacro\xr{\x*2}
				\pgfmathsetmacro\yr{(4-\y)*2}
				\path (\xr,\yr) node (A\x\y) {$\y\x$};	
			};
		};
		\foreach \x/\xr in {1/2,2/3}{
			\foreach \y in {0,1,2,3,4}{
				\draw[->] (A\x\y) to (A\xr\y);
			};
		};
		\foreach \y in {0,1,2,3,4}{
			\draw[red,->] (A0\y) to (A1\y);
		};
		\foreach \y/\yr in {0/1,1/2,2/3,3/4}{
			\foreach \x in {0,1,2,3} {
				\draw[->] (A\x\y) to (A\x\yr);
			}; 	
		};
		\end{tikzpicture}
	\end{center}
	In the diagram above, we are looking at $\Delta^4\times \Delta^3$, and the 1-simplices in red are those which get collapsed. 
	
	\begin{proof}
		Since the case $n=0$ is simply the original type \ref{mb:leftdeglefthorn} morphism, we may, without loss of generality, assume $n\geq 1$. We begin by fixing some notation for $n+m$ simplices in $\Delta^n\times \Delta^m$. Note that $n+m$ is the maximal dimension of a non-degenerate simplex. We begin by defining a set $\scr{Z}$ to contain pairs $(\vec{a},\vec{b})$ of vectors with $a_i\in [n]$ and $b_i\in [m]$ and $\on{length}(\vec{a})=\on{length}(\vec{b})$ (including the empty vector). We sometimes will view these pairs as a single string of numbers $(a_1,b_1,\ldots, a_k,b_k)$. The set $\scr{Z}$ will consist precisely of those pairs satisfying the following conditions:
		\begin{itemize}
			\item $a_{i-1}<a_i<n$
			\item $0<b_{i-1}<b_i$ 
		\end{itemize}  
		We can equip $\scr{Z}$ with the structure of a totally ordered by considering a sort of lexicographic order. 
		Setting 
		\[
		A=(a_1,b_1,\ldots, a_k,b_k) \qquad \text{and} \qquad C=(c_1,d_1,\ldots,c_\ell,d_\ell)
		\]
		We say that 
		\begin{itemize}
			\item If there exists a $j$ such that $a_i=c_i$ and $b_i=d_i$ for all $i<j$, and $a_j<c_j$, we say that $A>C$ 
			\item If there exists a $j$ such that $a_i=c_i$ for all $i\leq j$ and $b_i=d_i$ for all $i<j$, and $b_j>d_j$, then $A>C$.
			\item If, for instance, $j>k$, then we set the convention that $a_j=n$ and $b_j=m$. 
		\end{itemize} 
		which yields the desired linear order. So, for instance, we have 
		\[
		(0,1,3,2) < (0,1,2,2) < (0,2,1,4)
		\]
		with our chosen order.
		
		We aim to interpret pairs $(\vec{a},\vec{b})\in \scr{Z}$ as the `turning points' of a $n+m$-simplex in $\Delta^n\times \Delta^m$. We will define a simplex 
		\[
		\func{
			\sigma_{(\vec{a},\vec{b})}:{[n+m]} \to {[n]\times [m]}.
		}
		\]
		Set $k:=\on{length}(\vec{a})=\on{length}(\vec{a})$. We then define $\sigma_{(\vec{a},\vec{b})}$ as a map of posets by 
		\[
		\sigma_{(\vec{a},\vec{b})}(\ell):= \begin{cases}
		\ell 0 & \ell\leq a_1\\
		a_r(\ell-a_r) & a_r+b_{r-1}<\ell\leq a_r+b_r\\
		(\ell-b_r) b_r & a_r+b_r<\ell \leq a_{r+1}+b_r\\
		\end{cases}
		\]
		with the convention that $a_{k+1}=n$ and $b_{k+1}=m$. 
		In the special case of the empty string in $\scr{Z}$, we define
		\[
		\sigma_\emptyset(\ell)=\begin{cases}
		\ell 0 & \ell\leq n \\
		n (\ell-n) & \ell >n
		\end{cases}
		\]
		This can be viewed as a path in the grid above which starts out moving downward, turns right at $a_10$, turns downward at $a_1b_1$, and so on, until it finishes by going right from $n b_k$ to $nm$. Note that if $a_1=0$ the first step is trivial, and if $b_k=m$, the last step is trivial. See \autoref{fig:PathsInproduct} for some examples of vectors and the corresponding paths.
		
		We now define a filtration of $\Delta^n \times (\Delta^m\coprod_{\Delta^{\{0,1\}}} \Delta^0)$ indexed by $\scr{Z}$. Let $\phi$ denote the unique order-preserving bijection 
		\[
		\func{\phi:\{1,2,\ldots, ,|\scr{Z}|\}\to \scr{Z}}.
		\]
		We begin with 
		\[
		X_{0}=\partial\Delta^n\times\left(\Delta^m\coprod_{\Delta^{\{0,1\}}}\Delta^0\right)\coprod_{\partial \Delta^n \times \left(\Lambda^m_0\coprod_{\Delta^{\{0,1\}}}\Delta^0\right)}  \Delta^n\times  \left(\Lambda^m_0\coprod_{\Delta^{\{0,1\}}}\Delta^0\right)
		\]
		And define 
		\[
		X_i := X_{i-1}\cup \sigma_{\phi(i)} \subset \left(\Delta^m\coprod_{\Delta^{\{0,1\}}} \Delta^0\right). 
		\]
		In particular, we have 
		\[
		X_{|\scr{Z}|}=\left(\Delta^m\coprod_{\Delta^{\{0,1\}}} \Delta^0\right).
		\]
		
		We view the $X_i$ for $i>0$ as equipped with the marking and biscaling inherited from $X_{|\scr{Z}|}$. We will aim to show that the inclusions 
		\[
		\func{X_{i-1}\to  X_{i}}
		\]  
		are mb-anodyne. Since both the sources and targets of $f$ and $g$ carry the flat thin-scaling and flat marking, it will suffice for us to only consider the lean triangles. 
		
		We first note that the only simplices of $X_{|\scr{Z}|}$ which are \emph{not} in $X_0$ are those which \emph{both}:
		\begin{itemize}
			\item Contain an element of every column, with the possible exception of the $0$\textsuperscript{th} column. 
			\item Contain an element of every row. 
		\end{itemize}
		To fill these simplices, we will proceed by induction. 
		
		\vspace{1em}
		
		\noindent{\scshape Zeroth case:} The smallest string in $z$ is the empty string. The simplex $\sigma_\emptyset$ traces down the left-hand side of our grid, and then right along the bottom of the grid. The only missing $n+m-1$-simplex in $\sigma_\emptyset$ is subsimplex obtained by forgetting $n0$. However, the simplex $(n-1)0\to n0\to n1$ must be thin-scaled (since it lies in the Cartesian product of $\Delta^1\times \Delta^1$), and thus we obtain a pushout via a $\Lambda^{n+m}_n$-horn of type \ref{mb:innerhorn}. 
		
		\vspace{1em}
		
		\noindent{\scshape First Case:} Now suppose that $\phi(i)=(\vec{a},\vec{b})$ has $a_1\neq 0$. Consider the $n+m-1$ subsimplices of $\sigma_{(\vec{a},\vec{b})}$. Those of these simplices \emph{not} contained in $\sigma_{(\vec{a},\vec{b})}\cap X_{i-1}$ must be obtained from $\sigma_{(\vec{a},\vec{b})}$ by forgetting points of either the form $(a_r,b_{r-1})$ or the form $(a_r,b_r)$. 
		
		If we forget a simplex of the form $(a_r,b_r)$, we obtain a simplex contained in one of the following, previously filled, $n+m$-simplices. (once again, these cases use the convention that $a_{k+1}=n$ and $b_{k+1}=m$)
		When the removed vertex is of the form $(a_r,b_r)$, it is easy to check that the resulting sub-simplex is contained in a previous stage of the induction. Thus, we need only concern ourselves with the removal of vertices of the type $(a_r,b_{r-1})$.
		
		Note that there are precisely $k+1$ subsimplices of $\sigma_{(\vec{a},\vec{b})}$ which arise from forgetting such a vertex: 
		\[
		(a_1,0), (a_2,b_1),\ldots, (a_k,b_{k-1}),(n,b_k)
		\]
		Note, too, that the only other $\sigma_{(\vec{c},\vec{d})}$ containing these will have $(\vec{c},\vec{d})>(\vec{a},\vec{b})$. Each of the 2-simplices $\sigma_{(\vec{a},\vec{b})}(\{j-1,j,j+1\})$ where $\sigma_{(\vec{a},\vec{b})}(j)= (a_\ell,b_{\ell-1})$ will be contained in a copy of $\Delta^1\times \Delta^1$, and thus be biscaled by the product scaling. Thus, by \autoref{lem:indI}, we can add $\sigma_{(\vec{a},\vec{b})}$ with an \bS-anodyne pushout. Thus, $X_{i-1}\to X_i$ is \bS-anodyne.
		
		\vspace{1em} 
		\noindent{\scshape Second Case:}
		We now suppose that $\phi(i)=(\vec{a},\vec{b})$ has $a_1=0$. Considering the $n+m-1$-subsimplices of $\sigma_{(\vec{a},\vec{b})}$, we again see that those simplices not contained in $\sigma_{(\vec{a},\vec{b})}\cap X_{i-1}$ are obtained by forgetting the vertex $(0,0)$, a vertex of the form $(a_r,b_r)$, or a vertex of the form $(a_r,b_{r-1})$. 
		
		As in the first case, we see that those obtained by forgetting $(a_r,b_r)$ are in $\sigma_{(\vec{c},\vec{d})}$ for some $(\vec{c},\vec{d})<(\vec{a},\vec{b})$, and those of the other two forms are not. Note that, since $a_1=0$, the simplex $\sigma_{(\vec{a},\vec{b})}(0,1,n+m)$ is lean scaled (since $\sigma_{(\vec{a},\vec{b})}(0)=\sigma_{(\vec{a},\vec{b})}(1)$ once projected down to $\Delta^n$). Similarly, each of the 2-simplices $\sigma_{(\vec{a},\vec{b})}(\{j-1,j,j+1\})$ where $\sigma_{(\vec{a},\vec{b})}(j)= (a_\ell,b_{\ell-1})$ will be contained in a copy of $\Delta^1\times \Delta^1$, and thus be biscaled by the product scaling. Applying \autoref{lem:indII} then shows that we can fill $\sigma_{(\vec{a},\vec{b})}$ with an \bS-anodyne pushout, and so 
		\[
		\func{X_i\to X_{i+1}}
		\]
		is mb-anodyne. 
		
		The proof is now complete, as $f\wedge g$ is the composite of the morphisms 
		\[
		X_0\to X_1\to X_2\to \cdots \to X_{|\scr{Z}|},
		\]
		and thus, is \bS-anodyne.
	\end{proof}
	
	While a significant majority of the cases of the pushout-product remain, all of the remaining cases involve far less difficulty than this one. We can now turn to the main event.
	
	\begin{figure}
		\begin{center}
			\begin{tikzpicture}
			\foreach \x in {0,1,2,3} {
				\foreach \y in {0,1,2,3,4} {
					\pgfmathsetmacro\xr{\x*2}
					\pgfmathsetmacro\yr{(4-\y)*2}
					\path (\xr,\yr) node (A\x\y) {$\y\x$};	
				};
			};
			\foreach \x/\xr in {0/1,1/2}{
				\foreach \y in {0,1,2,3,4}{
					\draw[->] (A\x\y) to (A\xr\y);
				};
			};
			\foreach \y in {0,1,2,3,4}{
				\draw[->] (A2\y) to (A3\y);
			};
			\foreach \y/\yr in {0/1,1/2,2/3,3/4}{
				\foreach \x in {0,1,2,3} {
					\draw[->] (A\x\y) to (A\x\yr);
				}; 	
			};
			\draw[thick,blue] (A00) to (A10) to (A11) to (A12) to (A22) to (A32)to (A33) to (A34); 
			\draw[thick,red] (A00) to (A01) to (A11) to (A21)to (A22) to (A23) to (A24) to (A34);
			\end{tikzpicture}
		\end{center}
		
		\caption{Above, we depict in {\color{blue}blue} the simplex corresponding to the vectors $\vec{a}=(0,2)$ and $\vec{b}=(1,3)$ or, equivalently, to the string $(0,1,2,3)$. In {\color{red}red}, we give the simplex corresponding to the vectors $\vec{a}=(1)$ and $\vec{b}=(2)$ or, equivalently, to the string $(1,2)$. Note that, according to the ordering on $\scr{Z}$, we have $(1,2)<(0,1,2,3)$ (because 0<1).}
	\end{figure}
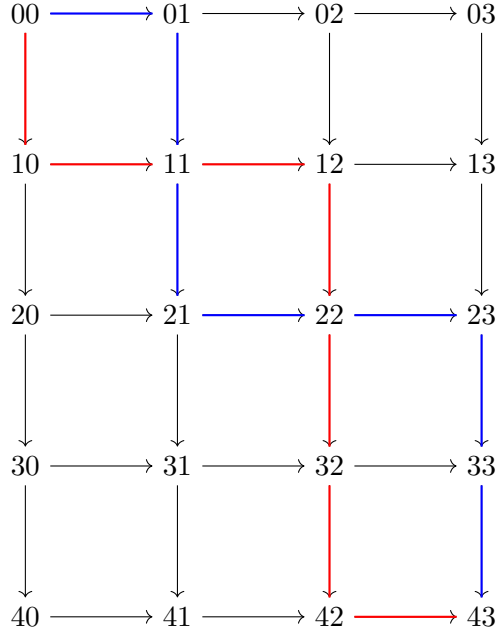
	
	\begin{proof}[Proof (of \autoref{prop:PP})]
		The proof will consist of the usual rigmarole --- checking on pairs of generators. While there are 44 cases in all, the vast majority of these turn out to be trivial or extremely simple. The two cases dealt with by the preceding propositions are by far the most complicated cases. 
		
		We will label our cases first by the generating cofibration, and then by the generating \bS-anodyne morphism. 
		\begin{itemize}
			\item[\ref{cof:bndry}] The cofibration is of the form $\Bigr(\partial \Delta^n,\flat,\flat\Bigl) \rightarrow \Bigr( \Delta^n,\flat,\flat\Bigl)$.
			\begin{itemize}
				\item[\ref{mb:innerhorn}] Since the marking is trivial, and the thin and lean scalings agree, we can consider only the thin scalings. Case (1A) from 3.1.8 in \cite{LurieGoodwillie} then shows that this can be obtained as a pushout of morphisms of type \ref{mb:innerhorn} and morphisms for the type from remark 3.1.4 in \cite{LurieGoodwillie}. 
				\item[\ref{mb:wonky4}] This is precisely case (1B) from 3.1.8 \cite{LurieGoodwillie} 
				\item[\ref{mb:leftdeglefthorn}] This is \autoref{prop:nightmare}.
				\item[\ref{mb:2Cartesianmorphs}]  The dual of the argument given for \autoref{prop:nightmare} suffices once we have replaced "degenerate 1-simplices" with "marked 1-simplices".
				\item[\ref{mb:2CartliftsExist}] We note that the map of underlying simplicial sets is 
				\[
				Y_0:= (\Delta^n \times \{1\})\coprod_{\partial \Delta^n \times \{1\}} (\partial\Delta^n\times \Delta^1) \to \Delta^n \times \Delta^1 
				\]
				We can define a sequence of $n+1$ simplices in $\Delta^n\times \Delta^1$ via the maps 
				\[
				\func*{
					\sigma_k: {[n+1]}\to {[n]\times [1]};
					i \mapsto \begin{cases}
					(i,0) & i\leq k\\
					(i-1,1) & i>k
					\end{cases}
				}
				\]
				We then define $Y_i$ inductively as $Y_{i-1}\cup \sigma_{i-1}$ (Following HTT 2.1.2.6). We see that the morphism $\func{Y_{i-1}\to Y_i}$ is a pushout with a $\Lambda^{n+1}_{i+1}$-horn. It will thus suffice for us to note two things: 
				\begin{itemize}
					\item When $i<n$, the 2-simplex $\sigma_{i}|_{\Delta^{\{i-1,i,i+1\}}}$ is the simplex 
					\[
					(i,0) \to (i,1) \to (i+1,1)
					\]
					in $\Delta^{\{i-1,i\}}\times \Delta^1$, and thus is necessarily thin-scaled. We thus obtain a pushout of type \ref{mb:innerhorn}. 
					\item when $i=n$, the 2-simplex $\sigma_{n}|_{\Delta^{\{0,n-1,n\}}}$ is 
					the simplex 
					\[
					(0,0) \to (n,0) \to (n,1)
					\]
					in $\Delta^{\{0,n\}}\times \Delta^1$, and thus is necessarily thin-scaled. Moreover, the morphism $\sigma_{n+1}|_{\Delta^{\{n-1,n\}}}$ is 
					\[
					(n,0)\to (n,1)
					\]
					and thus is marked. Hence, we obtain a pushout of type \ref{mb:2Cartesianmorphs}.
				\end{itemize} 
				\item[\ref{mb:composeacrossthin}] This is an isomorphism when $n\geq 1$, and is a morphism of type \ref{mb:composeacrossthin} when $n=0$.
				\item[\ref{mb:coCartoverThin}] This is an isomorphism on underlying marked lean scaled simplicial sets, and thus in the saturated hull of morphisms of type \ref{mb:coCartoverThin}.
				\item[\ref{mb:innersaturation}] We will treat the case $i=2$ --- the case $i=1$ follows virtually identically. When $n>2$ this is an isomorphism and when $n=0$, this is a morphism of type \ref{mb:innersaturation}. This means that we may consider the following two cases: 
				\begin{itemize}
					\item If $n=2$, we note that this is an isomorphism on the underlying marked simplicial sets, and indeed differs only in the lean-scaling. The only missing lean-scaled simplex is $00\to 11\to 23$ in $\Delta^2\times \Delta^3$. We may expand this to a 3-simplex $00\to 11\to 12\to 23$. It is easily checked that this 3-simplex gives us a pushout of type \ref{mb:innersaturation} (with $i=1$), showing that the morphism is \bS-anodyne.
					\item If $n=1$, we again have that the source and target differ only in their lean-scaling. It is easy to check that the missing simplices are the simplices $00\to 11\to 13$ and $00\to 01\to 13$ in $\Delta^1\times \Delta^3$. In the former case, we can extend to the 3-simplex $00\to 11\to 12\to 13$ and scale the desired 2-simplex with a pushout of type \ref{mb:innersaturation}, and in the latter case we can extend to the 3-simplex $00\to 01\to 02\to 13$ and scaled the desired 2-simplex with a pushout of type \ref{mb:innersaturation}.   
				\end{itemize}
				\item[\ref{mb:dualcocart2of3}] This is formally dual to the next case.
				\item[\ref{mb:coCart2of3}] When $n\geq 2$, this is an isomorphism. When $n=0$, this is a morphism of type \ref{mb:coCart2of3}. When $n=1$, we get the identity on underlying marked simplicial sets 
				\[
				(\Delta^3)^\dagger\times(\Delta^1)^\flat  \to (\Delta^3)^\dagger\times(\Delta^1)^\flat 
				\]
				The lean scaling on the target is maximal. The missing scaled simplices in the source are $00\to 10\to 21$, $00\to 11\to 21$. One can then note that the 3-simplex $00\to 11\to 21\to 31$ is of type \ref{mb:coCart2of3}, and can thus be filled. Similarly, the 3-simplex $00\to 10\to 21\to 31$ is of type \ref{mb:coCart2of3}, and can be filled. 
				\item[\ref{mb:equivalences}] If $n\geq 1$, this is an isomorphism. If $n=0$, this is again a morphism of type \ref{mb:equivalences}.
			\end{itemize}
			\item[\ref{cof:marked1}] The cofibration is of the form $\Bigr(\Delta^1,\flat,\flat \Bigl) \rightarrow \Bigr(\Delta^1,\sharp,\flat \Bigl) $.
			\begin{itemize}
				\item[\ref{mb:innerhorn}]  This is isomorphism on underlying marked, lean-scaled simplicial sets, and thus \bS-anodyne. 
				\item[\ref{mb:wonky4}] This is an isomorphism.
				\item[\ref{mb:leftdeglefthorn}] This is an isomorphism. 
				\item[\ref{mb:2Cartesianmorphs}] This is an isomorphism. 
				\item[\ref{mb:2CartliftsExist}] This gives us the inclusion 
				\[
				\func{
					(\Delta^1\times \Delta^1)^\dagger_{\sharp\subset \sharp} \to (\Delta^1\times \Delta^1)^\sharp_{\sharp\subset\sharp}
				}
				\] 
				Where $\dagger$ is the marking containing $\Delta^1\times \{0\}$, $\Delta^1\times \{1\}$, and $\{1\}\times \Delta^1$. A pushout of type \ref{mb:composeacrossthin} marks the diagonal, and a pushout by the morphism
				\[
				\Bigr(\Delta^2, \{\Delta^{\{1,2\}},\Delta^{\{0,2\}}\},\sharp\Bigl) \to \Bigr(\Delta^2,\sharp,\sharp\Bigl)
				\]
				marks the remaining edge. By  \autoref{lem:2Cart2of3}, this is \bS-anodyne.
				\item[\ref{mb:composeacrossthin}] This is the identity on  $(\Delta^2\times \Delta^1)_\sharp$ on underlying biscaled simplicial sets. The only 1-simplex which is not marked in the source is $00\to 21$, and the target is maximally marked. We can add the remaining marked edge using a pushout of type \ref{mb:composeacrossthin}.
				\item[\ref{mb:coCartoverThin}] This is an isomorphism. 
				\item[\ref{mb:innersaturation}] This is an isomorphism.
				\item[\ref{mb:dualcocart2of3}] This is an isomorphism. 
				\item[\ref{mb:coCart2of3}] This is an isomorphism. 
				\item[\ref{mb:equivalences}] The source and target of the pushout-product differ only in their marking. However, every edge which is marked in the target by not the source will be the product of a non-degenerate edge in $K$ and the non-degenerate edge in $\Delta^1$. Consequently, it will be the diagonal in a square $\Delta^1\times \Delta^1\subset \Delta^1\times K$. Since every other 1-simplex of this square will be marked, the diagonal can be marked with a pushout of type \ref{mb:composeacrossthin}.
			\end{itemize}
			\item[\ref{cof:coCart}] The cofibration is of the form $\Bigr(\Delta^2,\flat,\flat\Bigl) \rightarrow \Bigr(\Delta^2,\flat,\flat \subset \sharp)$.
			\begin{itemize}
				\item[\ref{mb:innerhorn}] When $n>2$, this is an isomorphism. If $n=2$, this is an isomorphism on the underlying marked thin-scaled simplicial sets, so we can consider only the lean scaling. 
				
				The target is maximally lean scaled. In the source, there are precisely three 2-simplices which are not lean scaled: 
				\begin{align}
				00 &\to 12\to 22\\
				00 &\to 11 \to 22\\
				00 & \to 10 \to 22
				\end{align}
				For the first, we can extend to the 3-simplex $00\to 02\to 12\to 22$, and obtain obtain a pushout of type \ref{mb:innersaturation} with $i=1$. For the third, we can extend to the 3-simplex $00\to 10\to 20\to 22$, and obtain a pushout of type \ref{mb:innersaturation} with $i=2$. For the second, we can then extend to the 3-simplex $00\to 10\to 11\to 22$, and obtain a pushout of type \ref{mb:innersaturation} (with $i=1$).
				\item[\ref{mb:wonky4}] The pushout-product is an isomorphism on underlying marked thin-scaled simplicial sets, so once again we consider the lean triangles. The underlying simplicial sets are both $\Delta^2\times \Delta^4$. There are two triangles which are lean in the target, but not the source, namely:
				\begin{align}
				00 & \to 13\to 24\\
				00 & \to 11 \to 24
				\end{align}
				For (4), if we extend to the 3-simplex $00\to 03\to 13\to 24$, we obtain a pushout of type \ref{mb:innersaturation} with $i=1$. For (5), if we extend to the 3-simplex $00\to 11\to 21 \to 24$, we obtain a pushout of type \ref{mb:innersaturation} with $i=2$. 
				\item[\ref{mb:leftdeglefthorn}] This is an isomorphism when $n>2$. When $n=2$, we first note that we can neglect the thin scaling and the marking. Since this is the case, we consider the corresponding inclusion of lean-scaled simplicial sets. The underlying map is 
				\[
				\func{\id: \Delta^2\times (\Delta^2\coprod\Delta^0) \to \Delta^2\times (\Delta^2\coprod\Delta^0) }
				\]
				and the target carries a maximal scaling. The only unscaled simplex in the source is 
				\[
				\func{00\to 11 \to 22}
				\] 
				We can then consider the simplex 
				\[
				00 \to 01\to 11 \to 22
				\]
				Since $00\to 01$ is degenerate, we can scale the remaining simplex via a pushout of type \ref{mb:dualcocart2of3}. 
				\item[\ref{mb:2Cartesianmorphs}] This is an isomorphism when $n>2$. When $n=2$, we again note that is sufficient only to consider the marking and the lean scaling. In this case, we obtain an isomorphism on the underlying simplicial set $\Delta^2\times \Delta^2$. The markings are identical on the source and target, so we are again left to consider only the lean scaling. The target is maximally scaled, and the only unscaled simplex in the source is $00\to 11\to 22$. Considering the 3-simplex 
				\[
				00 \to 11 \to 21\to 22,
				\]  
				we note that $21\to 22$ is marked. Thus, a pushout of type \ref{mb:coCart2of3} suffices.
				\item[\ref{mb:2CartliftsExist}] 
				The underlying map of simplicial sets is the identity on $\Delta^2\times \Delta^1$. It is, as above, and isomorphism on the marking and thin-scaling. There are precisely three simplices which we need to lean-scale: 
				\begin{align}
				00&\to 11\to 21\\
				00&\to 10\to 21 \\
				00&\to 10\to 20
				\end{align}
				For (6), we can extend to the 3-simplex $00\to 01\to 11\to 21$, and then obtain the desired scaling via a pushout of type \ref{mb:innersaturation} with $i=1$. For (7), we can extend to the 3-simplex $00\to 10\to 11 \to 21$ and obtain the desired scaling via a pushout of type \ref{mb:innersaturation} with $i=2$. Finally, for (8), we can extend to the 3-simplex $00\to 10\to 20\to 21$, and obtain a pushout of type \ref{mb:coCart2of3} (since the morphism $20\to 21$ is marked).
				
				\item[\ref{mb:composeacrossthin}] This is an isomorphism.
				\item[\ref{mb:coCartoverThin}] This is an isomorphism on the underlying marked lean-scaled simplicial sets, and thus a sequence of pushouts of type \ref{mb:coCartoverThin}.
				\item[\ref{mb:innersaturation}] In both cases, the underlying map of simplicial sets is the identity on $\Delta^2\times \Delta^3$, and in both cases, there is only one 2-simplex we need to lean scale.
				\begin{itemize}
					\item When $i=2$, the missing scaling is on $00\to 11\to 23$. We can extend to the 3-simplex $00\to 11\to 21\to 23$, and scale the missing 2-simplex using a pushout of type \ref{mb:innersaturation} with $i=2$. 
					\item When $i=1$, the missing scaling is on $00\to 12\to 23$. We can extend to the 3-simplex $00\to 02\to 12\to 23$, and scale the missing 2-simplex using a pushout of type \ref{mb:innersaturation} with $i=1$.
				\end{itemize}
				\item[\ref{mb:dualcocart2of3}] This is effectively dual to the next case.
				\item[\ref{mb:coCart2of3}] On underlying marked simplicial sets, this is the identity on the marked simplicial set 
				\[
				(\Delta^3,\{\Delta^{\{2,3\}}\}) \times (\Delta^2)^\flat
				\]
				The only simplex which is lean-scaled in the target but not the source is $00\to 11\to 22$. However, if we consider the 3-simplex 
				\[
				00\to 11\to 22\to 32
				\]
				in $\Delta^3\times \Delta^2$, we obtain a pushout of type \ref{mb:coCart2of3} giving the desired scaling.
				\item[\ref{mb:equivalences}] This is an isomorphism. 
			\end{itemize}
			\item[\ref{cof:thin}] The cofibration is of the form $\Bigl(\Delta^2,\flat,\flat \subset \sharp \Bigr) \rightarrow \Bigl( \Delta^2,\flat,\sharp\Bigr)$.
			\begin{itemize}
				\item[\ref{mb:innerhorn}-\ref{mb:equivalences}] All of these are, necessarily, isomorphisms on the underlying marked lean-scaled simplicial sets (since, forgetting about thin simplices, the morphisms of type 4 are isomorphisms of marked lean-scaled simplicial sets), and thus are \bS-anodyne.  \qedhere 
			\end{itemize}
		\end{itemize}
	\end{proof}
	
	Though the preceding arguments may seem an abuse of the reader's patience, now that the pushout-product is established, we can freely use it without directly working with these technicalities. In particular, we gain access to well-behaved mapping spaces, mapping categories, and mapping bicategories for $(\mbsSet)_{/S}$ --- a key convenience in the work to come.

\begin{definition}\label{def:mappingbicats}
  Given two mb simplicial sets $(K,E_K,T_K \subseteq C_K), (X,E_X,T_X \subseteq C_X)$ we define  another mb simplicial set denoted by $\on{Fun}^{\mathbf{mb}}(K,X)$ and characterized by the following universal property
  \[
    \on{Hom}_{\mbsSet}\Bigr(A,\on{Fun}^{\mathbf{mb}}(K,X) \Bigl)\isom \Hom_{\mbsSet}\Bigr(A \times K,X  \Bigl).
  \]
\end{definition}

As a direct consequence of \autoref{prop:PP} we obtain the following corollary.

\begin{corollary}\label{cor:bsfibfun}
  Let $f:(X,E_X,T_X \subseteq C_X) \to (Y,E_Y,T_Y \subseteq C_Y)$ be a \bS-fibration. Then for every $K \in \mbsSet$ the induced morphism $\on{Fun}^{\mathbf{mb}}(K,X) \to \on{Fun}^{\mathbf{mb}}(K,Y)$ is a \bS-fibration.
\end{corollary}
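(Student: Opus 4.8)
The plan is to reduce the statement to the pushout-product axiom of \autoref{prop:PP} by transposing lifting problems along the internal-hom adjunction of \autoref{def:mappingbicats}. Write $f_\ast := \on{Fun}^{\mathbf{mb}}(K,f)$. Since a \bS-fibration is by definition a map with the right lifting property against the class of \bS-anodyne morphisms, it suffices to show that $f_\ast$ lifts against an arbitrary \bS-anodyne morphism $i\colon A \to B$. So first I would fix such an $i$ together with a lifting problem
\[
\begin{tikzcd}[ampersand replacement=\&]
A \arrow[r] \arrow[d, "i" swap] \& \on{Fun}^{\mathbf{mb}}(K,X) \arrow[d, "f_\ast"] \\
B \arrow[r] \& \on{Fun}^{\mathbf{mb}}(K,Y)
\end{tikzcd}
\]
and transpose it. The natural isomorphism $\on{Hom}_{\mbsSet}(A,\on{Fun}^{\mathbf{mb}}(K,X))\isom \on{Hom}_{\mbsSet}(A\times K,X)$ of \autoref{def:mappingbicats}, applied to all four corners and to the diagonal, identifies the square above (together with its potential fillers $B\to \on{Fun}^{\mathbf{mb}}(K,X)$) with the square
\[
\begin{tikzcd}[ampersand replacement=\&]
A\times K \arrow[r] \arrow[d, "i\times \id_K" swap] \& X \arrow[d, "f"] \\
B\times K \arrow[r] \& Y
\end{tikzcd}
\]
(together with its potential fillers $B\times K\to X$). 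Hence $f_\ast$ has the right lifting property against $i$ if and only if $f$ has the right lifting property against $i\times\id_K$; here naturality of the adjunction isomorphism is exactly what guarantees that this correspondence respects all the marking and biscaling data, not merely the underlying simplicial sets.

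Second, I would recognize $i\times\id_K$ as a pushout-product. Since $\emptyset\times Z=\emptyset$ for every mb simplicial set $Z$, the pushout-product $i\wedge(\emptyset\to K)$ has source $A\times K\coprod_{A\times\emptyset}B\times\emptyset = A\times K$ and target $B\times K$, and the induced map is precisely $i\times\id_K$. The map $\emptyset\to K$ is a monomorphism of underlying simplicial sets and therefore a cofibration, while $i$ is \bS-anodyne by assumption. Thus \autoref{prop:PP} applies and shows that $i\times\id_K$ is \bS-anodyne. Since $f$ is a \bS-fibration, it has the right lifting property against the \bS-anodyne map $i\times\id_K$, so by the transposition above $f_\ast$ lifts against $i$. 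As $i$ was an arbitrary \bS-anodyne morphism, $f_\ast$ is a \bS-fibration.

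The argument is entirely formal, and I expect no genuine obstacle: all of the real work has already been absorbed into \autoref{prop:PP}. The only point requiring a moment of care is the decorated transposition in the first step, namely verifying that the adjunction isomorphism of \autoref{def:mappingbicats} matches the two lifting problems as \emph{marked biscaled} simplicial sets rather than only on underlying simplicial sets; this follows from the naturality of that isomorphism in the first variable, which ensures compatibility with $i$ and with the decorations of $X$ and $Y$.
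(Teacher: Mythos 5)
Your proof is correct and is exactly the argument the paper intends: the paper states the corollary as a direct consequence of \autoref{prop:PP}, and your transposition of the lifting problem along the adjunction of \autoref{def:mappingbicats}, together with the identification of $i\times\id_K$ as the pushout-product of $i$ with the cofibration $\emptyset\to K$, is the standard way to make that deduction explicit.
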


\begin{definition}
  Let $f: X \to Y$ be a \bS-fibration and consider another map of mb simplicial sets $g:K \to Y$. We define an $\infty$-bicategory $\on{Map}_Y(K,X)$ by means of the pullback square
  \[
    \begin{tikzcd}[ampersand replacement=\&]
      \on{Map}_Y(K,X) \arrow[r] \arrow[d] \& \on{Fun}^{\mathbf{mb}}(K,X) \arrow[d] \\
      \Delta^0 \arrow[r,"g"] \& \on{Fun}^{\mathbf{mb}}(K,Y)
    \end{tikzcd}
  \]
\end{definition}

\begin{proposition}\label{prop:mappingcofbs}
  Let $f: X \to Y$ be a \bS-fibration. Suppose that we are given morphisms of mb simplicial sets 
  \[
    \func{L \to[h] K \to[g] Y}
  \]
  such that $h$ is a cofibration (resp. \bS-anodyne). Then the induced morphism
  \[
    \func{h^*:\on{Map}_Y(K,X) \to \on{Map}_Y(L,X)}
  \]
  is a fibration of scaled simplicial sets (resp. trivial fibration).
\end{proposition}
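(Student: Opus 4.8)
The plan is to reduce both assertions to the pushout-product property \autoref{prop:PP} by an adjunction argument, the only genuinely new ingredient being that the left adjoint $L$ of \autoref{rem:forgetfuladj} carries scaled anodyne maps to \bS-anodyne maps. Throughout I regard $\mathcal M_K:=\on{Map}_Y(K,X)$ as a scaled simplicial set via the forgetful functor $U$; it is one by \autoref{cor:bsfibfun} together with \autoref{lem:overthepoint}. First I would record the adjunction powering the whole argument: combining $L\dashv U$ with the internal-hom adjunction defining $\on{Fun}^{\mathbf{mb}}$ and the pullback defining $\on{Map}_Y$ yields, for every scaled simplicial set $Z$, a natural bijection
\[
\Hom_{\Set^{\mathbf{sc}}_\Delta}(Z,\mathcal M_K)\isom \Hom_{\mbsSet/Y}\bigl(L(Z)\times K,\,X\bigr),
\]
where $X\to Y$ is $f$ and $L(Z)\times K\to Y$ is $g\circ\pi_K$; under this bijection $h^*$ corresponds to restriction along $\id_{L(Z)}\times h$.

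Next I transpose lifting problems. For the fibration statement I test $h^*$ against a scaled anodyne generator $u\colon A\to B$ (a map has the right lifting property against all scaled anodynes precisely when it is a fibration in the model structure of \cite{LurieGoodwillie}). Applying the bijection to both $A$ and $B$ and incorporating the commutativity constraint, the lifting problem for $h^*$ against $u$ becomes exactly a lifting problem for $f$ against the pushout-product $L(u)\wedge h$, namely against
\[
L(A)\times K\coprod_{L(A)\times L}L(B)\times L\to L(B)\times K.
\]
Since $f$ is a \bS-fibration, it suffices to show $L(u)\wedge h$ is \bS-anodyne; by \autoref{prop:PP} this follows once $L(u)$ is \bS-anodyne and $h$ is a cofibration. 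Dually, for the trivial-fibration statement I test $h^*$ against a generating cofibration $c$ of scaled simplicial sets; the same transposition turns this into a lifting problem for $f$ against $L(c)\wedge h$, and as $L(c)$ is plainly a monomorphism on underlying simplicial sets (hence a cofibration) while $h$ is now \bS-anodyne, \autoref{prop:PP} again yields that $L(c)\wedge h$ is \bS-anodyne.

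It therefore remains to prove the key sublemma, that $L$ sends scaled anodyne maps to \bS-anodyne maps. Because $L$ is a left adjoint it preserves pushouts, transfinite composites, and retracts, so it is enough to treat the three generators of \autoref{def:scanodyne}, recalling that $L(W,T_W)=(W,\flat,T_W)$ has thin $=$ lean $=T_W$. The inner-horn generator (i) maps to a generator of type \ref{mb:innerhorn} and generator (ii) to one of type \ref{mb:wonky4}, so both are immediate. The main obstacle is the outer generator (iii): applying $L$ yields the horn inclusion with $\Delta^{\{0,1,n\}}$ scaled \emph{both} thin and lean, whereas the available generator \ref{mb:leftdeglefthorn} has this triangle lean only. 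I plan to resolve this with a single pushout. The square whose top edge is the type-\ref{mb:leftdeglefthorn} map (triangle lean only) and whose two vertical edges are the maps additionally scaling $\Delta^{\{0,1,n\}}$ thin---each a pushout of \ref{cof:thin}---is cocartesian, since source and target differ precisely by ``filling the horn'' and by ``making the triangle thin,'' and these two modifications are independent and affect disjoint data. Hence $L$ of (iii) is a pushout of a type-\ref{mb:leftdeglefthorn} map, so \bS-anodyne, which completes the sublemma and, with the transpositions above, the proposition.
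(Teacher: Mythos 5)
Your transposition framework is sound, and your sublemma that $L$ carries scaled anodyne maps to \bS-anodyne maps is correct (the pushout trick reconciling the thin-versus-lean discrepancy for generator (iii) works, since the source of a type \ref{mb:leftdeglefthorn} map already has $\Delta^{\{0,1,n\}}$ lean, so additionally declaring it thin is a legitimate pushout along a map of type \ref{cof:thin}). The trivial-fibration half of the statement does follow exactly as you describe, and this matches the paper, which disposes of that case in one line via \autoref{prop:PP}. The gap is in the parenthetical claim driving the fibration half: having the right lifting property against all scaled anodyne maps is \emph{not} equivalent to being a fibration in the model structure of \cite{LurieGoodwillie}. Scaled anodyne maps are trivial cofibrations, but they do not detect all fibrations: for example, the inclusion of a vertex into the maximally scaled nerve of the walking isomorphism has the right lifting property against every generating scaled anodyne map (each such lifting problem forces the bottom map to be constant), yet it is a non-surjective weak equivalence between fibrant objects and hence not a fibration. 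What your argument establishes is only that $h^*$ is a weak \sS-fibration.

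The missing ingredient is the isofibration condition, and it is precisely here that the marking --- which your argument discards by passing to $U$ before transposing --- does real work. By \autoref{cor:bsfibfun} and \autoref{lem:overthepoint}, the marked edges of $\on{Map}_Y(K,X)$ and $\on{Map}_Y(L,X)$ are exactly the equivalences; taking the pushout-product of the cofibration $h$ with a marked-edge-lifting generator of \bS (the paper invokes \ref{mb:2Cartesianmorphs} in the degenerate case $n=1$, i.e.\ the inclusion of the terminal vertex of a marked edge) and transposing shows that $h^*$ lifts marked, hence all, equivalences. Combined with the right lifting property against scaled anodynes and the characterization of fibrations between fibrant objects in the Cisinski presentation of the scaled model structure from \cite{GHL_equiv}, this yields that $h^*$ is a genuine fibration. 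Note that the full model-categorical strength is needed downstream (e.g.\ in \autoref{prop:leftproper}, where the strict pullback along $h^*$ must compute the homotopy pullback), so this step cannot be waved away.
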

\begin{proof}
  Suppose that $h$ is a cofibration. Then it follows from \autoref{prop:PP} that $h^{*}$ has the right lifting property against the class of scaled anodyne maps. Note that according to \autoref{lem:overthepoint} the marking on both $\infty$-bicategories is precisely given by equivalences. Therefore using \ref{mb:2Cartesianmorphs} in \autoref{def:mbsanodyne} for $n=1$ we see that $h^{*}$ is an isofibration. We can conclude from the construction of the model structure on $\Set_\Delta^{\mathbf{sc}}$ as a Cisinski model structure in \cite{GHL_equiv} that $h^{*}$ is a fibration of $\infty$-bicategories. The case where $h$ is a \bS-anodyne follows immediately from \autoref{prop:PP}.
\end{proof}
  
\subsection{The model structure}\label{subsec:MS}
Let $S \in \on{Set}^{\on{sc}}_{\Delta}$ for the rest of the section we will denote $(\mbsSet)_{/S}$ the category of mb simplicial set over $(S,\sharp,T_S \subset \sharp)$.

\begin{definition}\label{def:fibrantobjects}
  We say that an object $\pi:X \to S$ in $(\mbsSet)_{/S}$ is an \emph{outer} 2-\emph{Cartesian} fibration if it is a $\bS$-fibration.
\end{definition}

\begin{remark}
 We will frequently abuse notation and refer to outer 2-Cartesian as \emph{2-Cartesian fibrations}.
\end{remark}

\begin{definition}\label{def:underlyingmapping}
  Let $\pi:X \to S$ be a morphism of mb simplicial sets. Given an object $K\to S$, we define $\on{Map}^{\on{th}}_{S}(K,X)$ to be the  mb sub-simplicial set consisting only of the thin triangles. Note that if $\pi$ is a 2-Cartesian fibration this is precisely the underlying $\infty$-category of $\on{Map}_S(K,X)$. 
  
  We similarly denote by $\on{Map}^{\isom}_S(K,X)$ the mb sub-simplicial set consisting of thin triangles and marked edges. As before, we note that if $\pi$ is a 2-Cartesian fibration, the simplicial set $\on{Map}^{\isom}_S(K,X)$ can be identified with the maximal Kan complex in $\on{Map}_S(K,X)$.
\end{definition}

\begin{definition}
  We define a functor $\func{I: \on{Set}^+_{\Delta} \to \mbsSet}$ mapping a marked simplicial set $(K,E_K)$ to the mb simplicial set $(K,E_K,\sharp)$. If $K$ is maximally marked we adopt the notation $I^{+}(K^{\sharp})=K^{\sharp}_{\sharp}$
\end{definition}

\begin{remark}
  Note that we can endow the $(\mbsSet)_{/S}$ with the structure of a $\on{Set}_{\Delta}^{+}$-enriched category by means of $\on{Map}^{\on{th}}_{S}(\mathblank,\mathblank)$. In addition given $K \in \on{Set}_{\Delta}^+$ and $\pi:X \to S$ we define  $K \tensor X:= I(K) \times X$ equipped with a map to $S$ given by first projecting to $X$ and then composing with $\pi$. This construction shows that $(\mbsSet)_{/S}$ is tensored over $\on{Set}^+_{\Delta}$. One can easily show that $(\mbsSet)_{/S}$ is also cotensored over $\on{Set}^+_{\Delta}$.

  In a similar way one can use $\on{Map}^{\isom}_{S}(\mathblank,\mathblank)$ to endow $(\mbsSet)_{/S}$ with the structure of a $\on{Set}_{\Delta}$-enriched category. In this case the cotensor is given by $K \tensor X= I(K^{\sharp}) \times X$.
\end{remark}

\begin{definition}\label{def:weakequiv1}
  Let $\func{L \to[h] K \to[p] S}$ be a morphism in $(\mbsSet)_{/S}$. We say that $h$ is a cofibration when it is a monomorphism of simplicial sets. We will call $h$ a weak equivalence if for every 2-Cartesian fibration $\pi:X \to S$ the induced morphism
  \[
    \func{h^{*}:\on{Map}_S(K,X) \to \on{Map}_S(L,X)}
  \]
  is a bicategorical equivalence.
\end{definition}

\begin{definition}
	Given two mb simplicial sets $p:X\to S$ and $q:Y\to S$ over $S$, we call a morphism 
	\[
	\begin{tikzcd}
	(\Delta^1,\sharp,\sharp)\times X \arrow[rr,"h"] \arrow[rd,"p"'] &   & Y \arrow[ld,"q"] \\
	& S &             
	\end{tikzcd}
	\]
	a \emph{marked homotopy over $S$} from $h|_{\{0\}\times X}$ to $h|_{\{1\}\times X}$. We say that a morphism $f:X\to Y$ is a \emph{marked homotopy equivalence} if there is a morphism $g:Y\to X$ over $S$ and  marked homotopies from $f\circ g$ to $\id_Y$ and from $g\circ f$ to $\id_X$. 
\end{definition}

\begin{proposition}\label{prop:leftproper}
  Suppose we are given a pushout diagram in $(\mbsSet)_{/S}$
  \[
    \begin{tikzcd}[ampersand replacement=\&]
      L \arrow[d,"u"] \arrow[r,"v"] \& K \arrow[d] \\
      R \arrow[r,"w"] \& P
    \end{tikzcd}
  \]
where $u$ is a cofibration and $v$ is a weak equivalence. Then $w$ is also a weak equivalence.
\end{proposition}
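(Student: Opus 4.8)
The plan is to apply the contravariant mapping-object functor to turn the given pushout into a pullback, and then to invoke that base change of a weak equivalence along a fibration preserves weak equivalences between fibrant objects. Fix an arbitrary 2-Cartesian fibration $\pi:X\to S$; by \autoref{def:weakequiv1} it suffices to show that $w^{*}:\on{Map}_S(P,X)\to\on{Map}_S(R,X)$ is a bicategorical equivalence. First I would observe that $\on{Map}_S(-,X)$ sends the pushout to a pullback square
\[
\begin{tikzcd}[ampersand replacement=\&]
\on{Map}_S(P,X) \arrow[d,"w^{*}"'] \arrow[r] \& \on{Map}_S(K,X) \arrow[d,"v^{*}"] \\
\on{Map}_S(R,X) \arrow[r,"u^{*}"'] \& \on{Map}_S(L,X).
\end{tikzcd}
\]
The point is that $A\times(-)$ preserves colimits in $\mbsSet$ for every $A$ (it is a left adjoint, with right adjoint $\on{Fun}^{\mathbf{mb}}(A,-)$ from \autoref{def:mappingbicats}), so that $A\times P\cong (A\times R)\coprod_{A\times L}(A\times K)$; unwinding the universal property defining $\on{Map}_S(-,X)$ then identifies $\on{Map}_S(P,X)$ with the fibre product $\on{Map}_S(R,X)\times_{\on{Map}_S(L,X)}\on{Map}_S(K,X)$.

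Next I would record the two homotopical inputs on the legs of the cospan. As $u$ is a cofibration, \autoref{prop:mappingcofbs} makes $u^{*}:\on{Map}_S(R,X)\to\on{Map}_S(L,X)$ a fibration of scaled simplicial sets, and as $v$ is a weak equivalence, \autoref{def:weakequiv1} makes $v^{*}:\on{Map}_S(K,X)\to\on{Map}_S(L,X)$ a bicategorical equivalence. Moreover all four corners of the square are $\infty$-bicategories, hence fibrant objects of $\Set_\Delta^{\mathbf{sc}}$, by the very construction of the mapping objects. Since $w^{*}$ is precisely the base change of the bicategorical equivalence $v^{*}$ along the fibration $u^{*}$, I would then conclude that $w^{*}$ is a bicategorical equivalence by the standard fact that in any model category the base change of a weak equivalence between fibrant objects along a fibration is again a weak equivalence (e.g., via Ken Brown's category-of-fibrant-objects argument). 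As $X$ was an arbitrary 2-Cartesian fibration, this shows that $w$ is a weak equivalence.

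The genuinely load-bearing step is the last one. I want to emphasize that one should \emph{not} appeal to right properness of the scaled model structure (which is not available at this stage of the paper), but only to the unconditional fibrant-object version of the statement; pinning down the fibrancy hypotheses exactly is the crux of the argument. The only other point requiring care is the colimit-to-limit interchange: one must check that $A\times(-)$ commutes with the pushout and that the fibre cutting out $\on{Map}_S(-,X)$ inside $\on{Fun}^{\mathbf{mb}}(-,X)$ over the tautological point of $\on{Fun}^{\mathbf{mb}}(-,S)$ is compatible with this decomposition, so that the displayed square is genuinely a pullback. This is where cartesian closedness of $\mbsSet$ and the naturality of the structure maps to $S$ are used, and it is routine but worth stating carefully.
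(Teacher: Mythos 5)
Your proposal is correct and follows essentially the same route as the paper: apply $\on{Map}_S(-,X)$ to convert the pushout into a pullback of fibrant scaled simplicial sets, note that $u^*$ is a fibration by \autoref{prop:mappingcofbs} and $v^*$ is a bicategorical equivalence, and conclude that the strict pullback computes the homotopy pullback so $w^*$ is an equivalence. Your explicit justification of the colimit-to-limit interchange and the fibrant-objects version of right properness only spells out what the paper leaves implicit.
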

\begin{proof}
  Let $\pi:X \to Y$ be a 2-Cartesian fibration. Then it follows that we have a pullback diagram of fibrant scaled simplicial sets
  \[
    \begin{tikzcd}[ampersand replacement=\&]
      \on{Map}_S(P,X) \arrow[r,"w^*"] \arrow[d] \& \on{Map}_S(R,X) \arrow[d,"u^*"] \\
      \on{Map}_S(K,X) \arrow[r,"v^*"] \& \on{Map}_S(L,X)
    \end{tikzcd}
  \]
  where $u^*$ is a fibration according to \autoref{prop:mappingcofbs} and $v^*$ is a bicategorical equivalence. Since this pullback already represents the homotopy pullback it follows that $w^{*}$ is also a bicategorical equivalence.
\end{proof}

\begin{proposition}\label{prop:weakequivkan}
  Let $\func{L \to[h] K \to[p] S}$ be a morphism in $(\mbsSet)_{/S}$. Then the following are equivalent
  \begin{itemize}
     \myitem{i)}\label{equivs:WE} The map $h:L \to K$ is a weak equivalence.
     \myitem{ii)}\label{equivs:inftycat} For every 2-Cartesian fibration $\pi:X \to S$ the induced morphism
     \[
       \func{\on{Map}^{\on{th}}_S(K,X) \to[\isom] \on{Map}^{\on{th}}_S(L,X)}
     \]
     is an equivalence of $\infty$-categories.
     \myitem{iii)}\label{equivs:space}  For every 2-Cartesian fibration $\pi:X \to S$ the induced morphism
     \[
       \func{\on{Map}^{\isom}_S(K,X) \to[\isom] \on{Map}^{\isom}_S(L,X)}
     \]
     is a homotopy equivalence of Kan complexes.
   \end{itemize} 
\end{proposition}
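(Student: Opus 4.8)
The plan is to prove the cycle \ref{equivs:WE}$\Rightarrow$\ref{equivs:inftycat}$\Rightarrow$\ref{equivs:space}$\Rightarrow$\ref{equivs:WE}. The first two implications are \emph{pointwise} in the fibration $X$ and follow formally from the observation (\autoref{def:underlyingmapping}) that $\on{Map}^{\on{th}}_S(-,X)$ and $\on{Map}^{\isom}_S(-,X)$ extract, respectively, the underlying $\infty$-category and its maximal Kan complex from the $\infty$-bicategory $\on{Map}_S(-,X)$ (which is fibrant by \autoref{prop:mappingcofbs}). The real content is \ref{equivs:space}$\Rightarrow$\ref{equivs:WE}, which is emphatically \emph{not} pointwise: I would feed the hypothesis \ref{equivs:space} into an entire family of auxiliary 2-Cartesian fibrations obtained from a single $X$ by cotensoring, and then reassemble the resulting homotopy equivalences of Kan complexes into one bicategorical equivalence. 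This is precisely the step that exploits the quantifier over all 2-Cartesian fibrations.

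For \ref{equivs:WE}$\Rightarrow$\ref{equivs:inftycat}, fix a 2-Cartesian fibration $X$. By hypothesis $h^{*}\colon \on{Map}_S(K,X)\to \on{Map}_S(L,X)$ is a bicategorical equivalence, and passage to underlying $\infty$-categories carries bicategorical equivalences to categorical equivalences; since $\on{Map}^{\on{th}}_S(-,X)$ is exactly this underlying $\infty$-category, $h^{*}$ induces a categorical equivalence. For \ref{equivs:inftycat}$\Rightarrow$\ref{equivs:space}, the functor taking an $\infty$-category to its maximal Kan complex sends categorical equivalences to homotopy equivalences of Kan complexes, and $\on{Map}^{\isom}_S(-,X)$ is the maximal Kan complex of $\on{Map}^{\on{th}}_S(-,X)$ (the core of the underlying $\infty$-category coincides with the maximal Kan complex of the $\infty$-bicategory, as every invertible $2$-cell is thin); hence \ref{equivs:inftycat} restricts to the desired homotopy equivalence on cores.

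For \ref{equivs:space}$\Rightarrow$\ref{equivs:WE}, fix a 2-Cartesian fibration $\pi\colon X\to S$. Given any mb simplicial set $C$, I would form the cotensor
\[
  X^{C}:=\on{Fun}^{\mathbf{mb}}(C,X)\times_{\on{Fun}^{\mathbf{mb}}(C,S)}S,
\]
which by \autoref{cor:bsfibfun} together with the stability of \bS-fibrations under base change is again a 2-Cartesian fibration over $S$. The key computation is a natural identification
\[
  \on{Map}^{\isom}_S(K,X^{C})\;\isom\;\on{core}\,\on{Fun}^{\on{sc}}\!\bigl(|C|,\on{Map}_S(K,X)\bigr),
\]
where $|C|$ denotes the scaled simplicial set obtained from the underlying simplicial set of $C$ with its scaling, and the right-hand side is the maximal Kan complex of the scaled function complex. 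For $C=\Delta^0$ this recovers $\on{Map}^{\isom}_S(K,X)=\on{core}\,\on{Map}_S(K,X)$, and letting $C$ range over $L(Z)=(Z,\flat,T_Z)$ for scaled simplicial sets $(Z,T_Z)$ exhibits the full derived-mapping-space functor $\on{core}\,\on{Fun}^{\on{sc}}(-,\on{Map}_S(K,X))$ on the left-hand side.

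Finally I would invoke the Whitehead criterion for the simplicial model structure on $\Set_\Delta^{\mathbf{sc}}$ of \cite{LurieGoodwillie,GHL_equiv}: since every scaled simplicial set is cofibrant and $\on{Map}_S(K,X),\on{Map}_S(L,X)$ are fibrant, $h^{*}$ is a bicategorical equivalence if and only if $\on{core}\,\on{Fun}^{\on{sc}}(Z,h^{*})$ is a homotopy equivalence of Kan complexes for every $Z$ in a generating set of cells. Applying \ref{equivs:space} to each 2-Cartesian fibration $X^{L(Z)}$ and using the identification above shows that each such $\on{core}\,\on{Fun}^{\on{sc}}(Z,h^{*})$ is a homotopy equivalence, whence $h^{*}$ is a bicategorical equivalence and \ref{equivs:WE} holds. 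The hard part, and the only place demanding genuine bookkeeping, is the displayed identification: one must thread the marking and both scalings of $C$ through $\on{Fun}^{\mathbf{mb}}$ so that the maximal Kan complex of $X^{C}$ matches the core of the scaled functor bicategory — which in particular uses \autoref{lem:overthepoint} to guarantee that the marked edges of these mapping objects are precisely the equivalences.
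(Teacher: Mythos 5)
Your proposal is correct in substance, but the implication \ref{equivs:space}$\Rightarrow$\ref{equivs:WE} is argued by a genuinely different route than the paper's. The paper first uses the small object argument to replace $L\to S$ and $K\to S$ by $\bS$-anodyne-equivalent 2-Cartesian fibrations $F_L, F_K$ (with \autoref{prop:mappingcofbs} guaranteeing this loses nothing), and then runs the Yoneda argument \emph{inside} $(\mbsSet)_{/S}$: hypothesis \ref{equivs:space} applied to the two specific fibrations $L\to S$ and $K\to S$ produces, via surjectivity and injectivity on $\pi_0$ of $\on{Map}^{\isom}_S(-,L)$ and $\on{Map}^{\isom}_S(-,K)$, an explicit marked homotopy inverse $\gamma$ to $h$ over $S$, whence $h^*$ is an equivalence for every $X$. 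You instead keep $L,K$ arbitrary and move the Yoneda argument one level up, into $\Set_\Delta^{\mathbf{sc}}$, using the cotensor fibrations $X^{L(Z)}$ as test objects; this buys you a proof with no fibrant replacement of $L$ and $K$, at the cost of two imported ingredients: the adjunction identification $\on{Map}^{\isom}_S(K,X^{L(Z)})\isom\on{core}\on{Fun}^{\on{sc}}(Z,\on{Map}_S(K,X))$ (which does hold, using \autoref{lem:overthepoint} to see that for the fibrant bicategory $\on{Map}_S(K,X)$ the marked edges are the equivalences and thin equals lean, so the mb and sc functor objects have the same core), and a Whitehead-type statement for the bicategorical model structure. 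On the latter, one caution: checking $\on{core}\on{Fun}^{\on{sc}}(Z,h^*)$ only on a \emph{generating set of cells} is not by itself the Whitehead criterion; you either need to let $Z$ range over all scaled simplicial sets and run the two-sided-inverse argument with $Z=\on{Map}_S(L,X)$ and $Z=\on{Map}_S(K,X)$ (which your construction permits, since \ref{equivs:space} applies to $X^{L(Z)}$ for arbitrary $Z$), or supplement the cell-wise check with a homotopy-(co)limit decomposition. With that phrasing repaired, your argument goes through; note that when it is specialized in this way it essentially reproduces the paper's two-step $\pi_0$ argument, just transported along the cotensor adjunction.
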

\begin{proof}
 Using the small object argument we can factor the morphism $p$ (resp. $q=p \circ h$) 
  \[
    \func{K \to F_K \to S}
  \]
  where the first morphism is \bS-anodyne and the second is a 2-Cartesian fibration and similarly for $q$. Using \autoref{prop:mappingcofbs} we obtain for every 2-Cartesian fibration $\pi:X \to S$ a commutative diagram
  \[
    \begin{tikzcd}[ampersand replacement=\&]
      \on{Map}_S(F_K,X) \arrow[r] \arrow[d,"\isom"] \& \on{Map}_S(F_L,X) \arrow[d,"\isom"] \\
      \on{Map}_S(K,X)  \arrow[r] \& \on{Map}_S(L,X) 
    \end{tikzcd}
  \]
  where the horizontal morphisms are trivial fibrations of $\infty$-bicategories. Consequently, we can assume that both $p$ and $q$ are 2-Cartesian fibrations. The implications \ref{equivs:WE}$\implies$ \ref{equivs:inftycat}$\implies$ \ref{equivs:space} are obvious. Now let us assume that \ref{equivs:space} holds. Since we are assuming that both $p,q$ are 2-Cartesian fibrations we obtain a homotopy equivalence of Kan complexes
  \[
    \func{\on{Map}^{\isom}_S(K,L) \to[\isom] \on{Map}^{\isom}_S(L,L)}
  \]
  It follows that we have a morphism $\gamma:K \to L$ over $S$ and a homotopy (again over $S$) expressing $\gamma \circ h \sim \on{id}_q$. Observe that both  $h \circ \gamma$ and $\on{id}_p$ get mapped under
  \[
    \func{\on{Map}^{\isom}_S(K,K) \to[\isom] \on{Map}^{\isom}_S(L,K)}
  \]
  to equivalent objects. Using our hypothesis it follows that $h \circ \gamma \sim \on{id}_p$. The claim now follows easily.
\end{proof}

\begin{lemma}
	Let $\func{L \to[h] K \to[p] S}$ be a morphism in $(\mbsSet)_{/S}$ such that $p:K\to S$ and $p\circ h: L\to S$ are 2-Cartesian fibrations. Then the conditions \ref{equivs:WE}-\ref{equivs:space} in \autoref{prop:weakequivkan} are additionally equivalent to
	\begin{itemize}
		\myitem{iv)}\label{equiv:hmtpy} The morphism $f$ is a marked homotopy equivalence over $S$.  
	\end{itemize}
\end{lemma}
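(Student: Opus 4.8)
The plan is first to translate the notion of a marked homotopy over $S$ into the $\Set_\Delta$-enrichment supplied by $\on{Map}^{\isom}_S(\mathblank,\mathblank)$, and then to observe that the proof of \autoref{prop:weakequivkan} has, under the present fibrancy hypotheses, already produced a marked homotopy inverse. The crucial bookkeeping is the following. Since the tensor of this enrichment is $\Delta^1\otimes X = I((\Delta^1)^{\sharp})\times X = (\Delta^1,\sharp,\sharp)\times X$, the tensor--hom adjunction gives a natural bijection
\[
\Hom_{(\mbsSet)_{/S}}\bigl((\Delta^1,\sharp,\sharp)\times X,\, Y\bigr)\isom \Hom_{\Set_\Delta}\bigl(\Delta^1,\on{Map}^{\isom}_S(X,Y)\bigr),
\]
so a marked homotopy over $S$ from $h_0$ to $h_1$ is precisely an edge of $\on{Map}^{\isom}_S(X,Y)$ joining $h_0$ and $h_1$. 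In particular $h$ is a marked homotopy equivalence over $S$ exactly when its image becomes an isomorphism in the homotopy category of the $\Set_\Delta$-enriched category $(\mbsSet)_{/S}$; here we use that each $\on{Map}^{\isom}_S(A,X)$ is a Kan complex (the maximal Kan complex in the $\infty$-bicategory $\on{Map}_S(A,X)$) whenever $X$ is a $2$-Cartesian fibration, so that marked homotopy is an equivalence relation on morphisms.

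For \ref{equiv:hmtpy}$\implies$\ref{equivs:space}, I would fix a $2$-Cartesian fibration $\pi\colon X\to S$ and apply the contravariant enriched hom-functor $\on{Map}^{\isom}_S(\mathblank,X)$. As an enriched functor it carries the tensor by $\Delta^1$ in the source to the cotensor in the target, i.e.
\[
\on{Map}^{\isom}_S\bigl((\Delta^1,\sharp,\sharp)\times A,\,X\bigr)\isom \on{Fun}\bigl(\Delta^1,\on{Map}^{\isom}_S(A,X)\bigr),
\]
so a marked homotopy $h_0\sim h_1$ is sent to a simplicial homotopy $h_0^*\sim h_1^*$. Applying this to the two homotopies witnessing that $h$ is a marked homotopy equivalence (with inverse $g\colon K\to L$) shows that $h^*$ and $g^*$ are mutually inverse up to simplicial homotopy, so $h^*\colon \on{Map}^{\isom}_S(K,X)\to \on{Map}^{\isom}_S(L,X)$ is a homotopy equivalence of Kan complexes. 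This is exactly \ref{equivs:space}, whence \ref{equivs:WE}--\ref{equivs:space} all hold by \autoref{prop:weakequivkan}.

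For the converse I would assume \ref{equivs:space} (equivalently \ref{equivs:WE}) and reread the final paragraph of the proof of \autoref{prop:weakequivkan}, observing that its argument now runs \emph{without} the preliminary \bS-anodyne replacement: since $p\colon K\to S$ and $q=p\circ h\colon L\to S$ are already $2$-Cartesian fibrations, the objects $L$ and $K$ may be used directly as the target $X$. Taking $X=L$, the homotopy equivalence $\on{Map}^{\isom}_S(K,L)\xrightarrow{\isom}\on{Map}^{\isom}_S(L,L)$ produces a morphism $\gamma\colon K\to L$ over $S$ and a homotopy over $S$ exhibiting $\gamma\circ h\sim \id_L$; taking $X=K$ and using injectivity on $\pi_0$ of $h^*\colon \on{Map}^{\isom}_S(K,K)\to\on{Map}^{\isom}_S(L,K)$ then yields $h\circ\gamma\sim\id_K$. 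By the identification of the first paragraph these homotopies over $S$ are marked homotopies, so $\gamma$ is a marked homotopy inverse of $h$ and \ref{equiv:hmtpy} holds.

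The only genuine obstacle is the translation in the first paragraph: one must verify that the tensoring $(\Delta^1,\sharp,\sharp)\times(\mathblank)$ really does corepresent edges of $\on{Map}^{\isom}_S$, and that the homotopies extracted in the proof of \autoref{prop:weakequivkan} are homotopies over $S$ in the strict sense required here --- edges of $\on{Map}^{\isom}_S$, not merely of $\on{Map}^{\on{th}}_S$. Once this is pinned down, both implications are formal consequences of enriched category theory, the point being that the fibrancy of $K$ and $L$ removes the fibrant replacement that would otherwise prevent $\gamma$ from living on the nose over $S$.
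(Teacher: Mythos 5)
Your proposal is correct and follows exactly the route the paper takes: it proves \ref{equiv:hmtpy}$\iff$\ref{equivs:space} and then invokes \autoref{prop:weakequivkan}, which is what the paper's one-line proof means by ``purely formal.'' You have simply made the formality explicit --- identifying marked homotopies over $S$ with edges of $\on{Map}^{\isom}_S(\mathblank,\mathblank)$ via the tensor--hom adjunction and noting that the fibrancy of $K$ and $L$ lets the argument at the end of \autoref{prop:weakequivkan} run without fibrant replacement --- which is a faithful expansion of the intended argument.
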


\begin{proof}
	 The equivalence of \ref{equiv:hmtpy} and \ref{equivs:space} is purely formal, so the result follows from \autoref{prop:weakequivkan}
\end{proof}

\begin{definition}
  We say that a morphism $\func{L \to[h] K \to S}$ is a trivial fibration it it has the right lifting property against the class of cofibrations.
\end{definition}

\begin{remark}
  Observe that every trivial fibration is in particular a weak equivalence.
\end{remark}

\begin{proposition}\label{prop:trivfibfibres}
  Given a diagram of the form
  \[
    \begin{tikzcd}
     X \arrow[rr,"f"] \arrow[rd,"p"] &   & Y \arrow[ld,swap,"q"] \\
                        & S &             
    \end{tikzcd}
  \]
  where both $p$ and $q$ are 2-Cartesian fibrations. Then the following statements are equivalent:
  \begin{itemize}
    \item[i)] The map $f$ is a trivial fibration.
    \myitem{ii)}\label{item:forExtravaganza} The map $f$ has the right lifting property against \bS-anodyne maps and for every $s \in S$ the induced map on fibres $\func{f_s:X_s \to[\isom] Y_s}$ is a bicategorical equivalence.
  \end{itemize}
\end{proposition}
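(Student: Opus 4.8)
The plan is to prove the two implications separately; essentially all the content lies in \ref{item:forExtravaganza}$\,\Rightarrow\,$(i). For (i)$\,\Rightarrow\,$\ref{item:forExtravaganza} I would first record that each generating $\bS$-anodyne map of \autoref{def:mbsanodyne} is a monomorphism on underlying simplicial sets, hence a cofibration; as the cofibrations are weakly saturated, so is every $\bS$-anodyne map, and therefore a trivial fibration automatically lifts against the class of $\bS$-anodyne maps. For the fibrewise assertion I would use stability of trivial fibrations under pullback: writing $X_s=X\times_Y Y_s$, the map $f_s\colon X_s\to Y_s$ is the base change of $f$ along $Y_s\hookrightarrow Y$, hence again a trivial fibration of mb simplicial sets. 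Since the functor $L$ of \autoref{rem:forgetfuladj} sends the generating cofibrations of scaled simplicial sets to composites of the generators \ref{cof:bndry}--\ref{cof:thin}, it preserves cofibrations, so its right adjoint $U$ preserves trivial fibrations; thus $U(f_s)$ is a trivial fibration of scaled simplicial sets between the $\infty$-bicategories $X_s,Y_s$ (fibrant by \autoref{lem:overthepoint}), and such a map is a bicategorical equivalence.

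For \ref{item:forExtravaganza}$\,\Rightarrow\,$(i) it suffices to lift $f$ against each generating cofibration \ref{cof:bndry}--\ref{cof:thin}. The generator \ref{cof:thin} is literally the $\bS$-anodyne map \ref{mb:coCartoverThin}, so it is covered by hypothesis. Before treating the others I would strengthen the fibrewise input: because $f$ is a $\bS$-fibration, each $f_s$ is an isofibration of $\infty$-bicategories --- this is the $n=1$ instance of \ref{mb:2Cartesianmorphs}, exactly as in the proof of \autoref{prop:mappingcofbs}, which supplies the lifting of equivalences. An isofibration that is a bicategorical equivalence is a trivial fibration of scaled simplicial sets, so every $f_s$ has the right lifting property against all monomorphisms of scaled simplicial sets.

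The heart of the argument is to reduce each remaining lifting problem to one concentrated in a single fibre. Given a square against \ref{cof:bndry}, \ref{cof:marked1} or \ref{cof:coCart}, its lower edge provides a simplex $\bar v\colon\Delta^n\to S$; pulling $p$ and $q$ back along $\bar v$ replaces $S$ by $\Delta^n$ and $f$ by a $\bS$-fibration which is still a fibrewise bicategorical equivalence, so I may assume $S=\Delta^n$. Over this directed base I would use the $2$-Cartesian structure --- Cartesian lifts of $1$-morphisms (\ref{mb:2CartliftsExist}, \ref{mb:2Cartesianmorphs}) together with the composability and saturation generators (\ref{mb:composeacrossthin}--\ref{mb:coCart2of3}) --- to transport the data along the Cartesian edges lying over the edges of $\Delta^n$ into the fibre over a single vertex, where the problem is solved by the fibrewise trivial fibration of the previous paragraph. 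The decoration cases reduce, under this transport, to the statements that $f$ reflects markings and lean triangles: for \ref{cof:marked1} one compares a marked edge with a genuine $p$-Cartesian lift of the same base edge and uses that $f_s$ reflects equivalences; for \ref{cof:coCart} one passes to the left-degeneration of the triangle so that its image lies over a left-degenerate triangle, and concludes using \ref{mb:leftdeglefthorn} together with the fibrewise statement.

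The main obstacle is the boundary case \ref{cof:bndry}. Turning the informal ``Cartesian transport into a fibre'' into an honest induction on $n$ --- keeping track of which faces of $\Delta^n$ already land in the chosen fibre, and filling the remainder with inner- and outer-horn fillers of types \ref{mb:innerhorn}--\ref{mb:2Cartesianmorphs} --- is where all the combinatorial difficulty is concentrated; this amounts to a relative straightening statement for $2$-Cartesian fibrations over $\Delta^n$. I expect the bookkeeping to be governed by a filtration argument in the spirit of \autoref{prop:nightmare}, and I would organize the proof around such a filtration.
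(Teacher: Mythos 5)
Your overall strategy coincides with the paper's: handle (i)$\Rightarrow$\ref{item:forExtravaganza} by saturation and pullback, observe that each $f_s$ is a trivial fibration of scaled simplicial sets, dispose of \ref{cof:thin} via \ref{mb:coCartoverThin}, and detect markings and lean/thin triangles by transporting along Cartesian edges into a fibre (your sketch for \ref{cof:marked1} is essentially the paper's argument verbatim, including the final appeal to \ref{mb:composeacrossthin}). The problem is that the decisive step --- lifting against \ref{cof:bndry} --- is exactly the part you defer, and deferring it leaves the proof without its main content. The paper resolves it with a dedicated technical result, \autoref{lem:inductionextravaganzza}: given a lifting square over $\Delta^n$ with bottom simplex $\beta$, one builds an $(m+1)$-simplex extension $\theta$ of $\beta$ along a marked (Cartesian) edge, defined on an explicit simplicial subset $S^{m+1}_{\vec j}\subset\Delta^{m+1}$ (\autoref{defn:extravaganzadefn}), whose construction is itself a double induction requiring the auxiliary prisms $B^m_j=\Delta^m\coprod_{\Delta^{\{j+1\}}}(\Delta^1)^\sharp$ to be shown $\bS$-anodyne. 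Restricting $\theta$ to the face missing $j_\beta+2$ produces a \emph{new} lifting problem, and the induction that terminates is not on $n$ but on the invariant $\kappa_\beta$ (the dimension of the smallest simplex of $S$ through which $q\circ\beta$ factors), with a secondary induction on $j_\beta$: each application of the lemma either strictly decreases $\kappa$ or strictly increases $j$. None of this is a filtration of a product in the style of \autoref{prop:nightmare}; your proposed organization ("keep track of which faces already land in the fibre") does not obviously supply a terminating invariant, and without one the argument does not close.

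A secondary, smaller gap: for \ref{cof:coCart} you propose to pass to the left-degeneration and invoke \ref{mb:leftdeglefthorn}, but the paper instead reruns the \autoref{lem:inductionextravaganzza} mechanism to build a $3$-simplex $T$ all of whose triangles through the transporting edge are thin, reducing detection of a thin (or lean) triangle over a general base simplex to detection in a fibre; your route would still need an argument that $f$ reflects coCartesianness of left-degenerate triangles over a nondegenerate base triangle, which is again the same transport problem. In short: the plan is the right one and matches the paper, but the combinatorial engine that makes it run is named rather than built, and that engine is the bulk of the proof.
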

\begin{proof}
  The implication $i) \implies ii)$ is clear. Now suppose that $ii)$ holds. Then we immediately see that for every $s \in S$ the map $f_s$ is a trivial fibration of scaled simplicial sets. First we will show that we can lift the maps 
  \[
    \func{ \Bigr(\partial \Delta^n,\flat,\flat\Bigl) \to  \Bigr( \Delta^n,\flat,\flat\Bigl)} \quad, \quad n\geq 0.
  \]
  Suppose we are given a lifting problem of the form
  \[
    \begin{tikzcd}[ampersand replacement=\&]
      \partial \Delta^n \arrow[r,"\alpha"] \arrow[d] \& X \arrow[d,"f"] \\
      \Delta^n \arrow[r,"\beta"] \& Y
    \end{tikzcd}
  \]
  and let $\kappa_{\beta}$ be the smallest integer such that $q \circ \beta: \Delta^n \to \Delta^{\kappa_{\beta}} \to S$. We will use induction on $\kappa_{\beta}$. Note that when $\kappa_{\beta}=0$ the lifting problem occurs in one of the fibres and thus the solution exists. Suppose claim holds for $0<\kappa_{\beta} -1\leq n-1$. We will assume without loss of generality that $S=\Delta^{\kappa_{\beta}}$. Let us remark that by construction the map $r=q \circ \beta:\Delta^n \to \Delta^{\kappa_{\beta}}$ must be surjective. Let $j_{\beta}\in [n] $ be the biggest element such that $r(j_\beta) <r(n)=\kappa_{\beta}$. We can now use \autoref{lem:inductionextravaganzza} to produce a commutative diagram 
  \[
    \begin{tikzcd}[ampersand replacement=\&]
      S^{m+1}_{\vec{j}} \arrow[r,"\epsilon"] \arrow[d] \& X \arrow[d,"f"] \\
      \Delta^{n+1} \arrow[r,"\theta"] \& Y
    \end{tikzcd}
  \]
  satisfying the conditions of the lemma. It follows from the proof \autoref{lem:inductionextravaganzza} that the triangle $\theta(\set{j_{\beta},j_{\beta}+1,j_{\beta}+2})$ must be scaled. Restricting this diagram along the face missing $j_{\beta}+2$ yields another commutative square
  \[
    \begin{tikzcd}[ampersand replacement=\&]
      \partial \Delta^n \arrow[r] \arrow[d] \& X \arrow[d,"f"] \\
      \Delta^{n} \arrow[r,"\xi"] \& Y
    \end{tikzcd}
  \]
  It is immediate to see that our original lifting problem admits a solution if this later lifting problem admits a solution. We can further see that if $j_{\beta}=n-1$ then $\xi$ must factor through $\Delta^{\kappa_{\beta}-1}$ and the existence of the solution follows from the inductive hypothesis. If $j_{\beta}<n-1$ it follows that $q \circ \xi$ must be surjective and that $j_{\xi}>j_{\beta}$ so we can keep applying \autoref{lem:inductionextravaganzza} until we  obtain the solution. The inductive step is proved and the claim holds.

  To finish the proof we must show that $f$ detects marked edges and lean (resp. thin) triangles. Let $e:\Delta^1 \to X$ such that $f(e)$ is marked. Let us denote $e(i)=x_i$ for $i \in \set{0,1}$ and similarly denote $f(x_i)=y_i$. Pick a marked lift $\widetilde{e}:\hat{x}_0 \to x_1$ and observe that we can produce a 2-simplex $\sigma: \Delta^2 \to X$ such that $\restr{\sigma}{\Delta^{\set{1,2} }}=\widetilde{e}$ and $\restr{\sigma}{\Delta^{\set{0,2} }}=e$. It follows that $f(\sigma)$ is fully marked and since its restriction to $\Delta^{\set{0,1}}$ lies in $Y_{q(y_0)}$ that particular edge must be an equivalence. However $f$ detects equivalences in the fibres so it follows that $\restr{\sigma}{\Delta^{\set{0,1}}}$ is marked in $X$. The claim follows from \autoref{def:mbsanodyne} \ref{mb:composeacrossthin}.

  Suppose we are given $\varphi:\Delta^2 \to X$ such that $f(\varphi)$ is thin-scaled in $Y$. As usual we will assume without loss of generality that $S=\Delta^2_{\sharp}$ a maximally scaled 2-simplex. We can additionally assume that $\varphi$ is not contained in some $X_i$ for $i \in [2]$, otherwise the claim follows immediately. Let $s=p \circ \varphi$ and assume that $s$ factors through some $\Delta^1$. We define $j_{\varphi}$ as the biggest integer such that $s(j_{\varphi})=0 < s(2)$. Then a totally analogous argument to that of \autoref{lem:inductionextravaganzza} shows that we can produce a 3-simplex $T:\Delta^3 \to X$ such that:  
  \begin{itemize}
    \item The restriction of $T$ to the face missing $j_{\varphi}+1$ equals $\varphi$.
    \item The restriction of $T$ to the face missing $j_{\varphi}+2$ is a 2-simplex $\tau$ that  either factors through $X_{0}$ if $j_{\tau}=1$ or it has $j_{\tau} > j_{\varphi}$ if $j_{\tau}=0$.
    \item Every triangle of $T$ containing the edge $j_{\varphi}+1 \to j_{\varphi}+2$ is thin.
  \end{itemize}
  Note that by construction $f(T)$ must be fully scaled in $Y$. At this point the proof runs exactly the same as before. If the face of $T$ missing $j_{\varphi}+2$ is scaled it follows that $\varphi$ is scaled. This happens for example if $j_{\varphi}=1$. If $j_{\varphi}=0$ then we just need to apply \autoref{lem:inductionextravaganzza} twice to reach the conclusion. Finally if $s$ does not factor through $\Delta^1$ then it follows that $j_{\varphi}=1$ and the previous argument runs exactly the same. The proof for lean triangles is also essentially the same. If $s$ factors through $\Delta^n$ with $n\leq 1$ then the lean triangle in question must be thin and we are done. If this is not the case we reduce the problem to the previous situation with \autoref{lem:inductionextravaganzza}.
\end{proof}

\begin{definition}\label{defn:extravaganzadefn}
	Suppose we have a morphism
	\[
	\begin{tikzcd}
	X \arrow[rr,"f"] \arrow[rd,"p"] &   & Y \arrow[ld,swap,"q"] \\
	& \Delta^n &             
	\end{tikzcd}
	\]
	of 2-Cartesian fibrations over $\Delta^n$, for $n\geq 1$, and a commutative diagram 
	\[
	\begin{tikzcd}[ampersand replacement=\&]
	\partial \Delta^m \arrow[r,"\alpha"] \arrow[d] \& X \arrow[d,"f"] \\
	\Delta^m \arrow[r,"\beta"] \& Y
	\end{tikzcd}
	\]
	such that $r=q\circ \beta:\Delta^m \to \Delta^n$ is surjective.
	
	We define $j_\beta\in [m]$ to be the largest element such that $r(j_\beta)<r(m)$. We additionally define a simplicial subset $S^{m+1}_{\vec{j}}\subset \Delta^{m+1}$ to be the union of:
	\begin{itemize}
		\item all $m$-simplices of $\Delta^{m+1}$ \emph{other than} the faces missing $j_\beta+2$ or $j_\beta+1$;
		\item the $(m-1)$ simplex which misses \emph{both} $j_\beta+2$ and $j_\beta+1$.
	\end{itemize} 
 	See \autoref{fig:S31} for a geometric interpretation.
\end{definition}

\begin{figure}[h!]\label{fig:PathsInproduct}
	\begin{center}
		\begin{tikzpicture}[decoration={markings,mark=at position 0.5 with{\arrow{stealth}}}]
		\path (0,0) node (A0) {}
		(3,0) node (A2) {}
		(1.5,2.5) node (A1) {}
		(4,2) node (A3) {};
		\draw[blue,postaction={decorate}] (A0.center) -- (A1.center);
		\draw[blue,postaction={decorate}] (A0.center) -- (A3.center);
		\draw[blue,postaction={decorate}] (A0.center) -- (A2.center);
		\draw[white,line width=5mm] (A1) -- (A2);
		\path[fill=cyan,fill opacity=0.3] (A0.center) -- (A1.center) -- (A3.center)--cycle;
		\path[fill=cyan,fill opacity=0.3] (A0.center) -- (A1.center) -- (A2.center)--cycle;
		\draw[blue,postaction={decorate}] (A1.center) -- (A2.center);
		\draw[blue,postaction={decorate}] (A2.center) -- (A3.center);
		\draw[blue,postaction={decorate}] (A1.center) -- (A3.center);
		\end{tikzpicture}
	\end{center}
	\caption{The simplicial subset $S^3_1\subset S^3$.}\label{fig:S31}
\end{figure}
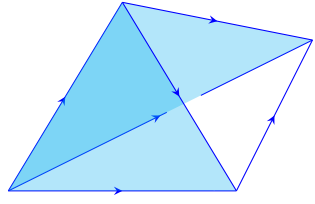

\begin{lemma}\label{lem:inductionextravaganzza}
	Let $n\geq 1$. Suppose we are given a morphism $f:X\to Y$ of 2-Cartesian fibrations over $\Delta^n$ and a lifting problem
	\[
	\begin{tikzcd}[ampersand replacement=\&]
	\partial \Delta^m \arrow[r,"\alpha"] \arrow[d] \& X \arrow[d,"f"] \\
	\Delta^m \arrow[r,"\beta"] \& Y
	\end{tikzcd}
	\]
	as in \autoref{defn:extravaganzadefn}. Suppose further that $f$ satisfies condition \ref{item:forExtravaganza} from \ref{prop:trivfibfibres}. 

  	 Then there exists a commutative diagram
  \[
    \begin{tikzcd}[ampersand replacement=\&]
      S^{m+1}_{\vec{j}} \arrow[r,"\epsilon"] \arrow[d] \& X \arrow[d,"f"] \\
      \Delta^{m+1} \arrow[r,"\theta"] \& Y
    \end{tikzcd}
  \]
   such that the following conditions hold:
  \begin{enumerate}
    \item The restriction of $\theta$ to be face missing $j_\beta+1$ equals $\beta$ and similarly, the restriction of $\epsilon$ to face missing $j_\beta +1$ equals $\alpha$.
    \item Let $\xi$ denote the restriction of $\alpha$ to the face missing $j_{\beta}+2$. Then either $j_{\xi} > j_{\beta}$ if $j_{\beta}<m-1$ or $\xi$ factors through $\Delta^{n-1}$ and similarly for $\epsilon$.
    \item The edge $j_{\beta}+1 \to j_{\beta}+2$ is marked and every triangle of $\theta$ that contains it is lean in $Y$. The analogous statement holds for $\epsilon$.
  \end{enumerate}
\end{lemma}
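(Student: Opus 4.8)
The plan is to build the pair $(\theta,\epsilon)$ by interposing a single $p$-Cartesian edge lying over the top edge $(n-1)\to n$ of $\Delta^n$, and then propagate this choice over the rest of $\Delta^{m+1}$ by anodyne fillings. As in the hypotheses we may assume $S=\Delta^n$, and I write $d_k\subset\Delta^{m+1}$ for the codimension-one face omitting the vertex $k$. Surjectivity of $r=q\circ\beta$ forces $r(m)=n$, and by the definition of $j_\beta$ the vertices $j_\beta+1,\dots,m$ all lie over $n$ while $\beta(j_\beta)$ lies over $r(j_\beta)<n$. Under the identification of $d_{j_\beta+1}$ with the original $\Delta^m$ carrying $\beta$ (and of $\partial d_{j_\beta+1}$ with $\partial\Delta^m$ carrying $\alpha$), the new vertex $j_\beta+1$ of $\Delta^{m+1}$ will be sent over $n-1$; here the hypothesis $n\geq 1$ is used.

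First I would fix the Cartesian data. Since $q$ is a $2$-Cartesian fibration, property \ref{mb:2CartliftsExist} supplies a $q$-Cartesian (hence marked) edge $\tilde e$ over $(n-1)\to n$ with target $\beta(j_\beta+1)$; using that $f$ preserves and detects marked edges (as established in the course of \autoref{prop:trivfibfibres}) together with the existence of $p$-Cartesian lifts in $X$, I would choose a $p$-Cartesian edge $\tilde e'$ with target $\alpha(j_\beta+1)$ and $f(\tilde e')=\tilde e$. Declaring $\theta|_{d_{j_\beta+1}}=\beta$, $\epsilon|_{\partial d_{j_\beta+1}}=\alpha$, and assigning $\tilde e,\tilde e'$ to the edge $j_\beta+1\to j_\beta+2$ pins down $\theta,\epsilon$ on the initial data. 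Every remaining nondegenerate simplex of $\Delta^{m+1}$ contains this new edge, and every $2$-face on it projects to a degenerate or thin triangle of $\Delta^n$; I would therefore extend $\theta$ over $Y$ by filling the associated family of horns, forcing each such triangle to be lean by fibrancy of $Y$. Because the two faces omitted in $S^{m+1}_{\vec j}$ are adjacent, these fillings are not literally generators from \autoref{def:mbsanodyne}, but they are assembled into \bS-anodyne pushouts exactly by \autoref{lem:indI} and \autoref{lem:indII} (in their inner, resp. left-degenerate/Cartesian, incarnations \ref{mb:innerhorn}, \ref{mb:leftdeglefthorn}, \ref{mb:2Cartesianmorphs}). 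The lift $\epsilon$ of $\theta|_{S^{m+1}_{\vec j}}$ extending $\alpha$ is then produced by solving the same fillings relative to $f$, which is legitimate precisely because $f$ has the right lifting property against \bS-anodyne maps by hypothesis \ref{item:forExtravaganza}.

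Conditions (1) and (3) hold essentially by construction: (1) records the identification of $d_{j_\beta+1}$ with $\Delta^m$, and (3) records that the interposed edge is a Cartesian (hence marked) lift through which we have scaled every incident triangle lean, compatibly in $X$ and $Y$. Condition (2) is the arithmetic payoff of placing the new vertex over $n-1$. Restricting to $d_{j_\beta+2}$ deletes one of the top vertices lying over $n$: if $j_\beta<m-1$, at least one such vertex survives, the new vertex $j_\beta+1$ (over $n-1<n$) becomes the largest vertex mapping strictly below the final one, and hence $j_\xi=j_\beta+1>j_\beta$; if $j_\beta=m-1$, the unique vertex over $n$ is removed, so this face maps into $\{0,\dots,n-1\}$ and thus factors through $\Delta^{n-1}$. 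The identical computation applies to $\epsilon$.

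The main obstacle is the middle step: producing $\theta$ together with its $f$-relative lift $\epsilon$ while controlling the biscaling. The adjacency of the two omitted faces means the plain generalized-horn \autoref{lem:indI} does not apply directly, so the filling must be routed through the inserted Cartesian edge, and one must check that every triangle meeting that edge can consistently be taken lean in both $X$ and $Y$ without clashing with triangles already forced by $\beta$ and $\alpha$. This scaling bookkeeping — rather than any single horn-filling — is where the real difficulty concentrates, and it is exactly what \autoref{lem:indI} and \autoref{lem:indII} were isolated to absorb.
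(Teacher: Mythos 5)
Your overall strategy coincides with the paper's: interpose a marked edge at the new vertex $j_\beta+1$, so that the given data assemble into a map out of $B^m_{j_\beta}=\Delta^m\coprod_{\Delta^{\{j_\beta+1\}}}(\Delta^1)^{\sharp}$ (resp.\ $\partial B^m_{j_\beta}$), then obtain $\theta$ by lifting against $q$ and $\epsilon$ by lifting against $f$ using hypothesis \ref{item:forExtravaganza}. Your verification of conditions (1)--(3) is also essentially the paper's; the paper sends the new vertex to $r(j_\beta)$ via $r\circ s_{j_\beta}$ rather than to $n-1$, but your choice works equally well for the arithmetic in (2). The genuine gap is that the entire technical content of the lemma is the claim that the inclusions $B^m_{j_\beta}\hookrightarrow\Delta^{m+1}$ and $\partial B^m_{j_\beta}\hookrightarrow S^{m+1}_{\vec{j}}$ are \bS-anodyne, and you do not prove it. You delegate it to \autoref{lem:indI} and \autoref{lem:indII}, but those lemmas apply only to generalized horns $\Lambda^m_{\vec{i}}$ whose omitted faces are indexed by \emph{non-consecutive} vertices, whereas here the two omitted facets $d_{j_\beta+1}$ and $d_{j_\beta+2}$ are adjacent --- a point you concede in your closing paragraph before nonetheless asserting that those lemmas ``absorb'' the difficulty. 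They do not: $B^m_{j_\beta}$ is a single facet with an edge glued on, which is not of the form $\Lambda^{m+1}_{\vec{i}}$ for any $\vec{i}$, and no non-consecutivity can be arranged. The paper's proof of this claim is a dedicated double induction (on $j_\beta$, and for each $j_\beta$ on $m$) that attaches the remaining facets one at a time in a carefully chosen order, arranging at each stage that the distinguished triangle of the horn being filled has already been forced thin or lean, so that the final attachment is a horn of type \ref{mb:innerhorn} or \ref{mb:2Cartesianmorphs}. That inductive bookkeeping is the actual proof and is absent from your write-up.

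Two smaller points. The assertion that ``every remaining nondegenerate simplex of $\Delta^{m+1}$ contains the new edge'' is false: the edge $\{0,j_\beta+1\}$ lies outside $B^m_{j_\beta}$ but does not contain $\{j_\beta+1,j_\beta+2\}$, which is precisely why the extension cannot be reduced to fillings routed through the marked edge alone. And you should not invoke ``$f$ detects marked edges, as established in the course of \autoref{prop:trivfibfibres}'' --- that detection statement is proved there \emph{using} the present lemma, so the appeal is circular; what you actually need (and have) is the right lifting property of $f$ against the generator \ref{mb:2CartliftsExist}, which is part of hypothesis \ref{item:forExtravaganza}.
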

\begin{proof}
  We start the proof by fixing the notation $\alpha(i)=x_i$ (resp. $\beta(i)=y_i$). Let us pick a marked morphism $e:\hat{x}_{j_\beta} \to x_{j_{\beta}+1}$. To ease notation, let us just denote $j_{\beta}$ simply by $j$. We define mb simplicial sets
  \[
    B^m_j=\Delta^m \coprod\limits_{\Delta^{\{j+1\}}}(\Delta^1)^{\sharp} \quad , \quad \partial B^m_j= \partial \Delta^m \coprod\limits_{\Delta^{\{j+1\}}} (\Delta^1)^{\sharp}.
  \]
  Note that we have commutative diagrams
  \[
    \begin{tikzcd}[ampersand replacement=\&]
      B^m_j \arrow[r] \arrow[d,swap,"\gamma^m_j"] \& Y \arrow[d,"q"] \\
      \Delta^{m+1} \arrow[r,"r \circ s_j"] \& \Delta^n
    \end{tikzcd} \quad \quad \quad
     \begin{tikzcd}[ampersand replacement=\&]
      \partial B^m_j \arrow[r] \arrow[d,swap,"\iota^m_j"] \& X \arrow[d,"p"] \\
      S^{m+1}_{\vec{j}} \arrow[r] \& \Delta^n
    \end{tikzcd}
  \]
 where bottom horizontal map in the second diagram is the restriction of $r \circ s_j$ to $S^{m+1}_{\vec{j}}$. We claim that the left vertical maps in both diagrams are \bS-anodyne. Suppose that this was already proved. Then let $\theta$ be a solution to the left-most commutative square. Note that we can form another diagram
 \[
   \begin{tikzcd}[ampersand replacement=\&]
      \partial B^m_j \arrow[r] \arrow[d] \& X \arrow[d,"f"] \\
      S^{m+1}_{\vec{j}} \arrow[r] \& Y
    \end{tikzcd} 
 \]
 where bottom horizontal map is the composite $\func{S^{m+1}_{\vec{j}} \to \Delta^{m+1} \to[\theta] Y}$. Since $f$ has the right lifting property against \bS-anodyne morphisms our result follows.

 First we will prove the family of cases where $j=m-1$ by using induction on $m$. The case $m=1$ is obviously true. Suppose that our claim holds for $m-1$ and let us prove the case $m$. We define simplicial sets $Y_i$ (resp. $\partial Y_i$) inductively by attaching to $Y_{i-1}$ (resp. $\partial Y_{i-1}$) the face missing $i$ for $0\leq i \leq m-1=j$. We make the convention $Y_{-1}=B^m_{m-1}$ (resp. $\partial Y_{-1}=\partial B^m_{m-1}$). This yields a filtrations
  \[
    \func{Y^m_{-1} \to Y^m_0 \to \cdots \to  Y^m_{m-1}=\Lambda^{m+1}_{m+1}} \quad, \quad \func{\partial Y^m_{-1} \to \partial Y^m_0 \to \cdots \to \partial Y^m_{m-1}=S^{m}_{\vec{j}}}
  \]
  It will then suffice to show that each step in both filtrations is \bS-anodyne. Let $0\leq i \leq m-1$ then we can produce a pushout squares
  \[
   \begin{tikzcd}[ampersand replacement=\&]
     Y^{m-1}_{i-1} \arrow[d] \arrow[r] \& \Delta^{n-1} \arrow[d] \\
     Y^{m}_{i-1} \arrow[r] \& Y^{n}_{i}
   \end{tikzcd} \quad \quad 
   \begin{tikzcd}[ampersand replacement=\&]
     Y^{m-1}_{i-1} \arrow[d] \arrow[r] \& \Delta^{n-1} \arrow[d] \\
     \partial Y^{m}_{i-1} \arrow[r] \& \partial Y^{n}_{i}
   \end{tikzcd}
  \]
  and the claim holds from the inductive hypothesis. The general proof will employ induction on $j$ and each case will be proved using induction on $m$. Note that given $j \geq 0$ the ground case for the induction on $m$ is given by $m=j+1$. In particular we have proved all the ground cases already. Now we will deal with ground case of the induction on $j$, namely $j=0$. Assume the claim to hold for $m-1 \geq 1$ and let us prove the case $m$. We define inductively $Z^m_{i-1}$ by attaching to $Z^m_{i}$ the face missing $i-1$ for $3\leq i\leq m+2$ where we define $Z^m_{m+2}=Z^m_0$ and analogously for $\partial Z^m_i$. This yields filtrations
  \[
    \func{Z^m_{m+2} \to Z^m_{m+1} \to \cdots \to  Z^m_{3}\to \Lambda^{m+1}_{2} } 
  \]
  \[
    \func{\partial Z^m_{m+2} \to \partial Z^m_{m+1} \to \cdots \to \partial Z_{3} \to S^{m}_{\vec{j}}}
  \]
  where the last step in both filtrations is given by attaching the face missing $0$. A similar argument as above shows that the claim follows from the inductive hypothesis for the every step except the last one. To prove that the last map in both filtrations is \bS-anodyne we consider pushout diagram
  \[
    \begin{tikzcd}[ampersand replacement=\&]
      \Lambda^{m-1}_1 \arrow[r] \arrow[d] \& \Delta^{m-1} \arrow[d] \\
      Z_{3} \arrow[r] \& \Lambda^{m+1}_{2}
    \end{tikzcd}
  \]
  and note that the triangle $\set{0,1,2}$ must be already be thin if $m>3$ or it can be chosen to be thin since it lies above a degenerate triangle in $\Delta^n$. The analogous conclusion also holds for $\partial Z_3$. Finally let us assume the claim holds for $j-1\geq 0$. The proof of this final inductive hypothesis is a mix of both previous cases. We will give an sketch here and leave the details for the interested reader. The idea is to add stepwise to $B^n_j$ (resp. $\partial B^n_j$) the faces missing $i$ for $n \leq i \leq j+3$. One can check that at each step this result map is \bS-anodyne using the induction hypothesis. Then we add the faces missing $\ell$ for $0 \leq \ell \leq j$ and again we find that each step in this process is \bS-anodyne. In the case of $\partial B^m_j$ we have already reached $S^{m+1}_{\vec{j}}$.  For $B^m_j$ after this process we reach $\Lambda^{m+1}_{j+2}$ where the triangle $\set{j+1,j+2,j+3}$ must be thin and the conclusion follows.
\end{proof}

\begin{proposition}\label{prop:fibresdetectequivalences}
 Suppose we are given a morphism of 2-Cartesian fibrations
  \[
    \begin{tikzcd}
     X \arrow[rr,"f"] \arrow[rd,"p"] &   & Y \arrow[ld,swap,"q"] \\
                        & S &             
    \end{tikzcd}
  \]
  Then the following are equivalent
  \begin{itemize}
    \item[i)] The map $f$ is a weak equivalence.
    \item[ii)] For every $s \in S$ the induced morphism $f_s:X_s \to Y_s$ is an equivalence of scaled simplicial sets.
  \end{itemize}
\end{proposition}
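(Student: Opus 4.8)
The plan is to prove the two implications separately, establishing $i)\Rightarrow ii)$ directly and then bootstrapping it to obtain $ii)\Rightarrow i)$ through a factorization. Throughout I will use freely that weak equivalences enjoy the two-out-of-three property, which is immediate from \autoref{def:weakequiv1} together with two-out-of-three for bicategorical equivalences of scaled simplicial sets (if $g,j$ are weak equivalences then $(g\circ j)^{*}=j^{*}\circ g^{*}$ is a composite of bicategorical equivalences on every mapping object).

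For $i)\Rightarrow ii)$ I would exploit the fact that both $p$ and $q$ are 2-Cartesian fibrations, so the lemma immediately following \autoref{prop:weakequivkan} identifies weak equivalences among such maps with \emph{marked homotopy equivalences over $S$}. Thus if $f$ is a weak equivalence I may pick $g\colon Y\to X$ over $S$ and marked homotopies $(\Delta^1,\sharp,\sharp)\times X\to X$ and $(\Delta^1,\sharp,\sharp)\times Y\to Y$ over $S$ witnessing $g\circ f\sim \id_X$ and $f\circ g\sim \id_Y$. Fixing $s\in S$, the key observation is that $(\Delta^1,\sharp,\sharp)\times X_s$ maps to $S$ constantly at $s$, so each homotopy restricts to carry it into the fibre $X_s$ (resp.\ $Y_s$). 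This exhibits $f_s\colon X_s\to Y_s$ as a marked homotopy equivalence of the $\infty$-bicategories $X_s$, $Y_s$, which by \autoref{lem:overthepoint} are fibrant with precisely their equivalences marked. Since the interval $(\Delta^1,\sharp)$ is the walking equivalence and hence a cylinder object in the scaled model structure, a homotopy inverse forces $f_s$ to be invertible in the homotopy category; therefore $f_s$ is an equivalence of scaled simplicial sets.

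For $ii)\Rightarrow i)$ I would assume every $f_s$ is an equivalence and factor $f$ over $S$, using the small object argument, as $X\xrightarrow{\,j\,}W\xrightarrow{\,g\,}Y$ with $j$ being $\bS$-anodyne and $g$ a $\bS$-fibration. Then $W\to S$ is a composite of $\bS$-fibrations, so $W$ is again a 2-Cartesian fibration. By \autoref{prop:mappingcofbs} the induced $j^{*}$ is a trivial fibration of scaled simplicial sets for every 2-Cartesian target, hence a bicategorical equivalence, so $j$ is a weak equivalence. Applying the already-proven implication $i)\Rightarrow ii)$ to $j$ shows each $j_s$ is an equivalence, and writing $f_s=g_s\circ j_s$ and invoking two-out-of-three gives that each $g_s$ is an equivalence as well. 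Since $g$ is a $\bS$-fibration that is a fibrewise equivalence, \autoref{prop:trivfibfibres} shows $g$ is a trivial fibration, and the remark following the definition of trivial fibration then gives that $g$ is a weak equivalence. Finally $f=g\circ j$ is a composite of weak equivalences, and two-out-of-three concludes.

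The genuinely delicate step is $i)\Rightarrow ii)$, namely descending from the global statement that $f$ is a weak equivalence to control on each fibre; the marked-homotopy characterization coming out of \autoref{prop:weakequivkan} is exactly what tames it, since marked homotopies restrict fibrewise along the constant structure map and the scaled interval furnishes a cylinder. The converse is then essentially formal: the heavy combinatorics is already absorbed in \autoref{lem:inductionextravaganzza} and \autoref{prop:trivfibfibres}, and all that remains is the factorization and two bookkeeping applications of two-out-of-three.
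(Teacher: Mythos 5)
Your proposal is correct and follows essentially the same route as the paper: the forward implication is obtained by producing a homotopy inverse over $S$ (via the marked-homotopy characterization of weak equivalences between fibrant objects) and restricting the homotopies fibrewise, while the converse factors $f$ into a $\bS$-anodyne map followed by a $\bS$-fibration and invokes \autoref{prop:mappingcofbs}, two-out-of-three on fibres, and \autoref{prop:trivfibfibres}. The only difference is one of exposition: you spell out the fibrewise restriction of the marked homotopies, which the paper leaves implicit as ``clear.''
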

\begin{proof}
  The implication $i) \implies ii)$ is clear since we can construct an inverse up to homotopy for $f$ as we did in the proof of \autoref{prop:weakequivkan}. To prove the converse we will apply the small object argument and obtain a factorization of $f$
  \[
    \func{X \to[u] L \to[v] Y}
  \]
  where the map $u$ is \bS-anodyne and $v$ has the right lifting property against the class of \bS-anodyne maps. It follows from \autoref{prop:mappingcofbs} that $u$ must be a weak equivalence. Now we observe that $L \to S$ must be a 2-Cartesian fibration. It follows that the induced morphism on fibres $L_s \to Y_s$ must be a bicategorical equivalence for every $s \in S$. We can now apply \autoref{prop:trivfibfibres} to obtain that $v$ must be a trivial fibration. This finishes the proof.
\end{proof}

\begin{lemma}\label{lem:perfect}
  The class of weak equivalences in $(\mbsSet)_{/S}$ is perfect in the sense of \cite[A.2.6.10]{HTT}.
\end{lemma}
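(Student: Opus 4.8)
The plan is to verify the four conditions defining a perfect class in \cite[Def.~A.2.6.10]{HTT}: that the class $W$ of weak equivalences contains the isomorphisms, satisfies two-out-of-three, is stable under filtered colimits in $\on{Fun}([1],(\mbsSet)_{/S})$, and is generated under filtered colimits by a small set. Since $(\mbsSet)_{/S}$ is presentable, the whole problem takes place in a presentable arrow category, and the first two conditions are immediate: an isomorphism $h$ induces an isomorphism $h^\ast$ on all mapping objects, and two-out-of-three for $W$ is inherited from two-out-of-three for bicategorical equivalences (equivalently, from \autoref{prop:weakequivkan} together with two-out-of-three for equivalences of $\infty$-categories).

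The heart of the argument is to replace the defining condition on $W$ --- which quantifies over the proper class of all $2$-Cartesian fibrations $X$ --- by a \emph{fibrewise} condition that is manifestly accessible. First I would observe that the class $\bS$ is generated by a genuine \emph{set}: the only generator ranging over a proper class is \ref{mb:equivalences}, and, just as in Lurie's treatment of the marked model structure, the map $(K,\flat,\sharp)\to(K,\sharp,\sharp)$ is a filtered colimit of the analogous maps over the countable sub-Kan-complexes of $K$, so restricting \ref{mb:equivalences} to a set of representatives of countable Kan complexes yields the same weakly saturated class. Applying the small object argument to this set inside $(\mbsSet)_{/S}$ produces a fibrant-replacement functor $R$ with a natural $\bS$-anodyne transformation $\id\to R$; as the chosen generators have $\aleph_1$-small sources, $R$ preserves $\kappa$-filtered colimits for a suitable regular cardinal $\kappa$. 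By \autoref{prop:mappingcofbs} every $\bS$-anodyne map is a weak equivalence, so $X\to R(X)$ lies in $W$ and, by two-out-of-three, $f\in W$ if and only if $R(f)\in W$. Since $R(f)$ is a morphism of $2$-Cartesian fibrations, \autoref{prop:fibresdetectequivalences} identifies this with the condition that $R(f)_s\colon R(A)_s\to R(B)_s$ be a bicategorical equivalence for every vertex $s\in S$.

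This exhibits $W$ as the preimage $G^{-1}\!\big(\prod_{s\in S}W_{\mathbf{sc}}\big)$ under the functor $G\colon f\mapsto (R(f)_s)_{s\in S}$ into the small product $\prod_{s\in S}\on{Fun}([1],\Set_\Delta^{\mathbf{sc}})$, where $W_{\mathbf{sc}}$ is the class of bicategorical equivalences. The functor $G$ is accessible and preserves $\kappa$-filtered colimits, since $R$ does and taking the fibre $(-)_s$ (a pullback along $\{s\}\hookrightarrow S$) commutes with filtered colimits. The class $W_{\mathbf{sc}}$ is perfect, being the weak equivalences of the combinatorial scaled model structure of \cite{LurieGoodwillie} (cf.\ \cite{GHL_equiv}); in particular it is accessible, accessibly embedded, and closed under filtered colimits. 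Generation by a small set (condition (4)) then follows formally: $W$ is an accessible, accessibly embedded subcategory of the presentable arrow category, and any such subcategory is generated under $\lambda$-filtered colimits by its $\lambda$-compact objects for $\lambda$ large, a small set.

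The step I expect to be the main obstacle is the remaining closure under \emph{all} filtered colimits, since $G$ preserves only $\kappa$-filtered colimits and accessible subcategories are not in general closed under arbitrary filtered colimits. I would handle this not through $G$ but directly on mapping objects: after reducing (by the standard manipulations available in a presentable category) to a transfinite composition of cofibrations, the induced cofiltered system $\{\on{Map}_S(B_\alpha,X)\}\to\{\on{Map}_S(A_\alpha,X)\}$ has transition maps that are fibrations of $\infty$-bicategories by \autoref{prop:mappingcofbs}, so its strict limit computes the homotopy limit, and a levelwise bicategorical equivalence of such towers induces a bicategorical equivalence on limits; this makes $(\colim_\alpha h_\alpha)^\ast$ a bicategorical equivalence, whence $\colim_\alpha h_\alpha\in W$. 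Once all four conditions are assembled, $W$ is perfect.
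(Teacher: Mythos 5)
Your argument is essentially the paper's: both pass through a small-object-argument fibrant replacement functor, use \autoref{prop:fibresdetectequivalences} to rewrite the class of weak equivalences as the preimage of the (perfect) class of fibrewise bicategorical equivalences under a filtered-colimit-preserving functor, and conclude by the stability of perfect classes under such preimages (\cite[A.2.6.12]{HTT}, whose content you are largely re-deriving by hand). The only divergence is your final paragraph: the paper sidesteps the $\kappa$-filtered issue by arranging (as you yourself note for generator \ref{mb:equivalences}) that the replacement functor preserves all filtered colimits, so the separate tower/transfinite-composition argument --- which as sketched is shaky, since a general filtered colimit of arrows is not a transfinite composition of cofibrations --- is not needed.
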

\begin{proof}
  Using the small object argument we produce a functor (which preserves filtered colimits)
  \[
    \func{T:(\mbsSet)_{/S} \to (\mbsSet)_{/S}}
  \]
  equipped with a natural transformation $\on{id} \xRightarrow T$ such that for every $K \in (\mbsSet)_{/S}$ the map $K \to T(K)$ is \bS-anodyne and $T(K)$ is a 2-Cartesian fibration. It follows that a morphism $h:K \to L$ is a weak equivalence if and only if $T(h)$ is a weak equivalence. We finally consider the composite
  \[
    \func{W_S:(\mbsSet)_{/S}  \to[T] (\mbsSet)_{/S}  \to \prod\limits_{s \in S}\mbsSet \to \prod\limits_{s \in S}\on{Set}_{\Delta}^{\on{sc}} }
  \]
  where the second functor is given by taking pullback along each fibre and the second functor is a product of forgetful functors. It follows that $W_S$ preserves filtered colimits. Let $\mathcal{E}_S=\prod\limits_{s \in S}\mathcal{E}$ where $\mathcal{E}$ denotes the collection of weak equivalences in $\on{Set}_{\Delta}^{\on{sc}}$. Since $\mathcal{E}$ is perfect then so is $\mathcal{E}_S$. It is immediate to see that the collection of weak equivalences in $(\mbsSet)_{/S}$ is precisely given by $W_{S}^{-1}(\mathcal{E}_S)$ and the result follows from \cite[A.2.6.12]{HTT}.
  \end{proof}

  \begin{lemma}\label{lem:compatible}
    Let $p:X \to S$ and $n\geq 0$. Then the morphism $r:X \times (\Delta^n)_{\sharp}^{\sharp} \to X$ given by projection to $X$ is a weak equivalence.
  \end{lemma}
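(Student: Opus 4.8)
The plan is to verify characterization \ref{equivs:space} of \autoref{prop:weakequivkan} directly. I would deliberately avoid the marked-homotopy criterion recorded just after that proposition: it presupposes that both objects are $2$-Cartesian fibrations over $S$, whereas here $X$ is an arbitrary mb simplicial set. Characterization \ref{equivs:space}, on the other hand, is a criterion for an \emph{arbitrary} morphism to be a weak equivalence, so no fibrancy of $X$ is required.

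First I would recast $r$ through the $\Set_\Delta$-enrichment of $(\mbsSet)_{/S}$ furnished by $\on{Map}^{\isom}_{S}(\mathblank,\mathblank)$. Under this enrichment the tensor of a simplicial set $K$ with an object $X$ is $K \tensor X = I(K^{\sharp}) \times X$; specializing to $K = \Delta^n$ gives exactly $\Delta^n \tensor X = (\Delta^n)^{\sharp}_{\sharp}\times X$, the source of $r$. Writing $u:\Delta^n \to \Delta^0$ for the unique map and identifying $\Delta^0 \tensor X \isom X$, the projection $r$ is precisely $u \tensor X$.

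Next, fix a $2$-Cartesian fibration $\pi:Z\to S$ and set $\mathcal K := \on{Map}^{\isom}_{S}(X,Z)$, which is a Kan complex by \autoref{def:underlyingmapping}. The defining adjunction of the tensoring supplies a natural isomorphism $\on{Map}^{\isom}_{S}(\Delta^n \tensor X, Z) \isom \mathcal K^{\Delta^n}$, and by naturality in the simplicial variable the induced map $r^{*}=\on{Map}^{\isom}_{S}(u\tensor X, Z)$ is carried to the restriction $u^{*}:\mathcal K = \mathcal K^{\Delta^0}\to \mathcal K^{\Delta^n}$. Since $\mathcal K$ is a Kan complex and $\{0\}\hookrightarrow \Delta^n$ is anodyne, evaluation at the vertex $0$ is a trivial fibration $\mathcal K^{\Delta^n}\to \mathcal K$ of which $u^{*}$ is a section; hence $u^{*}$, and therefore $r^{*}$, is a homotopy equivalence of Kan complexes. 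By \autoref{prop:weakequivkan} this proves that $r$ is a weak equivalence.

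The only load-bearing points are the identification of $(\Delta^n)^{\sharp}_{\sharp}\times X$ with the enriched tensor $\Delta^n \tensor X$ together with the naturality of the tensor--hom adjunction in $K$ (so that $r^{*}$ becomes $u^{*}$); granting these, the conclusion reduces to the elementary contractibility of $\mathcal K^{\Delta^n}$ over $\mathcal K$. I expect the only real obstacle to be bookkeeping: checking that the enrichment of the preceding remark genuinely provides this adjunction, and emphasizing that \ref{equivs:space} lets us test weak equivalence on the non-fibrant object $X$ without passing through a fibrant replacement by hand.
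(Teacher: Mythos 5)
Your proof is correct, but it follows a genuinely different route from the paper's. The paper's own argument is a two-line application of two-out-of-three: the terminal-vertex inclusion $t_n:(\Delta^{0})^{\sharp}_{\sharp}\to(\Delta^{n})^{\sharp}_{\sharp}$ induces a section $s=X\times t_n$ of $r$; the map $t_n$ is \bS-anodyne (for $n=1$ it is the generator \ref{mb:2CartliftsExist}, and for larger $n$ a short induction is left to the reader), so \autoref{prop:PP} makes $s$ \bS-anodyne and hence a weak equivalence, and $r$ is then a weak equivalence because $r\circ s=\id_X$. You instead verify criterion \ref{equivs:space} of \autoref{prop:weakequivkan} directly through the $\on{Set}_{\Delta}$-enrichment, identifying $r$ with $u\tensor X$ and $r^{*}$ with the constant-diagram section $\mathcal{K}\to\mathcal{K}^{\Delta^n}$ of the evaluation trivial fibration. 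Both arguments are sound, and the points you flag as load-bearing do hold: the adjunction isomorphism follows because $(\Delta^m)^{\sharp}_{\sharp}\times(\Delta^n)^{\sharp}_{\sharp}=(\Delta^m\times\Delta^n)^{\sharp}_{\sharp}$ and any map out of a maximally marked, maximally biscaled object must land in the marked edges and thin triangles, i.e.\ factors through $\on{Map}^{\isom}_S(X,Z)$; moreover $\on{Map}^{\isom}_S(X,Z)$ is indeed a Kan complex because the fibrancy hypothesis in \autoref{def:underlyingmapping} sits on $Z$, not on $X$, and criterion \ref{equivs:space} is available for non-fibrant sources since the proof of \autoref{prop:weakequivkan} reduces to the fibrant case by \bS-anodyne replacement. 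What the paper's route buys is brevity modulo the unwritten combinatorial check that $t_n$ is \bS-anodyne; what yours buys is the elimination of all anodyne-map combinatorics in favor of tensor--hom bookkeeping, which is arguably the more natural framing given that \autoref{lem:compatible} exists precisely to verify the simplicial-compatibility hypothesis of \cite[Prop. A.3.1.7]{HTT} in the proof of \autoref{thm:modelstructure}.
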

  \begin{proof}
    Note that the inclusion of the terminal object $t_n:(\Delta^{0})^{\sharp}_{\sharp} \to (\Delta^n)^{\sharp}_{\sharp}$ induces a section $s:X \to X \times (\Delta^n)^{\sharp}_{\sharp}$. Since our class of weak equivalences satisfies 2-out-of-3 it follows that it is enough to show that $s$ is a weak equivalence. It is easy to verify that the map $t_n$ is \bS-anodyne and the claim follows from \autoref{prop:PP}.
  \end{proof}
  
  \begin{theorem}\label{thm:modelstructure}
    Let $S$ be a scaled simplicial set. Then there exists a left proper combinatorial simplicial model structure on $(\mbsSet )_{/S}$, which is characterized uniquely by the following properties:
    \begin{itemize}
      \item[C)] A morphism $f:X \to Y$ in $(\mbsSet )_{/S}$ is a cofibration if and only if $f$ induces a monomorphism betwee the underlying simplicial sets.
      \item[F)] An object $X \in (\mbsSet )_{/S}$ is fibrant if and only if $X$ is a 2-Cartesian fibration. 
    \end{itemize}
  \end{theorem}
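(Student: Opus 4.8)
The plan is to produce the model structure from the recognition theorem \cite[A.2.6.13]{HTT} and then separately characterise its fibrant objects and its simplicial enrichment. The underlying category $(\mbsSet)_{/S}$ is presentable, its cofibrations form the weakly saturated class generated by the set \ref{cof:bndry}--\ref{cof:thin}, and its weak equivalences are those of \autoref{def:weakequiv1}. The hypotheses of \cite[A.2.6.13]{HTT} have all been assembled above: the weak equivalences satisfy two-out-of-three (inherited from bicategorical equivalences through \autoref{def:weakequiv1}) and are perfect (\autoref{lem:perfect}); they are stable under pushout along cofibrations (\autoref{prop:leftproper}); and every map with the right lifting property against the generating cofibrations --- i.e. every trivial fibration --- is a weak equivalence. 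Invoking the theorem yields a left proper combinatorial model structure in which the cofibrations are exactly the monomorphisms, giving property (C).

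To establish (F) I would first record that every \bS-anodyne map is a trivial cofibration: it is a monomorphism, and by \autoref{prop:mappingcofbs} it induces trivial fibrations on the mapping $\infty$-bicategories $\on{Map}_S(-,X)$ for every 2-Cartesian fibration $X$, hence is a weak equivalence by \autoref{def:weakequiv1}. Consequently, if $X$ is fibrant, its structure map lifts against all trivial cofibrations and in particular against all \bS-anodyne maps, so $X$ is a \bS-fibration; this is the easy implication of (F). For the converse, let $p\colon X\to S$ be a \bS-fibration. Take a fibrant replacement $X\xrightarrow{i}\bar X\to S$ with $i$ a trivial cofibration; by the easy implication $\bar X$ is a 2-Cartesian fibration. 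Apply the small object argument for \bS to factor $i$ as $X\xrightarrow{a}X'\xrightarrow{f}\bar X$ with $a$ \bS-anodyne and $f$ a \bS-fibration, so that $X'\to S$ is again a \bS-fibration. Since $a$ and $i$ are weak equivalences, two-out-of-three shows $f$ is a weak equivalence between 2-Cartesian fibrations; \autoref{prop:fibresdetectequivalences} makes $f$ a fibrewise bicategorical equivalence, and \autoref{prop:trivfibfibres} then promotes $f$ to a trivial fibration. In particular $f$ is a fibration, whence $X'$ is fibrant. Finally, because $p$ is a \bS-fibration and $a$ is \bS-anodyne, the identity of $X$ lifts along $a$ over $S$ to a retraction $r\colon X'\to X$, exhibiting $X$ as a retract of the fibrant object $X'$; as fibrant objects are closed under retracts, $X$ is fibrant. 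This proves (F), and uniqueness follows from the general principle that a model structure is determined by its cofibrations together with its fibrant objects.

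It remains to verify that the structure is simplicial for the $\Set_\Delta$-enrichment of the preceding remark, whose tensor is $K\otimes X = I(K^\sharp)\times X$. The pushout-product axiom follows from \autoref{prop:PP} transported through this tensor: the pushout-product of a cofibration of $(\mbsSet)_{/S}$ with $I(j^\sharp)$ for a map of simplicial sets $j$ is literally the pushout-product of \autoref{prop:PP}, while the compatibility of fibrations with mapping objects is supplied by \autoref{cor:bsfibfun} and \autoref{prop:mappingcofbs}. The only genuinely new input is that $I((-)^\sharp)$ sends anodyne maps of simplicial sets to \bS-anodyne maps, which one checks on horn inclusions $\Lambda^n_k\to\Delta^n$ (maximally marked and biscaled) using the generators \ref{mb:innerhorn}, \ref{mb:composeacrossthin} and \ref{mb:equivalences}.

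The hardest analysis --- the pushout-product \autoref{prop:PP} together with the fibrewise criteria \autoref{prop:trivfibfibres} and \autoref{prop:fibresdetectequivalences} --- is already in hand, so the main remaining obstacle is the backward half of (F): one must resist lifting directly against an arbitrary trivial cofibration, whose source and target need not be 2-Cartesian fibrations, and instead route through the \bS-anodyne/\bS-fibration factorisation so that the fibrewise criteria become applicable, closing the argument with the retract trick above.
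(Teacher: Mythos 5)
Your existence argument via \cite[Prop.~A.2.6.13]{HTT} uses exactly the same ingredients as the paper (\autoref{lem:perfect}, \autoref{prop:leftproper}, and the observation that trivial fibrations are weak equivalences), and your verification of property (F) --- identifying \bS-anodyne maps as trivial cofibrations for the easy direction, then routing the converse through the \bS-anodyne/\bS-fibration factorisation, \autoref{prop:fibresdetectequivalences}, \autoref{prop:trivfibfibres}, and the retract trick --- is correct and is in fact \emph{more} explicit than the paper, whose proof of this theorem leaves the identification of the fibrant objects implicit. That part of your write-up is a genuine improvement in completeness.

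The gap is in your verification of the simplicial structure. You claim that $I((-)^\sharp)$ carries anodyne maps of simplicial sets to \bS-anodyne maps, so that \autoref{prop:PP} applies directly. This is false: the left horn inclusion $\Lambda^1_0=\Delta^{\{0\}}\to\Delta^1$ is anodyne, but $(\Delta^0)^{\sharp}_{\sharp}\to(\Delta^1)^{\sharp}_{\sharp}$ --- the inclusion of the \emph{initial} vertex --- is not \bS-anodyne. If it were, every 2-Cartesian fibration $p\colon X\to S$ would admit, for each $x\in X$ and each edge $e\colon p(x)\to s$ in $S$, a \emph{marked} (hence Cartesian) lift of $e$ with prescribed \emph{source} $x$; Cartesian lifts come with prescribed target, and this already fails for an ordinary Cartesian fibration over $\Delta^1$ classifying a functor that is not essentially surjective. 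The same obstruction recurs for every outer horn $\Lambda^n_0$, where none of the generators you invoke is available (in particular \ref{mb:equivalences} does not apply, since $\Delta^n$ is not a Kan complex). What SM7 actually requires is weaker: the relevant pushout-products live \emph{over $S$}, with the $(\Delta^n)^{\sharp}_{\sharp}$-coordinate projected away before mapping to $S$, and they need only be trivial \emph{cofibrations}, not \bS-anodyne. This is precisely what the paper's route delivers: \cite[Prop.~A.3.1.7]{HTT} reduces the simplicial axiom to the fact that every object is cofibrant together with \autoref{lem:compatible}, i.e.\ that each projection $X\times(\Delta^n)^{\sharp}_{\sharp}\to X$ is a weak equivalence --- proved using the \emph{terminal}-vertex inclusion, which \emph{is} \bS-anodyne by \ref{mb:2CartliftsExist}, plus two-out-of-three. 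Replacing your penultimate paragraph with this reduction closes the argument.
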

  \begin{proof}
    We will use \cite[Prop. A.2.6.13]{HTT} to deduce the existence of a left proper combinatorial model structure in $(\mbsSet )_{/S}$. \autoref{lem:perfect} shows that the class of weak equivalences is perfect. We proved in \autoref{prop:leftproper} that weak equivalences are stable under pushouts along cofibrations. It is also immediate to see that trivial fibrations are in particular weak equivalences so the conditions of \cite[Prop. A.2.6.13]{HTT} apply. Now we wish to show that this model structure is compatible with the simplicial structure. This follows from \cite[Prop. A.3.1.7]{HTT} coupled with \autoref{lem:compatible}.
  \end{proof}

  \begin{theorem}\label{thm:equivwithSC}
    The adjunction presented in \autoref{rem:forgetfuladj}
    \[
      \begin{tikzcd}[ampersand replacement=\&]
        L:\on{Set}_{\Delta}^{\on{sc}} \arrow[r,shift left=0.8] \& \arrow[l,shift left=0.8] \mbsSet:U
      \end{tikzcd}
    \]
    is a Quillen equivalence where the right-handside is equipped with the model structure of mb simplicial sets over the point constructed in \autoref{thm:modelstructure}.
  \end{theorem}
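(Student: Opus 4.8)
The plan is to verify the two halves of the standard recognition criterion for a Quillen equivalence: that $U$ reflects weak equivalences between fibrant objects, and that the derived unit is a weak equivalence on every (necessarily cofibrant) object. The whole argument is organized around two structural observations. First, by \autoref{rem:forgetfuladj} one has $U\circ L=\id_{\Set_\Delta^{\mathrm{sc}}}$, so the unit of the adjunction is the identity and the derived unit reduces to $U$ applied to a fibrant replacement of $LA$. Second, by \autoref{lem:overthepoint} every fibrant object of $(\mbsSet)_{/\Delta^0}$ has the form $\widehat{\mathbb A}:=(X,\mathrm{Eq},T\subseteq T)$ for an $\infty$-bicategory $\mathbb A=(X,T)$ --- its marking being the equivalences and its two scalings agreeing --- and by \autoref{prop:fibresdetectequivalences}, applied over $S=\Delta^0$ (where the fibre of a fibrant object \emph{is} its underlying scaled simplicial set $U(\widehat{\mathbb A})$), a map $g$ between such fibrant objects is a weak equivalence if and only if $U(g)$ is a weak equivalence of scaled simplicial sets. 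This last statement is exactly the reflection (and detection) of weak equivalences between fibrant objects.

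The key technical input --- and the main obstacle --- is the converse of \autoref{lem:overthepoint}: for every $\infty$-bicategory $\mathbb A=(X,T)$ the marked biscaled set $\widehat{\mathbb A}=(X,\mathrm{Eq},T\subseteq T)$ is a $\bS$-fibration over $\Delta^0$. I would prove this by checking the right lifting property against each generator of \autoref{def:mbsanodyne}. Since over the point the thin and lean scalings coincide and the marking is the class of equivalences, most generators reduce to standard facts about $\infty$-bicategories: \ref{mb:innerhorn} and \ref{mb:wonky4} hold because $\mathbb A$ is fibrant; \ref{mb:coCartoverThin} is automatic as lean $=$ thin; \ref{mb:composeacrossthin}, \ref{mb:2CartliftsExist}, and \ref{mb:equivalences} hold because equivalences are closed under composition and contain the degeneracies; and \ref{mb:innersaturation}--\ref{mb:coCart2of3} follow from the saturation and limited two-out-of-three properties of thin triangles together with the stability of equivalences. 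The genuinely substantive cases are the outer fillings \ref{mb:leftdeglefthorn} and \ref{mb:2Cartesianmorphs}, which I would settle using that an outer horn $\Lambda^n_0$ (resp. $\Lambda^n_n$) whose exceptional edge is degenerate (resp. an equivalence) and whose distinguished triangle is thin can always be filled in an $\infty$-bicategory. Essentially all of the work lies in this verification.

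With \autoref{lem:overthepoint} and its converse in hand, I would next check that $(L,U)$ is a Quillen adjunction by showing that $L$ preserves cofibrations and all weak equivalences, whence it preserves trivial cofibrations. Preservation of cofibrations is immediate, as $L$ is the identity on underlying simplicial sets. For weak equivalences I first note that $L$ carries scaled anodyne maps to $\bS$-anodyne maps: the inner-horn and $\Delta^4$ generators of \autoref{def:scanodyne} map to \ref{mb:innerhorn} and \ref{mb:wonky4}, while the outer generator maps to a pushout of \ref{mb:leftdeglefthorn} --- here one uses that the source already carries $\Delta^{\{0,1,n\}}$ as thin, and that this extra thinness is simply retained by the pushout, so the result matches $L$ of the outer generator on the nose. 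Moreover, for an $\infty$-bicategory $\mathbb A$ the canonical map $L\mathbb A=(X,\flat,T)\to\widehat{\mathbb A}=(X,\mathrm{Eq},T)$ is $\bS$-anodyne, being assembled from \ref{mb:equivalences} applied to the maximal Kan complex of $\mathbb A$ (whose $2$-simplices are thin). Given an arbitrary scaled weak equivalence $a\colon A\to A'$, I then choose scaled-anodyne fibrant replacements $A\to\mathbb A$, $A'\to\mathbb A'$ and reduce by two-out-of-three to the induced equivalence $b\colon\mathbb A\to\mathbb A'$ of $\infty$-bicategories; comparing $L\mathbb A\to\widehat{\mathbb A}$ with $L\mathbb A'\to\widehat{\mathbb A'}$ and invoking the reflection statement of the first paragraph shows that $L(b)$, and hence $L(a)$, is a weak equivalence.

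Finally, for the derived unit I take $A=(X,T)$, choose a scaled-anodyne replacement $a\colon A\to\mathbb A$ into an $\infty$-bicategory, and set $j\colon LA\xrightarrow{L(a)}L\mathbb A\to\widehat{\mathbb A}$. By the previous paragraph $j$ is $\bS$-anodyne, and by the converse of \autoref{lem:overthepoint} its target $\widehat{\mathbb A}$ is fibrant, so $j$ exhibits $\widehat{\mathbb A}$ as a fibrant replacement of $LA$. Since $U(j)=a$ under the identification $U L=\id$, the derived unit is exactly $a$, a weak equivalence of scaled simplicial sets. Combining this with the reflection of weak equivalences between fibrant objects supplied by \autoref{prop:fibresdetectequivalences}, the recognition criterion yields that $(L,U)$ is a Quillen equivalence. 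Concretely: for cofibrant $A$ and fibrant $Z$, one factors a map $LA\to Z$ through $j$ as $\bar f\circ j$ with $\bar f$ between fibrant objects, and then the chain of equivalences ``$LA\to Z$ is a weak equivalence $\iff$ $\bar f$ is $\iff$ $U(\bar f)$ is $\iff$ $U(f)=A\to UZ$ is'', using two-out-of-three and the iff of \autoref{prop:fibresdetectequivalences}, gives precisely the defining property of a Quillen equivalence.
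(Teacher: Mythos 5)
Your proof is correct, but it follows a genuinely different (and more self-contained) route than the paper's. The paper establishes the Quillen adjunction by identifying $\on{Fun}^{\mathbf{mb}}(L(-),\mathbb{D})$ with $\on{Fun}^{\mathbf{sc}}(-,U(\mathbb{D}))$ for fibrant $\mathbb{D}$, and then verifies only the derived counit condition: for fibrant $\mathbb{B}$ the map $L U\mathbb{B}=(\mathbb{B},\flat,T_{\mathbb{B}})\to(\mathbb{B},E_{\mathbb{B}},T_{\mathbb{B}})$ is a pushout of a generator of type \ref{mb:equivalences}, hence a weak equivalence; the unit/reflection half of the recognition criterion is left implicit. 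You instead verify the other pair of conditions --- $U$ reflects weak equivalences between fibrant objects (correctly extracted from \autoref{prop:fibresdetectequivalences} over $\Delta^0$) together with the derived unit being an equivalence --- and for the latter you need the converse of \autoref{lem:overthepoint}, namely that $(X,\mathrm{Eq},T\subseteq T)$ is a $\bS$-fibration for every $\infty$-bicategory $(X,T)$. The paper only obtains that statement later, as the $S=\Delta^0$ instance of \autoref{thm:characterization2Fib}, so you are front-loading real work: the cases \ref{mb:leftdeglefthorn}, \ref{mb:2Cartesianmorphs}, and also \ref{mb:innersaturation}--\ref{mb:coCart2of3} (which you dismiss a bit quickly as ``saturation and limited two-out-of-three'' --- these are exactly the content of the paper's Lemmas 4.17 and 4.20 specialized to the point, and each needs a genuine argument about thin triangles in an $\infty$-bicategory). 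Granting those standard facts, your argument is complete; your observation that $L$ sends the outer scaled-anodyne generator to a pushout of \ref{mb:leftdeglefthorn}, and your use of the type-\ref{mb:equivalences} pushout along the core Kan complex to identify $L\mathbb{A}\to\widehat{\mathbb{A}}$ as $\bS$-anodyne, coincide with steps the paper also uses. What your approach buys is an explicit fibrant replacement of $LA$ and hence an explicit identification of the derived unit with the scaled-anodyne map $A\to\mathbb{A}$, closing a step the paper's terse proof does not spell out; what it costs is the generator-by-generator fibrancy check, which the paper defers to its final section.
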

  \begin{proof}
    First we will show that $L$ preserves cofibrations and trivial cofibrations. The case of cofibrations is immediate. Now let us suppose that $(A,T_A) \to (B,T_B)$ is a trivial cofibration of scaled simplicial sets. Let $\mathbb{D}$ be a fibrant object in $\mbsSet$ and note that as stated before $\mathbb{D}$ is an $\infty$-bicategory with all the equivalences marked. It is immediate that the morphism
    \[
      \func{\on{Fun}^{\mathbf{mb}}(L(B),\mathbb{D}) \to  \on{Fun}^{\mathbf{mb}}(L(A),\mathbb{D}))}
    \]
    can be identified with the analogous morphism
    \[
      \func{\on{Fun}^{\mathbf{sc}}(B,U(\mathbb{D})) \to  \on{Fun}^{\mathbf{sc}}(A,U(\mathbb{D})}
    \]
    between the underlying scaled simplicial sets. It follows that $L \dashv U$ is a Quillen adjunction. Note that $U \circ L=\on{id}$. To conclude the proof suppose that $\mathbb{B}$ is a fibrant mb simiplicial set. In particular, we need to show that the map
    \[
     \func{(\mathbb{B},\flat,T_{\mathbb{B}}) \to (\mathbb{B},E_{\mathbb{B}},T_{\mathbb{B}}) }
    \]
    is a weak equivalence. However the above morphism is a pushout of a morphism of type \ref{mb:equivalences} in \autoref{def:mbsanodyne}.
    \end{proof}

\section{2-Cartesian fibrations over a fibrant base.}\label{sec:fibbase}
The goal of this section is to give a characterization of 2-Cartesian fibrations in the specific case where $S \in \on{Set}_{\Delta}^{\on{sc}}$ is an $\infty$-bicategory. For the rest of this section we will fix a functor of $\infty$-bicategories $p: X\to S$.

\begin{definition}
  We say that a 2-simplex $\sigma: \Delta^2 \to X$ is \emph{left degenerate} if its restriction $\restr{\sigma}{\Delta^{\set{0,1}}}$ is a degenerate simplex in $X$.
\end{definition}

\begin{definition}
  Let $\func{p:X \to S}$ be a weak \sS-fibration. We call a left-degenerate 2-simplex $\sigma: \Delta^2  \to X$, $p$-\emph{coCartesian} if there exists a solution for any lifting problem of the form
  \[
    \begin{tikzcd}[ampersand replacement=\&]
      \Lambda^n_0 \arrow[r,"f"] \arrow[d] \& X \arrow[d,"p"] \\
      \Delta^{n} \arrow[r] \& S
    \end{tikzcd}
  \]
  provided $\restr{f}{\Delta^{\set{0,1,n}}}=\sigma$.
\end{definition}

\begin{remark}
  It is immediate to see that a coCartesian 2-simplex $\sigma:\Delta^2 \to X$ defines in particular, a coCartesian edge in the mapping space $X(a,b)$ described in \autoref{defn:mappingspace} where $\sigma(0)=a$ and $\sigma(2)=b$. We wish to show that this property precisely characterizes coCartesian triangles. The proof of this later fact will involve a little bit of work.
\end{remark}

\begin{lemma}\label{lem:n+1face}
  Let $X$ be an $\infty$-bicategory and consider an $n$-simplex $\sigma: \Delta^n \to X$. Suppose that there exists some $k<n$ such that the restriction of $\sigma$ to $\Delta^{[0,k]}$ is degenerate on $\sigma(0)=a$. Then there exists a morphism
  \[
    \func{\hat{\sigma}:\Delta^{n+1} \to X}
  \]
with the following properties:
\begin{itemize}
  \item The restriction of $\hat{\sigma}$ to its $(k+1)$-face equals $\sigma$.
  \item The restriction of $\hat{\sigma}$ to $\Delta^{[0,k+1]}$ is degenerate on $a$.
  \item For every $k+2 \leq j \leq n+1$ the 2-simplex $\Delta^{\set{k+1,k+2,j}}$ is thin in $X$.
\end{itemize}
\end{lemma}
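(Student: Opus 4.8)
The plan is to construct $\hat\sigma$ by induction on $n$ (equivalently on $n-k$), realizing it as a filler of a scaled anodyne inclusion against the fibrant object $X$. The starting observation is that the degeneracy $s^k\sigma\colon\Delta^{n+1}\to X$ already satisfies the first two conditions: the simplicial identity $d^{k+1}s^k=\mathrm{id}$ gives $d^{k+1}(s^k\sigma)=\sigma$, and since $s^k$ maps $[0,k+1]$ into $[0,k]$ while $\sigma|_{[0,k]}$ is constant at $a$, the restriction of $s^k\sigma$ to $[0,k+1]$ is degenerate on $a$. What fails is the third condition, since $s^k\sigma|_{\{k+1,k+2,j\}}$ is exactly $\sigma|_{\{k,k+1,j-1\}}$, which need not be thin. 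The entire difficulty is therefore to replace $s^k\sigma$ by a simplex still equal to $\sigma$ on the $d^{k+1}$-face and degenerate on $[0,k+1]$, but for which every triangle through the edge $\{k+1,k+2\}$ is thin. For the base case $n=k+1$ the third condition is vacuous --- the only admissible index is $j=k+2$, whose ``triangle'' $\{k+1,k+2,k+2\}$ is degenerate, hence thin --- so $\hat\sigma:=s^k\sigma$ works.

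For the inductive step I would first apply the inductive hypothesis to the face $d^n\sigma\colon\Delta^{n-1}\to X$ (which again has degenerate initial $(k+1)$-segment, as $k\le n-2$ here), obtaining a simplex that carries the thin triangles $\{k+1,k+2,j\}$ for every $j\le n$. I would then assemble a subcomplex $A\subset\Delta^{n+1}$ on which $\hat\sigma$ is already forced: $\sigma$ on the $d^{k+1}$-face; the simplex just produced on the $d^{n+1}$-face; canonical degenerate simplices on the faces meeting the collapsed segment $[0,k+1]$ (legitimate precisely because the edges $\{0,1\},\dots,\{k,k+1\}$ are all degenerate on $a$); and a single thin triangle on $\{k+1,k+2,n+1\}$, produced by filling the scaled inner horn of type \ref{item:anodyne-inner} in \autoref{def:scanodyne} for $\Lambda^2_1\to\Delta^2$, whose solution is automatically thin. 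Solving the lifting problem $A\hookrightarrow\Delta^{n+1}$ against $X\to\Delta^0$ then yields $\hat\sigma$, and reading off its faces gives the three asserted properties; the only genuinely new triangle, $\{k+1,k+2,n+1\}$, is thin by construction, while those with $j\le n$ descend from the inductive hypothesis.

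The main obstacle is the verification that the decorated inclusion $A\hookrightarrow\Delta^{n+1}$ is scaled anodyne. I would handle this by the now-familiar filtration argument, adjoining the non-degenerate simplices of $\Delta^{n+1}$ not already in $A$ one at a time and checking that each attaching inclusion is either an inner horn (type \ref{item:anodyne-inner}), available because the consecutive triangle $\{k+1,k+2,k+3\}$ is scaled, or --- when an outer face at the initial vertex is involved --- a left-degenerate outer horn (type \ref{item:anodyne_outer}), which is legitimate exactly because the edge $\{0,1\}$ is collapsed onto $a$. The one non-formal point is propagating thinness from the single consecutive triangle $\{k+1,k+2,k+3\}$ to every $\{k+1,k+2,j\}$ with $j>k+3$; for this I would invoke the saturation generator \ref{i:saturation} (the $\Delta^4$-move) repeatedly, in the spirit of \autoref{lem:indI}, deducing thinness of $\{k+1,k+2,j\}$ from that of the consecutive triangles together with the faces already made thin by the inductive hypothesis. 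Managing the overlaps of the prescribed faces of $A$, and the boundary cases (where $k+2$ meets $n+1$, or $k=n-1$), is the fiddly remainder, but it introduces no idea beyond the base case.
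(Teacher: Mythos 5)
Your proposal is correct in outline and, once unwound, its combinatorial core coincides with the paper's: the subcomplex $A$ you assemble (the $(k+1)$-face carrying $\sigma$, the $(n+1)$-face, and the one extra triangle $\Delta^{\{k+1,k+2,n+1\}}$) is literally the paper's $R^n_k$, and both arguments ultimately rest on showing that $R^n_k\hookrightarrow \Delta^{n+1}$, suitably scaled, is scaled anodyne via a face-by-face filtration. Where you genuinely diverge is in how the map $R^n_k\to X$ is produced. The paper first defines the map only on $L^n_k$ (the $(k+1)$-face together with the special triangles, filled in by ``obvious composites'') and then runs a second scaled-anodyne filtration $L^n_k\to R^n_k$ to populate the $(n+1)$-face; you instead obtain the $(n+1)$-face in one stroke by applying the lemma inductively to $d^n\sigma$ (checking, correctly, that the two prescribed faces agree on their common $(n-1)$-face and that the remaining triangle is a thin $\Lambda^2_1$-filler). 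This makes the induction on $n-k$ explicit at the level of the statement, eliminates one of the paper's two filtrations, and in fact generalizes the paper's own observation that for $n-k=2$ one may simply set $d_{n+1}(\rho)=s_k(d_n(\sigma))$. Two caveats on your sketch: the intermediate attaching maps in the filtration of $R^n_k\hookrightarrow\Delta^{n+1}$ are \emph{not} single horns of type \ref{item:anodyne-inner} or \ref{item:anodyne_outer} --- adding a face $d^i$ when only two faces are present meets the existing subcomplex in a generalized horn, which is why the paper needs the $A^n_{(k,i)}$, $B^n_{(k,j)}$ bookkeeping and why your appeal to the method of \autoref{lem:indI} is the right (and necessary) fix; and your invocation of the saturation generator \ref{i:saturation} to propagate thinness is superfluous in your own setup, since every triangle $\Delta^{\{k+1,k+2,j\}}$ required to be thin already lies in $A$ (those with $j\leq n$ by the inductive hypothesis, the one with $j=n+1$ by the explicit thin filler) before any extension takes place.
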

\begin{proof}
  Our first observation is that if $k=n-1$ then we can define $\hat{\sigma}=s_{n-1}(\sigma)$ and this provides the desired solution. We will assume for the rest of the proof that $n-k >1$. In order to tackle the more general cases we define a subsimplicial set $\iota:R^{n}_k \to \Delta^{n+1}$ consisting precisely of those simplices $\theta:\Delta^k \to \Delta^{n+1}$ satisfying \emph{at least one} of the following conditions
  \begin{itemize}
    \item[a)] The simplex $\theta$ skips the vertex $k+1$.
    \item[b)] The simplex $\theta$ skips the vertex $n+1$.
    \item[c)] The simplex $\theta$ is one of the triangles $\Delta^{\set{k+1,k+2,j}}$ for $k+2 <j \leq n+1$
  \end{itemize}
  We endow $\Delta^{n+1}$ with a scaling by scaling those triangles contained in $\Delta^{[0,k+1]}$ in addition to the triangles $\Delta^{\set{k+1,k+2,j}}$ for $k+2 <j \leq n+1$. The proof will performed in two steps: First we will show that $\iota$ is an scaled anodyne morphism. Finally, we will produce an extension of $\sigma$ to $R^n_k$. 

  Inductively define simplicial sets $A^n_{(k,i)}$ from $A^n_{(k,i-1)}$ by adding the face missing $i$ where $1 \leq i \leq k$ and where we are using the convention $A^{n}_{(k,0)}=R^n_k$. Let $B^n_{(k,n)}$ be the simplicial set obtained from $A^n_{(k,k)}$ by adding the face missing $n$. We inductively define $B^n_{(k,j)}$ from $B^n_{(k,j+1)}$ by adding the face missing $j$ for $k+3\leq j \leq n$ with the convention $A^n_{(k,k)}=B^{n}_{(k,n+1)}$. This yields a filtration
  \[
    \func{R^n_k \to A^{n}_{(k,1)} \to \cdots \to A^n_{(k,k)}\to B^n_{(k,n)} \to  \cdots \to B^n_{(k,k+3)} \to \Delta^{n+1}}
  \]
  We wish to show that each step in the filtration is given by an scaled anodyne morphism. Note that $B^n_{(k,k+3)}$ contains all faces except the face missing $0$ and the face missing $k+2$. Since the triangle $\Delta^{\set{k+1,k+2,k+3}}$ is thin it is easy to verify that the last step in our filtration is scaled anodyne. We observe that we can produce pushout diagrams
  \[
    \begin{tikzcd}[ampersand replacement=\&]
      A^{n-1}_{(k-1,i)} \arrow[r] \arrow[d] \& \Delta^n \arrow[d] \\
      A^n_{(k,i)} \arrow[r] \& A^n_{(k,i+1)}
    \end{tikzcd} \quad \quad \begin{tikzcd}[ampersand replacement=\&]
      B^{n-1}_{(k,j-1)} \arrow[r] \arrow[d] \& \Delta^n \arrow[d] \\
      B^n_{(k,j)} \arrow[r] \& B^n_{(k,j-1)}
    \end{tikzcd}
  \]
which hints at the existence of an inductive proof. The ground case we need to show is $n=3$ and $k=2$. In this setting the filtration is of the form
\[
  \func{R^3_2 \to A^{3}_{(1,1)} \to \Delta^{4}.}
\]
It is an easy exercise to verify the claim in this particular case. The claim follows easily by induction.

To finish the proof we need to produce the extension from $\sigma: \Delta^n \to X$ to a map $\rho:R^n_k \to X$. We define $L^{n}_k$ as the subsimplicial of $R^n_k$ consisting in those simplices satisfying conditions $a)$ or $c)$. We define $\rho(k+1 \to k+2)=\sigma(k \to k+1)$ and extend $\sigma$ to $L^n_k$ by picking the obvious composites of morphisms. Note that if $n-k=2$ then we can produce the desired extension by just setting $d_{n+1}(\rho)=s_{k}(d_n(\sigma))$. Therefore will assume that $L^n_k$ already contains those simplices that factor through $\Delta^{[0,k+2]}$. To finish the proof we will show that $L^n_k \to R^n_k$ is scaled anodyne. 
\[
  \alpha_{k+j}: \Delta^{[0,k+1]} \to R^n_k \quad, \quad \text{ for } 3\leq j \leq n-1
\]
Let us set $C^n_{(k,2)}=L^n_k$. We define inductively $C^n_{(k,j)}$ by attaching the simplices $\alpha_{k+j}$ to $C^n_{(k,j-1)}$. We obtain our final filtration
\[
  \func{L^n_k \to C^n_{(k,3)} \to \cdots \to C^n_{(k,n-1)}=R^n_k}.
\]
Note that we have pushout diagrams
\[
  \begin{tikzcd}[ampersand replacement=\&]
    R^{[0,k+j]}_{k} \arrow[r] \arrow[d] \& \Delta^{[0,k+j]} \arrow[d,"\alpha_k"] \\
    C^{n}_{(k,j)} \arrow[r] \& C^{n}_{(k,j+1)}
  \end{tikzcd}
\]
where the top horizontal morphism is scaled anodyne by the first part of this proof. The result follows.
\end{proof}

\begin{proposition}\label{prop:characterizationcoCart}
  Let $p: X \to S$ be a weak \sS-fibration. Then a left-degenerate triangle $\sigma: \Delta^2 \to X$  with $\sigma(0)=a$ and $\sigma(2)=b$ is coCartesian if and only if it defines as coCartesian edge in the mapping space $X(a,b)$.
\end{proposition}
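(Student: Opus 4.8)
The plan is to exhibit both coCartesianness conditions as instances of a single family of left-horn lifting problems against $p$, distinguished only by how much of the initial face is forced degenerate, and then to bridge the two degeneracy levels using \autoref{lem:n+1face}.

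First I would set up the dictionary implicit in \autoref{defn:mappingspace}. Unwinding the definition of $Q$, a $k$-simplex of $X(a,b)$ is the same datum as a $(k+1)$-simplex $\tau\colon\Delta^{k+1}\to X$ whose restriction to the initial face $\Delta^{\set{0,\dots,k}}$ is totally degenerate on $a$ and with $\tau(k+1)=b$; under this identification the edge $\bar\sigma$ of $X(a,b)$ corresponding to $\sigma$ is precisely our left-degenerate triangle, viewed on $\Delta^{\set{0,1,k+1}}$. Consequently a horn-filling problem $\Lambda^k_0\to X(a,b)$ over $S(pa,pb)$ whose edge $\Delta^{\set{0,1}}$ is $\bar\sigma$ translates, via $Q$, into exactly a $p$-lifting problem against $Q(\Lambda^k_0)\hookrightarrow Q_k$, i.e.\ into a left-horn filling problem $\Lambda^{k+1}_0\to\Delta^{k+1}$ in $X$ (with designated triangle $\sigma$ on $\Delta^{\set{0,1,k+1}}$) in the special case where the \emph{entire} initial face $\Delta^{\set{0,\dots,k}}$ is degenerate on $a$. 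By contrast, the defining condition \enquote{$\sigma$ is $p$-coCartesian} asks for fillers of such left-horn problems $\Lambda^n_0\to\Delta^n$ in which only the edge $\Delta^{\set{0,1}}$ is forced degenerate while the intermediate vertices are arbitrary. Both conditions are thus right-lifting statements against collapsed left-horns of the form appearing in the third family of generating scaled anodyne maps (\autoref{def:scanodyne}), with $\sigma$ in place of a thin triangle; they differ only in the amount of collapsing of the base.

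For the forward implication, the fully-collapsed problems are a special case of the general ones: a filling problem $Q(\Lambda^k_0)\to X$ factors through the $\Delta^{\set{0,1}}$-collapse, and the extra identifications coming from collapsing $\Delta^{\set{2,\dots,k}}$ only produce degenerate, hence thin, simplices. Writing $Q(\Lambda^k_0)\hookrightarrow Q_k$ as a composite of the relevant $\set{0,1}$-collapsed left-horn inclusion together with pushouts of inner-horn and other scaled anodyne generators, the hypothesis that $\sigma$ is $p$-coCartesian together with the fact that $p$ is a weak \sS-fibration produces the desired lift. Hence $\bar\sigma$ is a coCartesian edge of $X(a,b)$; this is the \enquote{immediate} direction.

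The converse is the substance of the proof, and the main obstacle. Here we are handed a general left-horn problem $\Lambda^n_0\to X$ over $\Delta^n\to S$ with $\restr{f}{\Delta^{\set{0,1,n}}}=\sigma$ (so only $\Delta^{\set{0,1}}$ is degenerate), and we must solve it knowing only that the fully-collapsed problems are solvable (coCartesianness of $\bar\sigma$) and that $p$ lifts scaled anodyne maps. The plan is to promote the degeneracy of the initial face one vertex at a time: since $\restr{f}{\Delta^{\set{0,1}}}$ is degenerate on $a$, \autoref{lem:n+1face} (applied first with $k=1$, then iteratively with increasing $k$) lets us extend the given data by one dimension, producing configurations whose initial face is degenerate on $a$ up to a longer segment and all of whose newly created triangles $\Delta^{\set{k+1,k+2,j}}$ are thin. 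This thinness is exactly what guarantees that each such extension is a pushout of scaled anodyne maps, so that, for the weak \sS-fibration $p$, solving the enlarged problem is equivalent to solving the original. Iterating until the entire base is degenerate reduces the problem to a fully-collapsed left-horn filling of the type $Q(\Lambda^k_0)\hookrightarrow Q_k$, which is solved by the coCartesianness of $\bar\sigma$; restricting the resulting filler back down yields the required lift. The hard part is the bookkeeping of this reduction: organizing the iterated application of \autoref{lem:n+1face} into a filtration of the left-horn inclusion by scaled anodyne pushouts terminating in a single mapping-category coCartesian filling, all while keeping the scaling of the auxiliary triangles consistent so that the scaled anodyne machinery applies at every step.
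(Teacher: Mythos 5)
Your proposal follows essentially the same route as the paper: the forward direction is the observation that mapping-space horn problems are special (fully collapsed) instances of the lifting problems defining coCartesian triangles, and the converse is an induction that uses \autoref{lem:n+1face} to extend the given horn problem to one on $\Delta^{n+1}$ with a longer degenerate initial segment and thin auxiliary triangles $\Delta^{\set{k+1,k+2,j}}$, filling the intermediate faces by scaled anodyne extensions until the remaining horn lives in $X(a,b)$ and is filled by coCartesianness of $\bar\sigma$. The combinatorial bookkeeping you defer (the subcomplexes $P^n_k$, $S^n_k$, $T^n_k$ and the filtrations by scaled anodyne pushouts) is exactly what the paper's proof supplies, so the approaches coincide.
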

\begin{proof}
  It is immediate that if $\sigma$ is coCartesian then it defines a coCartesian edge in the corresponding mapping space. For the converse let $n\geq 3$ and consider a lifting problem 
  \[
    \begin{tikzcd}[ampersand replacement=\&]
      \Lambda^n_0 \arrow[r,"f"] \arrow[d] \& X \arrow[d,"p"] \\
      \Delta^{n} \arrow[r,"\alpha"] \& S
    \end{tikzcd}
  \]
  such that $\restr{f}{\Delta^{\set{0,1,n}}}=\sigma$. We define $1 \leq k \leq n-1$ to be the biggest integer such that the restriction of $f$ to $\Delta^{[0,k]}$ is degenerate on $a$. Note that if $n-k=1$ then the lifting problem takes place in the mapping space $X(a,b)$ and the solution is guaranteed. We define a subsimplicial set $P^{n}_k \subset \Delta^{n+1}$ consisting of those simplices $\rho: \Delta^k \to \Delta^{n+1}$ satisfying \emph{at least one} of the following conditions
  \begin{itemize}
    \item[a)] The simplex $\rho$ skips the vertex $n+1$.
    \item[b)] The simplex $\rho$ skips a pair of vertices $(k+1,i)$ with $i\neq 0$.
    \item[c)] The simplex $\rho$ factors through $\Delta^{\{k+1,k+2,j\}}$ with $k+2<j \leq n+1$.
  \end{itemize}
 Now we can apply \autoref{lem:n+1face} to the simplex $\alpha$ to obtain a map $\hat{\alpha}:\Delta^{n+1}\to S$ satisfying the conditions stated in the lemma. Our first goal is to produce a commutative diagram
  \[
    \begin{tikzcd}[ampersand replacement=\&]
      P^{n}_k \arrow[r,"\hat{f}"] \arrow[d] \& X \arrow[d,"p"] \\
      \Delta^{n+1} \arrow[r,"\hat{\alpha}"] \arrow[ur,dotted,"\epsilon"] \& S
    \end{tikzcd}
  \]
  since any dotted arrow as above will provide a solution to the original lifting problem. Let $M^n_k \subset P^n_k$ be the subsimplicial subset consisting in those simplices satisfying $a)$ or $c)$. It follows easily from the assumption that $p$ is a weak \sS-fibration that we have a commutative diagram
  \[
    \begin{tikzcd}[ampersand replacement=\&]
      M^{n}_k \arrow[r] \arrow[d] \& X \arrow[d,"p"] \\
      \Delta^{n+1} \arrow[r,"\hat{\alpha}"]  \& S
    \end{tikzcd}
  \]
  It follows from \autoref{lem:n+1face} that the restriction of $\hat{\alpha}$ to $\Delta^{[0,k+2]}$ equals $s_{k}(d_n(\alpha))$. In particular  we can assume that $M^n_k$ contains the simplex $\Delta^{[0,k+2]}$. We then observe that we have a pushout diagram
  \[
    \begin{tikzcd}[ampersand replacement=\&]
      L^{n-1}_k \arrow[r] \arrow[d] \& \Delta^n \arrow[d] \\
      M^n_k \arrow[r] \& P^n_k
    \end{tikzcd}
  \]
where the top horizontal morphism is scaled anodyne. It follows that the desired extension $\hat{f}:P^n_k \to X$ exists. To finish the proof we will construct the dotted arrow $\epsilon$ above. Let $S^n_k$ be the subsimplicial subset of $\Delta^{n+1}$ consisting in those simplices belonging to $P^n_k$ in addition to the faces  that skip the vertices $i$ for $1 \leq i \leq k$. A totally analogous argument as that for \autoref{lem:n+1face} shows that the map inclusion $P^n_k \to S^n_k$ is scaled anodyne. We can now add the faces that skip the vertices $k+2 \leq j\leq n$ to obtain a new simplicial set $T^n_k$. We observe that $T^n_k$ only misses the $(k+1)$-face and the $0$-face since the triangle $\Delta^{\set{k,k+1,k+2}}$ must be thin. It is easy to see that $T^n_k \to \Delta^{n+1}$ is scaled anodyne. To finish the proof provide a solution to the lifting problem
  \[
    \begin{tikzcd}[ampersand replacement=\&]
      S^{n}_k \arrow[r] \arrow[d] \& X \arrow[d,"p"] \\
      T^n_k \arrow[r] \arrow[ur,dotted,"\varphi"] \& S
    \end{tikzcd}
  \]
  We define $D^{n}_{(k,n)}$ by adding to $S^n_k$ the face missing the vertex $n$. We define $D^{n}_{(k,j-1)}$ by adding to $D^n_{(k,j)}$ the face missing $j$ for $k+2 \leq j \leq n$. This produces a filtration
  \[
    \func{S^n_k \to D^{n}_{(k,n)} \to \cdots D^{n}_{(k,k+2)}=T^n_k}
  \]
We will show how to produce the solution by extending the map stepwise. As usual, we produce a pushout diagram
\[
  \begin{tikzcd}[ampersand replacement=\&]
    D^{n-1}_{(k,j-1)} \arrow[r] \arrow[d] \& \Delta^n \arrow[d] \\
    D^n_{(k,j)} \arrow[r] \& D^n_{(k,j-1)}
  \end{tikzcd}
\]
Now we observe that if $n-k=2$ then original filtration is of the form
\[
  \func{S^n_{n-2} \to D^n_{(n-2,n)}=T^n_{n-2}}
\]
the previously depicted pushout diagram particularizes now to
\[
  \begin{tikzcd}[ampersand replacement=\&]
    \Lambda^{n}_0 \arrow[r] \arrow[d] \& \Delta^n \arrow[d] \\
    S^n_{n-2} \arrow[r] \& T^n_{n-2}
  \end{tikzcd}
\]
where the left-most $\Lambda^n_0$ represents an $0$-horn in the mapping space and thus the existence of the extension is guaranteed. An inductive argument shows that we can produce the map $\varphi$ and the proof is concluded.
\end{proof}

\begin{definition}
  We say that $p:X \to S$ is \emph{locally 
  fibred} if it satisfies the conditions
  \begin{itemize}
     \item[i)] The map $p: X \to S$ is a weak \sS-fibration.
     \item[ii)] For every left-degenerate $\widetilde{\sigma}: \Delta^2 \to S$ together with  $\tau:\Delta^1 \to X$ such that $\restr{\widetilde{\sigma} }{\Delta^{\set{0,2}}}=p(\tau)$, then there exists a left-degenerate simplex $\sigma: \Delta^2 \to X$ such that $\sigma$ is coCartesian and $p(\sigma)=\widetilde{\sigma}$.
  \end{itemize} 
\end{definition}

The following proposition follows immediately from our definitions.

\begin{proposition}\label{prop:mappingcoCart}
 Let $p:X \to S$ locally 
 fibred. The given $a,b \in X$ a pair of objects it follows that the induced morphism on mapping spaces
 \[
   \func{p_{a,b}: X(a,b)\to S(p(a),p(b))}
 \]
 is a coCartesian fibration of $\infty$-categories.
\end{proposition}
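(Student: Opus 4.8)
The plan is to check the two defining conditions of a coCartesian fibration of $\infty$-categories directly for $p_{a,b}$: that it is an inner fibration, and that every edge of $S(p(a),p(b))$ equipped with a lift of its source admits a $p_{a,b}$-coCartesian lift. Both $X(a,b)$ and $S(p(a),p(b))$ are $\infty$-categories by \autoref{defn:mappingspace} (following \cite{GHL_equiv}), and $p_{a,b}$ is the evident induced functor, so it is enough to establish these two properties.

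For the inner fibration condition I would unwind \autoref{defn:mappingspace}: an $n$-simplex of $X(a,b)$ is a map $Q_n \to X$ carrying the collapsed initial vertex to $a$ and the cone vertex to $b$, where $Q_n = \Delta^0 \coprod_{\Delta^n}(\Delta^n \star \Delta^0)$. A lifting problem for $p_{a,b}$ against an inner horn $\Lambda^n_i \hookrightarrow \Delta^n$ (with $0<i<n$) is then adjoint, through the colimit-preserving extension $K \mapsto \Delta^0 \coprod_K (K \star \Delta^0)$, to a basepoint-preserving lifting problem for $p$ against the induced map $Q(\Lambda^n_i) \to Q(\Delta^n)$. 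Since $p$ is a weak \sS-fibration, it suffices to show this map is scaled anodyne; this is a combinatorial filtration argument in the spirit of \autoref{lem:n+1face}, presenting $Q(\Lambda^n_i) \to Q(\Delta^n)$ as a transfinite composite of pushouts of inner horn inclusions. Equivalently, one may appeal to the fact, extractable from \cite{GHL_equiv}, that the mapping-space functor sends weak \sS-fibrations to inner fibrations.

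The coCartesian lifts then follow immediately from the hypothesis that $p$ is locally fibred together with \autoref{prop:characterizationcoCart}. Unwinding the construction, an edge of $S(p(a),p(b))$ is exactly a left-degenerate triangle $\widetilde{\sigma}: \Delta^2 \to S$ with $\widetilde{\sigma}(0)=p(a)$ and $\widetilde{\sigma}(2)=p(b)$, and its source is $\widetilde{\sigma}|_{\Delta^{\{0,2\}}}$; a lift of that source to a vertex of $X(a,b)$ is a $1$-morphism $\tau$ with $p(\tau)=\widetilde{\sigma}|_{\Delta^{\{0,2\}}}$. Condition (ii) in the definition of locally fibred produces a left-degenerate $\sigma: \Delta^2 \to X$ which is $p$-coCartesian, satisfies $p(\sigma)=\widetilde{\sigma}$, and restricts to $\tau$ on $\Delta^{\{0,2\}}$. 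By \autoref{prop:characterizationcoCart}, $\sigma$ being $p$-coCartesian is the same as $\sigma$ being a $p_{a,b}$-coCartesian edge of $X(a,b)$; this edge lifts $\widetilde{\sigma}$ and has source $\tau$, which is precisely the desired coCartesian lift.

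Assembling the two points, $p_{a,b}$ is an inner fibration in which every edge of the base admits a coCartesian lift over any chosen lift of its source, hence a coCartesian fibration of $\infty$-categories. I expect the only genuine difficulty to be the inner fibration condition: the coCartesian lifts are packaged cleanly by condition (ii) and \autoref{prop:characterizationcoCart}, so once the anodyne behavior of $Q(\Lambda^n_i) \to Q(\Delta^n)$ is in hand (or cited from \cite{GHL_equiv}), the conclusion is indeed immediate.
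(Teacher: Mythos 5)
Your proposal is correct and matches the paper's intent exactly: the paper offers no written proof (it asserts the proposition ``follows immediately from our definitions''), and your unwinding --- identifying edges of the mapping spaces with left-degenerate triangles, getting coCartesian lifts from condition (ii) of local fibredness together with \autoref{prop:characterizationcoCart}, and deducing the inner fibration property from the weak $\sS$-fibration hypothesis --- is precisely the intended argument. The only simplification available is that the adjoint lifting problem for $Q(\Lambda^n_i)\to Q(\Delta^n)$ reduces to a single inner horn $\Lambda^{n+1}_i\subset\Delta^{n+1}$ whose pivot triangle $\Delta^{\{i-1,i,i+1\}}$ lies in the collapsed $\Delta^n$ and is therefore degenerate, hence thin, so no filtration is needed.
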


\begin{definition}
  Let $\sigma,\tau:\Delta^2 \to X$ be a pair of 2-simplices such that $\tau$ is left-degenerate. We say $\tau$ is the \emph{left-degeneration} of $\sigma$ if there exists a 3-simplex $\rho:\Delta^3 \to X$ with the following properties:
  \begin{itemize}
    \item The face $d_3(\rho)$ equals $s_0(d_2(\sigma))$.
    \item The face $d_2(\rho)$ equals $\tau$.
    \item The face $d_1(\rho)$ equals $\sigma$.
    \item The face $d_0(\rho)$ is thin in $X$.
  \end{itemize}
\end{definition}

\begin{remark}
  We remark that if $X$ is an $\infty$-bicategory then the left-degeneration of a 2-simplex always exist. It is trivial to see that every left-degenerate triangle is its own left-degeneration.
\end{remark}

\begin{definition}
  We say that a triangle $\sigma:\Delta^2 \to X$ is \emph{coCartesian} if its left-degeneration is coCartesian. We denote the collection of coCartesian triangles by $C_X$.
\end{definition}

\begin{definition}
  Let $p:X \to S$ be a weak \sS-fibration.  We say that the collection of coCartesian triangles $C_X$, is a \emph{functorial family} if the following holds:
  \begin{itemize}
    \item Let $0<i<3$ and suppose we are given a three simplex $\rho: \Delta^3 \to X$ such that the face $\Delta^{\set{i-1,i,i+1}}$ is thin and all of the faces of $\rho$ are coCartesian except possibly the face missing $i$. Then the image of $\rho$ only consists in coCartesian triangles.
  \end{itemize}
\end{definition}

\begin{definition}
  Let $p:X \to S$ be a locally fibred morphism. We say that $p$ is \emph{functorially fibred} if the collection of coCartesian triangles is functorial.
\end{definition}

\begin{lemma}\label{lem:fibrants4}
  Let $p: X \to S$ be a functorially fibred map. Given a left-degenerate three simplex $\rho: \Delta^3 \to X$ such that all of its faces except possibly the 0-face belong to $C_X$ then it follows that the 0-face must also belong to $C_X$. 
\end{lemma}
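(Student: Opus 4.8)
The plan is to pass to the mapping $\infty$-categories of \autoref{prop:mappingcoCart} and reduce the statement to the two-out-of-three property for coCartesian edges there. Write $a := \rho(0) = \rho(1)$, $v := \rho(2)$, $w := \rho(3)$, and put $f_1 := \rho|_{\Delta^{\{0,2\}}}$, $f_0 := \rho|_{\Delta^{\{1,2\}}}$, $g := \rho|_{\Delta^{\{2,3\}}}$, $h_1 := \rho|_{\Delta^{\{0,3\}}}$, $h_0 := \rho|_{\Delta^{\{1,3\}}}$. Since $\rho$ is left-degenerate, the faces $d_3(\rho)$ and $d_2(\rho)$ are themselves left-degenerate, so by \autoref{prop:characterizationcoCart} they are coCartesian edges $f_1 \to f_0$ in $X(a,v)$ and $h_1 \to h_0$ in $X(a,w)$ respectively. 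The faces $d_1(\rho)$ and $d_0(\rho)$ are honest triangles; computing their left-degenerations (which exist because $X$ is an $\infty$-bicategory) gives edges $\overline{d_1}\colon h_1 \to g\circ f_1$ and $\overline{d_0}\colon h_0 \to g\circ f_0$ of $X(a,w)$. Unwinding the definition of $C_X$ through \autoref{prop:characterizationcoCart}, the hypothesis is exactly that $d_2(\rho)$, $\overline{d_1}$ (and $d_3(\rho)$) are coCartesian, and the goal is to prove that $\overline{d_0}$ is coCartesian.

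The key intermediate step, and the only place the functorial family hypothesis is used, is to show that post-composition with $g$ carries the coCartesian edge $d_3(\rho)$ to a coCartesian edge $g\circ f_1 \to g\circ f_0$ of $X(a,w)$. To see this I build a left-degenerate $3$-simplex $\psi$ on the vertices $(a,a,v,w)$ by filling the inner horn $\Lambda^3_2$ whose three faces are $d_3(\psi) := d_3(\rho)$ together with two thin triangles $d_1(\psi)$, $d_0(\psi)$ witnessing the composites $g\circ f_1$ and $g\circ f_0$; these agree on their shared edges, and the filler exists because the face $\Delta^{\{1,2,3\}} = d_0(\psi)$ is thin, so the horn inclusion is scaled anodyne (\autoref{def:scanodyne}). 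The missing face $d_2(\psi)$ is then the left-degenerate triangle representing $g\circ f_1 \to g\circ f_0$. Applying the functorial family property with $i=2$, which is legitimate since $\Delta^{\{1,2,3\}}$ is thin while $d_0(\psi)$, $d_1(\psi)$ (thin, hence coCartesian) and $d_3(\psi)$ are all coCartesian, forces $d_2(\psi)$, and hence $g\circ d_3(\rho)$, to be coCartesian.

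I then finish inside the coCartesian fibration $X(a,w) \to S(p(a),p(w))$ of \autoref{prop:mappingcoCart}. The simplex $\rho$, together with the fillers chosen above, exhibits a commuting square in $X(a,w)$ identifying the two composites $\overline{d_0}\circ d_2(\rho)$ and $(g\circ d_3(\rho))\circ \overline{d_1}$, both edges $h_1 \to g\circ f_0$. The second is a composite of coCartesian edges and is therefore coCartesian, so $\overline{d_0}\circ d_2(\rho)$ is coCartesian; since $d_2(\rho)$ is coCartesian, the standard fact that for a $2$-simplex with coCartesian first edge the second edge is coCartesian precisely when the long edge is (two-out-of-three for coCartesian morphisms) then yields that $\overline{d_0}$ is coCartesian, i.e. $d_0(\rho) \in C_X$. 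I expect the genuine obstacle to be the construction of this commuting square: one must extract it from $\rho$, correctly matching the sources and targets of the left-degenerations and realizing the homotopy as an explicit simplex assembled from $\rho$ and the fillers of $\psi$, rather than merely reading off the relation in the homotopy bicategory. Once the square is in hand, the two-out-of-three input is formal, given that $X(a,w)$ is coCartesianly fibred over the base.
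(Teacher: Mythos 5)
Your overall strategy is genuinely different from the paper's. The paper disposes of this lemma in one line by invoking \autoref{lem:n+1face}: one extends the left-degenerate $\rho$ to a $4$-simplex with an additional degeneracy on $\Delta^{[0,2]}$ and a controlled thin triangle $\Delta^{\{2,3,4\}}$, and then feeds suitable $3$-dimensional faces of that extension to the functorial-family condition; everything stays at the level of simplices of $X$. You instead translate the statement into the coCartesian fibration $X(a,w)\to S(p(a),p(w))$ of \autoref{prop:mappingcoCart} and finish by left cancellation for coCartesian edges. The parts of your argument that are actually written out are correct: the dictionary via \autoref{prop:characterizationcoCart}, the construction of $\psi$ by filling a $\Lambda^3_2$-horn whose face $\Delta^{\{1,2,3\}}$ is thin, the application of the functorial-family condition with $i=2$ to conclude that $d_2(\psi)$ (your $g\circ d_3(\rho)$) is coCartesian (this tacitly uses that thin triangles belong to $C_X$, which holds but deserves a word), and the final cancellation step are all sound \emph{granted the commuting square}.

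The gap is exactly where you flag it, and it cannot be deferred: you never produce the $2$-simplices of $X(a,w)$ witnessing $\overline{d_0}\circ d_2(\rho)\simeq \bigl(g\circ d_3(\rho)\bigr)\circ\overline{d_1}$. A $2$-simplex of $X(a,w)$ is by definition a map $\Delta^3\to X$ that is \emph{constant} on $\Delta^{\{0,1,2\}}$, so the square cannot simply be read off from $\rho$, $\psi$ and the two $3$-simplices witnessing the left-degenerations of $d_1(\rho)$ and $d_0(\rho)$: none of those simplices lies in $X(a,w)$. One must assemble a $4$-simplex on the vertices $(a,a,a,v,w)$ containing $\rho$ and the relevant degeneration witness as facets, arrange for its restriction to $\Delta^{[0,2]}$ to be degenerate so that the facet on $\{0,1,2,4\}$ genuinely defines a $2$-simplex of $X(a,w)$ with the prescribed boundary edges, and verify that the horns being filled along the way are scaled anodyne over $S$. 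That construction is precisely the combinatorial content the paper packages into \autoref{lem:n+1face}, and it is the entire substance of the lemma; as written, your proof reduces the statement to an unproved assertion of comparable difficulty. So: a viable and genuinely different route, but the load-bearing step is missing.
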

\begin{proof}
  This lemma is an immediate application of \autoref{lem:n+1face}.
\end{proof}

\begin{definition}
  Let $p:X \to S$ be a functorially fibred map. We say that an edge $e:\Delta^1 \to X$ is \emph{strongly} p-cartesian (resp. $p$-Cartesian) if every lifting problem
  \[
     \begin{tikzcd}[ampersand replacement=\&]
    \Lambda^n_n \arrow[r,"f"] \arrow[d] \& X \arrow[d,"p"] \\
    \Delta^{n} \arrow[r] \arrow[ur,"\hat{f}"] \& S
     \end{tikzcd}
  \]
  admits a solution for $n\geq 2$ provided the following conditions are satisfied:
  \begin{itemize}
    \item[i)] $\restr{f}{\Delta^{\set{n-1,n}}}=e$.
    \item[ii)] $\restr{f}{\Delta^{\set{0,n-1,n}}}$ is coCartesian (resp thin).
  \end{itemize}
  In the case $n=2$, we  can always produce a coCartesian triangle (resp. thin) $\hat{f}$ as long as $\restr{f}{\Delta^{\set{n-1,n}}}=e$.
\end{definition}

\begin{definition}
  We say that a functorially fibred map $p:X \to S$ is an outer 2-fibration or $\mathbf{O2}$-fibration, if every degenerate edge is $p$-Cartesian.
\end{definition}

\begin{remark}
  Observe that an $\mathbf{O2}$-fibration is a \emph{weak fibration} in the terminology of \cite{GHL_LaxLim}.
\end{remark}

\begin{lemma}\label{lem:fibrants5}
  Let $p:X \to S$ be a $\mathbf{O2}$-fibration. Given a 3-simplex $\rho:\Delta^3 \to X$ such that
  \begin{itemize}
    \item The restriction $\restr{\rho}{\Delta^{\set{2,3}}}$ is a $p$-Cartesian edge.
    \item Every face of $\rho$ belongs to $C_X$ except possibly the face missing 3.
  \end{itemize}
  Then every face of $\rho$ belongs to $C_X$.
\end{lemma}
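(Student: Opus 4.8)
Since $d_0(\rho),d_1(\rho),d_2(\rho)\in C_X$ by hypothesis, the only thing left to prove is that the remaining face $d_3(\rho)=\restr{\rho}{\Delta^{\set{0,1,2}}}$ also lies in $C_X$. Writing $a=\rho(0)$ and $c=\rho(2)$, the definition of a coCartesian triangle together with \autoref{prop:characterizationcoCart} reduces this to a single statement: the left-degeneration of $d_3(\rho)$ determines a coCartesian edge of the mapping $\infty$-category $X(a,c)$. The conceptual content I would keep in mind is that the $p$-Cartesian $1$-morphism $\restr{\rho}{\Delta^{\set{2,3}}}\colon c\to\rho(3)$ induces, via post-composition, a functor $X(a,c)\to X(a,\rho(3))$ over $S$ between the coCartesian fibrations of \autoref{prop:mappingcoCart}, under which $\rho$ exhibits the image of $d_3(\rho)$ as assembled from the coCartesian edges $d_1(\rho)$ and $d_2(\rho)$; the Cartesian character of $\restr{\rho}{\Delta^{\set{2,3}}}$ is exactly what should let us transport coCartesianness back to $d_3(\rho)$.

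To make this precise at the simplicial level, the plan is to argue as in the proof of \autoref{lem:fibrants4}, with the $p$-Cartesian edge taking over the role played there by left-degeneracy. Since $\rho$ carries no degenerate initial segment, \autoref{lem:n+1face} is not directly available; instead I would first choose a marked lift witnessing that $\restr{\rho}{\Delta^{\set{2,3}}}$ is $p$-Cartesian, and then use the defining lifting property of $p$-Cartesian edges (the $\Lambda^n_n$-liftings of \autoref{def:mbsanodyne}, together with the dimension-two clause that produces coCartesian and thin triangles over a Cartesian edge) to extend the horn assembled from $d_0(\rho)$, $d_1(\rho)$ and $d_2(\rho)$ to a $4$-simplex $\Omega\colon\Delta^4\to X$ that contains $\rho$ as a face together with one freshly inserted vertex. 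I would arrange $\Omega$ so that it possesses an inner $3$-face having $d_3(\rho)$ among its faces, with the relevant $2$-simplex $\Delta^{\set{i-1,i,i+1}}$ thin (this thin triangle being produced either from a degeneracy at the inserted vertex or from the dimension-two clause of the $p$-Cartesian liftings) and with all its other faces already known to be coCartesian. Invoking the functorial family axiom — i.e.\ that $p$ is functorially fibred — then propagates coCartesianness across this thin triangle, while \autoref{lem:fibrants4} disposes of the one left-degenerate outer face that arises, so that $d_3(\rho)\in C_X$ as desired.

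The hard part will be the bookkeeping in the construction of $\Omega$: one must choose the inserted vertex and the fillings of the auxiliary faces so that (i) every auxiliary $3$-face is a legitimate $p$-Cartesian or inner-horn filling, (ii) each auxiliary $2$-simplex required to be thin genuinely is thin above $S$, and (iii) every auxiliary face other than $d_3(\rho)$ is visibly coCartesian, so that the functorial family axiom applies with no circularity. As in the earlier filtration arguments of this section, verifying these conditions amounts to displaying each intermediate inclusion as a pushout of a generator from \autoref{def:mbsanodyne}; the delicate point peculiar to this lemma is that the decisive liftings are governed by the $p$-Cartesian edge $\restr{\rho}{\Delta^{\set{2,3}}}$ rather than by an inner horn, so the order in which the faces are attached must be chosen to keep that Cartesian edge in terminal position throughout the filtration. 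As in the proof of \autoref{prop:trivfibfibres}, one may assume $S=\Delta^k$ throughout, which makes the required thinness of the auxiliary triangles straightforward to control.
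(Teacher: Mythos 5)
There is a genuine gap at the decisive step. You correctly isolate the problem (showing $d_3(\rho)\in C_X$) and you even state the right conceptual mechanism --- that post-composition with the $p$-Cartesian edge $\alpha=\restr{\rho}{\Delta^{\set{2,3}}}$ induces a comparison of mapping $\infty$-categories along which coCartesianness can be transported backwards --- but your actual proof plan then abandons this and tries to conclude via the functorial family axiom together with \autoref{lem:fibrants4} applied to faces of an auxiliary $4$-simplex $\Omega$. Those two tools can only ever certify coCartesianness of an \emph{inner} face $d_i(\tau)$ ($0<i<3$) of a $3$-simplex $\tau$ with the appropriate thin triangle, or of the $d_0$-face of a \emph{left-degenerate} $3$-simplex; neither produces a $d_3$-face. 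If you try to realize $d_3(\rho)=(a\to b\to c)$ as an inner face of some auxiliary $\tau$, the inserted vertex must sit strictly between $a$ and $c$, so one of the remaining faces of $\tau$ whose coCartesianness you must supply is (up to composition with an equivalence or a degeneracy) the left-degeneration of $d_3(\rho)$ itself --- exactly what you are trying to prove. In particular the Cartesian edge $c\to\rho(3)$ never enters such a $\tau$, so the one hypothesis that makes the lemma true cannot be exploited by this mechanism; the argument is circular. Nothing in your plan explains how the $\Lambda^n_0$-lifting problems defining coCartesianness of (the left-degeneration of) $d_3(\rho)$ actually get solved.

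The paper's proof makes your ``conceptual content'' the literal argument. In the special case where $\restr{\rho}{\Delta^{\set{0,1}}}$ is degenerate, it invokes \cite[Prop.~2.3.3]{GHL_LaxLim} to get a homotopy pullback square relating $X(a,c)$ and $X(a,\rho(3))$ over the corresponding mapping categories of $S$; since the image of $d_3(\rho)$ under $\alpha\circ\mathblank$ is coCartesian and coCartesianness is detected along homotopy pullbacks of coCartesian fibrations (\autoref{prop:mappingcoCart}), the edge $\epsilon\in X(a,c)$ is coCartesian, and \autoref{prop:characterizationcoCart} translates this back into a coCartesian triangle. The general case is then reduced to this one by a $\Lambda^4_2$-extension whose $d_2$-face is a left-degenerate $3$-simplex satisfying the hypotheses and whose $d_3d_2$-face is the left-degeneration of $d_3(\rho)$ --- this is the part of the paper's proof your $\Omega$-construction resembles, but it is only the reduction, not the conclusion. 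Finally, your closing remark that one may assume $S=\Delta^k$ is not legitimate here: coCartesianness of a triangle is a lifting property against arbitrary $\Lambda^n_0\to\Delta^n\to S$, so it cannot be checked after restricting $S$ to a simplex.
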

\begin{proof}
  Let us fix some notation before diving into the proof. We denote $a=\rho(0)$, $c=\rho(2)$, $d=\rho(3)$ and $\restr{\rho}{\Delta^{\{2,3\}}}=\alpha$. First let us assume that $\restr{\rho}{\Delta^{\set{0,1}}}$ is degenerate on $a$. Using Proposition 2.3.3 in \cite{GHL_LaxLim} we obtain a homotopy pullback diagram
  \[
    \begin{tikzcd}[ampersand replacement=\&]
      X(a,c) \arrow[r,"\alpha \circ \mathblank"] \arrow[d] \& X(a,d) \arrow[d] \\
      S(p(a),p(c)) \arrow[r,"p(\alpha) \circ \mathblank"] \& S(p(a),p(d))
    \end{tikzcd}
  \]
  Let $\epsilon \in X(a,c)$ denote the morphism represented by the $3$-face in $\rho$. Then it follows from our hypothesis that its image under postcomposition with $\alpha$ must be coCartesian. Since $X(a,c)$ can be expressed as a homotopy pullback it follows that $\epsilon$ must be coCartesian as an edge in $X(a,c)$. The claim now follows from \autoref{prop:characterizationcoCart}

  To prove the general version of the lemma we will reduce it to the previous case. We will fix once and for all the notation regarding $\rho$ by means of the diagram below
  \[
    \begin{tikzcd}
     a \arrow[rr, "u"] \arrow[rrrr, "w", bend left] \arrow[rrdd,swap, "f"] &  & b \arrow[rr, "v"] \arrow[dd, "g"] &  & c \arrow[lldd, "h"] \\
     &  &                                   &  &                     \\
     &  & d                                 &  &                    
    \end{tikzcd}
  \]
  Let us consider $\Lambda^2_1$ sitting inside the 3-face of $\rho$ and another such horn sitting inside the 2-face of $\rho$. Let $\sigma_3$ (resp. $\sigma_2$) denote the corresponding thin 2-simplices obtained by extending the horns. We denote the 1-face of these thin simplices by $v \circ u$ (resp. $g \circ u$). We define a morphism
  \[
    \func{\Lambda^3_2 \to X}
  \]
  by sending the $0$-face to $\sigma_3$, the $1$-face to $d_3(\rho)$, the $3$-face to $s_0(u)$. Since $X$ is an $\infty$-bicategory we can produce a lift to a 3-simplex that we call $\theta_4$. We note that $d_3(\rho)$ is coCartesian if and only if $d_2(\theta_4)$ belongs to $C_X$. We define a morphism
  \[
    \func{\Lambda^3_1 \to X}
  \]
by sending the $0$-face to $d_0(\rho)$, the $2$-face to $\sigma_2$ and the $3$-face to $\sigma_3$. We extend this horn to a $3$-simplex that we call $\theta_0$. It follows that every face of $\theta_0$ belongs to $C_X$. Finally, let us define
  \[
    \func{\Lambda^3_2 \to X}
  \]
  by sending the $0$-face to $\sigma_2$, the $1$-face to $d_2(\rho)$ and the $3$-face to $s_0(u)$. We call $\theta_3$ the extension of this horn to a $3$-simplex. We observe that every face of $\theta_3$ belongs to $C_X$.

  Let $\theta_1=\rho$ and observe that the 3-simplices $\theta_i$ for $i\in [4]$, $i \neq 2$ assemble into a $\Lambda^4_2$ and that the face $\Delta^{\set{1,2,3}}$ is thin by construction. We take our final extension $\theta: \Delta^4 \to X$ and observe that $d_2(\theta)$ satisfies the conditions of the lemma and its first edge is degenerate. We finish the proof by noting that $d_3 d_2(\theta)$ is coCartesian if and only if $d_3(\rho)$ is. 
\end{proof}

\begin{proposition}\label{prop:quasi}
  Let $p:X \to S$ be an $\mathbf{O2}$-fibration. Then an edge $e:\Delta^1 \to X$ is  strongly $p$-Cartesian if and only if it is $p$-Cartesian.
\end{proposition}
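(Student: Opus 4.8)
The two directions are of very different character. Since every thin triangle is lean ($T_X\subseteq C_X$), for each fixed $n\geq 3$ the lifting problems appearing in the definition of a $p$-Cartesian edge (those whose face $\restr{f}{\Delta^{\{0,n-1,n\}}}$ is thin) form a subclass of those appearing for a strongly $p$-Cartesian edge (those whose face is merely coCartesian). Hence a strongly $p$-Cartesian edge solves in particular all the $p$-Cartesian lifting problems in every dimension $n\geq 3$, and in the base case $n=2$ a coCartesian filler lying over a thin triangle of $S$ is itself thin by \ref{mb:coCartoverThin}. This gives the implication ``strongly $p$-Cartesian $\implies$ $p$-Cartesian'' with essentially no work. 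The content of the proposition is therefore the reverse implication: I must promote the thin-face lifting property to the coCartesian-face lifting property.

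So assume $e$ is $p$-Cartesian. The case $n=2$ of the strong condition is immediate, since a thin filler is in particular a coCartesian one. Fix $n\geq 3$ and a lifting problem $f:\Lambda^n_n\to X$ over $\alpha:\Delta^n\to S$ with $\restr{f}{\Delta^{\{n-1,n\}}}=e$ and $\sigma:=\restr{f}{\Delta^{\{0,n-1,n\}}}$ only assumed coCartesian; I must produce a filler. The plan is to trade the coCartesian triangle $\sigma$ for a genuinely thin one, at the cost of altering the edge $0\to n$, and then to repair that alteration along the coCartesian $2$-morphism that witnesses the equivalence of the two triangles. Since $X$ is an $\infty$-bicategory, I would first form the left-degeneration of $\sigma$: this yields a thin triangle $\sigma^\flat$ sharing the edges $0\to n-1$ and $e=(n-1\to n)$ with $\sigma$ but with a possibly different edge $0\to n$, together with a left-degenerate triangle $\tau$ exhibiting the two edges $0\to n$ as equivalent in the mapping $\infty$-category. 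As $\sigma$ is coCartesian, $\tau$ is coCartesian, and hence enjoys the $\Lambda^m_0$-lifting property by \autoref{prop:characterizationcoCart}.

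The technical heart is then to build an $(n+1)$-dimensional interpolation between $f$ and the analogous horn $f^\flat$ in which $\sigma$ is replaced by the thin triangle $\sigma^\flat$. I would proceed exactly as in \autoref{lem:n+1face}, inserting the degenerate edge produced by the left-degeneration and filling the resulting auxiliary faces by a filtration whose steps are alternately inner-anodyne, coCartesian lifts supplied by the locally-fibred structure, and applications of the $p$-coCartesian lifting property of $\tau$. Along this interpolation the edge $e$ remains the pivotal $p$-Cartesian edge, so \autoref{lem:fibrants5} together with \autoref{lem:fibrants4} guarantees that every newly created triangle adjacent to $e$ remains lean; this is precisely what renders each filling step legitimate. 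Once the interpolation is in place, the original lifting problem for $f$ is reduced to the lifting problem for $f^\flat$, whose critical face $\sigma^\flat$ is thin, and the latter is solved by the hypothesis that $e$ is $p$-Cartesian. Reading off the appropriate face of the completed $(n+1)$-simplex then yields the desired filler $\hat f$, completing the reverse implication.

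The main obstacle is exactly this interpolation: organizing the auxiliary simplices into a filtration and verifying, face by face, that every triangle created carries the correct decoration (thin, lean, or marked) so that each elementary extension is one of the three permitted moves. This bookkeeping is of the same order of difficulty as \autoref{lem:n+1face} and \autoref{lem:inductionextravaganzza}, and the crucial point that prevents the construction from stalling is the coherence of the coCartesian decorations under passage across the $p$-Cartesian edge $e$, which is furnished by \autoref{lem:fibrants5}.
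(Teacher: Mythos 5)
Your treatment of the easy direction is correct, and your identification of where the content lies (promoting the thin-face lifting property to the coCartesian-face one) matches the paper. However, your reduction for the converse has a structural problem that the paper's proof is specifically designed to avoid. You propose to replace the critical triangle $\sigma$ by a thin triangle $\sigma^\flat$ with a \emph{new} long edge $0\to n$, and then to ``interpolate.'' But the edge $0\to n$ lies in every face $d_i$ of $\Lambda^n_n$ with $0<i<n$, so replacing it is not a local modification of the horn: the entire horn $f^\flat$ has to be rebuilt around the new edge, coherently with the old one, before the thin-face lifting property of $e$ can even be invoked. Carrying this out via an $(n+1)$-simplex with a doubled initial vertex runs into an ordering problem: the $\Lambda^{n+1}_0$-lifting property of $\tau$ (the left-degenerate coCartesian $2$-cell relating the two long edges, which is what \autoref{prop:characterizationcoCart} actually gives you) requires the face missing the duplicated vertex --- i.e.\ the original unknown filler --- as input, so it cannot be used to produce the interpolation wholesale; and the auxiliary tools you cite do not obviously fill the gap (\autoref{lem:n+1face} is tailored to extending along $\Lambda^m_0$-horns with a degenerate \emph{initial} segment, the ``locally fibred'' condition produces coCartesian lifts of triangles of $S$ rather than extensions over a fixed degenerate base, and \autoref{lem:fibrants4}/\autoref{lem:fibrants5} control leanness, whereas what your face-missing-$0$ problem needs is \emph{thinness} of its critical triangle, which holds by construction of $\sigma^\flat$). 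The entire combinatorial content is deferred to ``a filtration of the same order of difficulty as \autoref{lem:n+1face},'' which is precisely the part that needs an argument.

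The paper degenerates at the other end, and this choice dissolves the difficulty. One forms $M_n=\Lambda^n_n\coprod_{\Delta^{\{n\}}}\Delta^1$ mapping to $X$ by $f$ and $e$, includes it into $\Delta^{n+1}$ with the original horn sitting in the face missing $n$ and $e$ on the final edge $\{n,n+1\}$, and works over $s_{n-1}$ of the original base simplex. A filtration (the $P^n_i$) fills the faces missing $0,\dots,n-1$ so that every triangle containing the edge $n\to n+1$ is thin --- this is where the $p$-Cartesianness of $e$ in the \emph{thin}-face sense is used. Then \autoref{lem:fibrants5}, applied to the $3$-simplex $\{0,n-1,n,n+1\}$ whose last edge is $e$ and whose other faces include the coCartesian $\sigma$, shows that the triangle $\{0,n-1,n\}$ is coCartesian, hence thin because its image in $S$ is degenerate. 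The face missing $n+1$ is therefore a $\Lambda^n_n$-problem with thin critical triangle and \emph{degenerate} (hence $p$-Cartesian) last edge, solvable by the $\mathbf{O2}$-fibration hypothesis; one then fills the inner horn $\Lambda^{n+1}_n$, whose critical triangle $\{n-1,n,n+1\}$ is degenerate, and reads off the face missing $n$. In short: the paper never changes the edge $0\to n$ at all, so no face of the original horn has to be rebuilt, and the only decoration that has to be upgraded is that of a single triangle, which \autoref{lem:fibrants5} handles in one step. You should either adopt this configuration or supply the full filtration argument showing your rebuilt horn $f^\flat$ can be constructed without circular dependence on the missing face.
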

\begin{proof}
  The `only if' direction is obvious. Now let us suppose that $e$ is $p$-Cartesian and consider a lifting problem
  \[
     \begin{tikzcd}[ampersand replacement=\&]
    \Lambda^n_n \arrow[r,"f"] \arrow[d] \& X \arrow[d,"p"] \\
    \Delta^{n} \arrow[r,"\sigma"] \arrow[ur,dotted,"\hat{f}"] \& S
     \end{tikzcd}
  \]
  such that $\restr{f}{\Delta^{\set{n-1,n}}}=e$ and $\restr{f}{\Delta^{\set{0,n-1,n}}}$ belongs to $C_X$. We define
  \[
    \func{M_n=\Lambda^n_n \coprod\limits_{\Delta^{\{n\}} }\Delta^1 \to[t] X }
  \]
  where $t$ maps $\Lambda^n_n$ via $f$ and $\Delta^1$ via $e$. We have a canonical inclusion $\iota:M_n \to \Delta^{n+1}$ where $\Lambda^n_n$ lands in the face missing $n$ and $\Delta^1$ gets identified with the final edge. We produce a commutative diagram
   \[
     \begin{tikzcd}[ampersand replacement=\&]
    M^n \arrow[r,"t"] \arrow[d] \& X \arrow[d,"p"] \\
    \Delta^{n+1} \arrow[r,"\ell"] \arrow[ur,dotted,"\hat{t}"]  \& S
     \end{tikzcd}
  \]
  where the bottom horizontal map is given by $\ell=s_{n-1}(\sigma)$. We will show that we can always construct the dotted arrow $\hat{t}$ and thus establishing our claim. We define an auxiliary simplicial set
  \[
    P^n=\Delta^n \coprod\limits_{\Delta^{\{n\}}}\Delta^1 \subset \Delta^{n+1}
  \]
  and inductively define simplicial sets $P^n_i$ obtained from $P^n_{i-1}$ by attaching the face missing $i$ for $0\leq i \leq n-1$ where we are using the convention $P^n_{-1}=P^n$. This yields a filtration 
  \[
    \func{P^n \to P^n_0 \to \cdots \to P^n_{n-1}=\Lambda^{n+1}_{n+1} \to \Delta^{n+1}}
  \]
  We view each $P^n_i$ as a simplicial set over $S$ by means of the composite $P^n_i \to \Delta^{n+1} \to S$ where the first map is the canonical inclusion and the second map is given by $\ell$. Suppose that we are given commutative diagram
   \[
     \begin{tikzcd}[ampersand replacement=\&]
    P^n \arrow[r,"g"] \arrow[d] \& X \arrow[d,"p"] \\
    \Delta^{n+1} \arrow[r,"\ell"] \arrow[ur,dotted]  \& S
     \end{tikzcd}
  \]
  such that $\restr{g}{\Delta^{\set{n,n+1}}}$ is $p$-Cartesian and $\restr{g}{\Delta^{\set{0,n-1,n}}}$ belongs to $C_X$. We claim that we can produce solution to this lifting problem such that such that every triangle containing the edge $n \to n+1$ is thin. The proof of the claim follows by a simple induction argument reminiscent of \autoref{lem:inductionextravaganzza}. The first case we deal with is $n=1$ which follows immediately. Then we suppose that the claim holds for $n-1$, in order to tackle the case $n$ we note that we have pushout diagrams
  \[
     \begin{tikzcd}[ampersand replacement=\&]
    P^{n-1}_{i} \arrow[r] \arrow[d] \& \Delta^n \arrow[d] \\
    P^n_i \arrow[r]   \& P^n_{i+1}{}
     \end{tikzcd}
  \]
  so the extension can be produced stepwise. Essentially the same argument shows that we can extend $t$ to the simplicial set $M^n_{n-1}$ obtained by adding to $M^n$ the faces missing $i$ for $0 \leq i \leq n-1$. Indeed for $n\geq 2$ and $i\leq n-1$ the map $M^n_{i-1} \to M^n_i$ can be expressed as a pushout of the morphism $P^{n-1}_{i-1} \to \Delta^n$. We make some important observations before continuing with the proof
  \begin{itemize}
    \item[a)] We further see that if $n\geq 3$ then $\Delta^{\set{0,n-1,n,n+1}} \to M^n_{n-1} \to X$ is a 3-simplex with all of its faces are coCartesian except possibly the face missing $n+1$. Since the last edge is $p$-Cartesian by construction it follows from \autoref{lem:fibrants5} that $\Delta^{\set{0,n-1,n}}$ is coCartesian in $X$ but since its image under $p$ is thin it follows it follows that it must be thin in $X$.
    \item[b)] We note that by construction the simplex $\Delta^{\set{n-1,n,n+1}}\to M^n_{n-1} \to X $ can be chosen to be degenerate so that the image of $\Delta^{\set{n-1,n}}$ is a degenerate edge and therefore $p$-Cartesian.
  \end{itemize}

  Finally, we note that we can produce a pushout diagram
  \[
    \begin{tikzcd}[ampersand replacement=\&]
      \Lambda^n_n \arrow[r] \arrow[d] \& \Delta^n \arrow[d] \\
      M^n_{n-1} \arrow[r] \& \Lambda^{n+1}_n
    \end{tikzcd}
  \]
and that the morphism $\Lambda^n_n \to M^n_{n-1} \to X$ maps its last edge to a $p$-Cartesian edge and the triangle $\Delta^{\set{0,n-1,n}}$ to a thin triangle in $X$ as seen in $a)$. Therefore we can produce an extension to $\Delta^n$ and consequently we can produce an extension to $\Lambda^{n+1}_n$. The extension from $\Lambda^{n+1}_n$ is guaranteed by $b)$ above.
\end{proof}

\begin{corollary}\label{cor:cart2outof3}
  Let $p:X \to S$ be a $\mathbf{O2}$-fibration and let $\sigma:\Delta^2 \to X$ be a thin 2-simplex as pictured below
  \[
    \begin{tikzcd}
                                  & b \arrow[rd, "g"] &   \\
a \arrow[ru, "f"] \arrow[rr, "h"] &                   & c
\end{tikzcd}
  \]
  Suppose that $g$ is strongly $p$-Cartesian. Then $f$ is strongly $p$-Cartesian if and only if $h$ is strongly $p$-Cartesian. 
\end{corollary}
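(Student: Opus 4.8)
The plan is to reduce everything to the lifting characterization of $p$-Cartesian edges and to use the thin $2$-simplex $\sigma$ as a device for trading the last edge of a horn. By \autoref{prop:quasi} the notions of strongly $p$-Cartesian and $p$-Cartesian coincide, so it suffices to argue with $p$-Cartesian edges, i.e.\ with the lifting property against the inclusions $\Lambda^n_n \hookrightarrow \Delta^n$ ($n\geq 2$) whose last edge $\Delta^{\{n-1,n\}}$ is the edge in question and whose face $\Delta^{\{0,n-1,n\}}$ is thin. Throughout, $g$ is $p$-Cartesian and $\sigma$ exhibits $h$ as $g\circ f$; the two implications are handled by the same vertex-insertion trick run in opposite directions.

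For the implication ``$g,h$ strongly $p$-Cartesian $\Rightarrow$ $f$ strongly $p$-Cartesian'', I would start from a lifting problem $F\colon \Lambda^n_n \to X$ over $\bar F\colon \Delta^n \to S$ with $F|_{\Delta^{\{n-1,n\}}}=f$ and $F|_{\Delta^{\{0,n-1,n\}}}$ thin. I embed this into $\Delta^{n+1}$ by adjoining a vertex after $n$ and placing $\sigma$ on the triangle $\Delta^{\{n-1,n,n+1\}}$, so that $\Delta^{\{n,n+1\}}=g$ and $\Delta^{\{n-1,n+1\}}=h$. The face opposite the vertex $n$ then carries $h$ as its last edge, allowing the Cartesian property of $h$ to be brought to bear there, while the face opposite $n+1$ is exactly the sought lift of the original problem. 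Concretely I would (i) assemble the remaining faces of the outer horn $\Lambda^{n+1}_{n+1}$ from the data $F$, the simplex $\sigma$, and a sequence of inner fillings of type \ref{mb:innerhorn}, producing in particular the face opposite $n$ by an $h$-Cartesian lift; (ii) fill $\Delta^{n+1}$ using that $g$ is $p$-Cartesian (its last edge is $g$); and (iii) restrict the result to the face opposite $n+1$ to solve the original lifting problem.

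The converse implication ``$f,g$ strongly $p$-Cartesian $\Rightarrow$ $h$ strongly $p$-Cartesian'' is the statement that $p$-Cartesian edges compose, and I would prove it by the dual insertion: given a lifting problem whose last edge is $h$, I factor $h$ as $g\circ f$ by inserting a vertex between $n-1$ and $n$ that carries $\sigma$, use the Cartesian property of $g$ for the lift whose last edge is $g$, supply the remaining faces by an $f$-Cartesian step and inner fillings, and finally restrict. The combinatorial pattern of fillings here mirrors that of \autoref{lem:inductionextravaganzza} and of the induction in \autoref{prop:quasi}.

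The main obstacle is not the existence of the fillings but the bookkeeping of the two scalings through the auxiliary simplices: at each stage I must certify that the triangle playing the role of $\Delta^{\{0,n-1,n\}}$ is appropriately scaled (thin, or at least lean) so that the relevant Cartesian lift or scaled-anodyne filling applies. In the key $3$-simplex $\Delta^{\{0,n-1,n,n+1\}}$ both $\Delta^{\{0,n-1,n\}}$ and $\sigma$ are thin, hence lean since $T_X\subseteq C_X$; propagating leanness to the composite triangle $\Delta^{\{0,n-1,n+1\}}$, so that the coCartesian-flavoured lifting for $h$ applies, is precisely an instance of the functorial-family condition for $C_X$, used together with \autoref{lem:fibrants4} and \autoref{lem:fibrants5} (and \autoref{prop:characterizationcoCart} to pass between the two descriptions of coCartesian triangles). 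Once these labels are tracked correctly, the remainder is a routine if lengthy induction, so I would organize the write-up around the scaling bookkeeping and delegate the horn-filling combinatorics to the patterns already established earlier in the section.
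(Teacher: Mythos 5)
Your overall strategy is sound, but it diverges from the paper's argument in an instructive way: the paper's entire proof is ``combine \autoref{prop:quasi} with \cite[Lem.~2.3.8]{GHL_LaxLim} and \cite[Lem.~2.3.9]{GHL_LaxLim}'' --- that is, after using \autoref{prop:quasi} to replace ``strongly $p$-Cartesian'' by ``$p$-Cartesian'' (exactly as you do), it simply cites the composition and left-cancellation lemmas for Cartesian edges already proved by Gagna--Harpaz--Lanari for weak fibrations of scaled simplicial sets. What you propose is essentially a self-contained re-derivation of those two lemmas by the classical vertex-insertion argument (in the spirit of HTT~2.4.1.7), which is legitimate but shifts all the work onto the horn-filling combinatorics and the scaling bookkeeping that the citation disposes of. Your sketch of that bookkeeping is where I would push back slightly: once you have reduced to the ``thin'' version of the lifting property via \autoref{prop:quasi}, the thinness of the auxiliary triangles $\Delta^{\{0,n-1,n+1\}}$ and $\Delta^{\{0,n,n+1\}}$ in the key $3$-simplex follows from closure of thin triangles under composition along thin triangles (generators (i) and (ii) of \autoref{def:scanodyne} applied to the weak $\sS$-fibration $p$), and there is no need to route through leanness, \autoref{lem:fibrants4}, \autoref{lem:fibrants5}, or \autoref{prop:characterizationcoCart}; invoking the coCartesian family here both overcomplicates the argument and risks circularity, since \autoref{lem:fibrants5} is itself proved using Cartesian-edge manipulations of this kind. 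With that correction, and with the deferred induction actually carried out, your route gives a complete proof; the paper's route is shorter but outsources the content to \cite{GHL_LaxLim}.
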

\begin{proof}
  Combine \autoref{prop:quasi} with \cite[Lem. 2.3.8]{GHL_LaxLim} and \cite[Lem. 2.3.9]{GHL_LaxLim}.
\end{proof}

\begin{corollary}\label{cor:ghlmapping}
   Let $p:X \to S$ be a $\mathbf{O2}$-fibration. Then an edge $e:b \to c$ in $X$ is strongly $p$-Cartesian if and only if for every object $a\in X$ post-composition with $e$ induces a homotopy pullback diagram
   \[
     \begin{tikzcd}[ampersand replacement=\&]
       X(a,b) \arrow[r,"e \circ \mathblank"] \arrow[d] \& X(a,c) \arrow[d] \\
       S(p(a),p(b)) \arrow[r,"p(e) \circ \mathblank"] \& S(p(a),p(c))
     \end{tikzcd}
   \]
\end{corollary}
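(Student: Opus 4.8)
The plan is to deduce this from the mapping-$\infty$-category characterization of Cartesian edges for weak fibrations. The first step is to apply \autoref{prop:quasi}: since $p$ is an $\mathbf{O2}$-fibration, an edge is strongly $p$-Cartesian exactly when it is $p$-Cartesian, so I may work with $p$-Cartesian edges throughout. As recorded in the remark following the definition of $\mathbf{O2}$-fibration, $p$ is a weak fibration in the sense of \cite{GHL_LaxLim}, and the $p$-Cartesian edges --- characterized by the lifting property against $\Lambda^n_n$-horns whose $\Delta^{\{0,n-1,n\}}$-face is thin --- are precisely the Cartesian edges of that framework. This identification is what allows the mapping-category technology of \cite{GHL_LaxLim} to be brought to bear.

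For the forward implication, suppose $e\colon b\to c$ is $p$-Cartesian and fix an object $a$. I would argue exactly as in the proof of \autoref{lem:fibrants5}: applying Proposition 2.3.3 of \cite{GHL_LaxLim} to the $p$-Cartesian edge $e$ yields the homotopy pullback square of mapping $\infty$-categories in the statement. Here one uses \autoref{prop:mappingcoCart}, which ensures that $p_{a,b}$ and $p_{a,c}$ are coCartesian fibrations, hence in particular categorical fibrations, so that the strict pullback $X(a,c)\times_{S(p(a),p(c))}S(p(a),p(b))$ already models the homotopy pullback.

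For the converse, assume the square is a homotopy pullback for every $a$. Because $p_{a,c}$ is a categorical fibration, this is equivalent to asserting that the comparison functor
\[
\Phi_a\colon X(a,b)\longrightarrow X(a,c)\times_{S(p(a),p(c))}S(p(a),p(b))
\]
is an equivalence of $\infty$-categories for every $a$. Both source and target are coCartesian fibrations over $S(p(a),p(b))$ and $\Phi_a$ lies over the identity, so a $\Lambda^n_n$-horn lifting problem for $e$ with thin $\Delta^{\{0,n-1,n\}}$-face --- based at the vertex $a=f(0)$ --- translates into a lifting problem controlled by $\Phi_a$, which is solvable precisely because $\Phi_a$ is an equivalence. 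This is the reverse implication of the characterization of \cite{GHL_LaxLim}, and I would invoke it in that form.

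I expect the real obstacle to lie in this converse, and more precisely in two points of bookkeeping: verifying that our lifting-theoretic notion of $p$-Cartesian edge matches the Cartesian edges of \cite{GHL_LaxLim} faithfully enough to cite their characterization, and carrying out the translation between horn-filling problems over $S$ and the mapping-$\infty$-category lifting problems governed by $\Phi_a$. Once this dictionary is established, both directions become formal consequences of \autoref{prop:quasi}, \autoref{prop:mappingcoCart}, and the results of \cite{GHL_LaxLim}.
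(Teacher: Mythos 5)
Your proposal is correct and takes essentially the same route as the paper, whose entire proof is the one-line instruction to combine \autoref{prop:quasi} with \cite[Prop.~2.3.3]{GHL_LaxLim}; your extra discussion of the comparison functor and the horn-filling dictionary is simply an unpacking of what that citation already provides. No changes needed.
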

\begin{proof}
  Combine \autoref{prop:quasi} with \cite[Prop. 2.3.3]{GHL_LaxLim}.
\end{proof}

\begin{proposition}\label{prop:fibreo2fib}
  Let $p:X \to S$ be an $\mathbf{O2}$-fibration. Given a pair of objects $a,b \in X$ and an (strongly) $p$-Cartesian edge $e: a' \to b$ such that $p(a)=p(a')$ we have a pullback diagram in $\iCat_{\infty}$
  \[
    \begin{tikzcd}[ampersand replacement=\&]
      X_{p(a)}(a,a') \arrow[r,"e \circ \mathblank"] \arrow[d] \& X(a,b) \arrow[d] \\
      \Delta^0 \arrow[r,"p(e)"] \& S(p(a),p(b))
    \end{tikzcd}
  \]
\end{proposition}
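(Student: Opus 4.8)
The plan is to exhibit the square as the horizontal pasting of two homotopy pullback squares and then invoke the pasting law. The first ingredient is the identification of the left-hand object $X_{p(a)}(a,a')$ with the fibre of the mapping-space functor $p_{a,a'}\colon X(a,a')\to S(p(a),p(a'))$ over the distinguished vertex $\mathrm{id}_{p(a)}$. Unwinding \autoref{defn:mappingspace}, an $n$-simplex of $X(a,a')$ is a map $Q_n\to X$ sending the two distinguished vertices to $a$ and $a'$, and such a simplex lies over the totally degenerate simplex at $\mathrm{id}_{p(a)}$ exactly when its composite with $p$ is constant at $p(a)=p(a')$, i.e. when it factors through the fibre $X_{p(a)}$. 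Thus, level-wise, the strict fibre of $p_{a,a'}$ over $\mathrm{id}_{p(a)}$ is precisely $X_{p(a)}(a,a')$.

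Next I would upgrade this strict fibre to a homotopy pullback. By \autoref{prop:mappingcoCart} the map $p_{a,a'}$ is a coCartesian fibration of $\infty$-categories, hence in particular an isofibration; for such a map the strict fibre over a vertex computes the homotopy fibre, so the square
\[
\begin{tikzcd}[ampersand replacement=\&]
X_{p(a)}(a,a') \arrow[r] \arrow[d] \& X(a,a') \arrow[d,"p_{a,a'}"] \\
\Delta^0 \arrow[r,"\mathrm{id}_{p(a)}"] \& S(p(a),p(a'))
\end{tikzcd}
\]
is a homotopy pullback in $\iCat_{\infty}$. On the other hand, since $e$ is $p$-Cartesian and hence strongly $p$-Cartesian by \autoref{prop:quasi}, \autoref{cor:ghlmapping} applied to the object $a$ (with its $b,c$ taken to be our $a',b$) yields a second homotopy pullback square whose left vertical map is again $p_{a,a'}$, whose top map is $e\circ\mathblank$, and whose bottom map is $p(e)\circ\mathblank$.

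Finally I would paste these two squares horizontally along their shared edge $p_{a,a'}$ and invoke the pasting law for homotopy pullbacks to conclude that the outer rectangle is itself a homotopy pullback. Its top edge is the restriction of $e\circ\mathblank$ to the fibre $X_{p(a)}(a,a')$, and its bottom edge is $p(e)\circ \mathrm{id}_{p(a)}=p(e)$, so the outer rectangle is exactly the square in the statement. I expect the only genuinely delicate point to be the fibre identification of the first step — in particular checking that the relevant vertex of $S(p(a),p(a'))$ is the identity $\mathrm{id}_{p(a)}$ and that passing to the strict fibre of the isofibration $p_{a,a'}$ indeed models the homotopy fibre; once these are in hand, the remainder is a formal diagram chase through the pasting law.
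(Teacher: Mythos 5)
Your argument is correct, and it uses exactly the same two inputs as the paper's proof --- \autoref{prop:mappingcoCart} (so that a strict fibre/pullback of the coCartesian fibration $p_{a,b}$ is already a homotopy one) and \autoref{cor:ghlmapping} via \autoref{prop:quasi} --- but it organizes them differently. The paper first forms the strict pullback $\Delta^0\times_{S(p(a),p(b))}X(a,b)$, identifies it with $X_{p(e)}(a,b)$ where $X_{p(e)}=X\times_S\Delta^1$, notes that this square is a pullback in $\iCat_\infty$ because the right-hand vertical map is a coCartesian fibration, and then reduces the statement to showing that the comparison map $X_{p(a)}(a,a')\to X_{p(e)}(a,b)$ given by composition with $e$ is an equivalence, which is extracted from \autoref{cor:ghlmapping}. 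You instead decompose the square horizontally, as the pasting of the (strict $=$ homotopy) fibre square of $p_{a,a'}$ over $\on{id}_{p(a)}$ with the homotopy pullback square of \autoref{cor:ghlmapping}, and conclude by the pasting law. The two routes are interchangeable: yours avoids introducing the auxiliary fibration $X_{p(e)}\to\Delta^1$ and makes the role of \autoref{cor:ghlmapping} more transparent, while the paper's isolates the levelwise identification of mapping categories in a pullback (the analogue of the fibre identification you rightly flag as the one point requiring care, and which you verify correctly using the explicit description of $Q_n$).
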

\begin{proof}
  Let $X_{p(e)} \to \Delta^1$ denote the pullback of $X \to S$ along the map selecting the edge $p(e)$. It is easy to verify that we have a pullback diagram of simplicial sets
  \[
    \begin{tikzcd}[ampersand replacement=\&]
      X_{p(e)}(a,b) \arrow[r] \arrow[d] \& X(a,b) \arrow[d] \\
      \Delta^0 \arrow[r,"p(e)"] \& S(p(a),p(b))
    \end{tikzcd}
  \]
It follows from \autoref{prop:mappingcoCart} that the right-most vertical map is a coCartesian fibration. This in turn implies that this diagram is a pullback diagram in $\iCat_{\infty}$. Therefore to show our claim we need to show that the induced morphism
\[
  \func{X_{p(a)}(a,a') \to X_{p(e)}(a,b)}
\]
is an equivalence of $\infty$-categories. It is immediate to check that $X_{p(a)}(a,a')=X_{p(e)}(a,a')$. The claim now follows from \autoref{cor:ghlmapping}.
\end{proof}

\begin{definition}\label{def:fibrant2Cart}
  Let $p:X \to S$ be an $\mathbf{O2}$-fibration. We say that $p$ is an $\mathbf{O2C}$-fibration (outer 2-Cartesian fibration) if for every edge $e:s \to p(x)$ in $S$ there exists a $p$-Cartesian lift $\hat{e}:\Delta^1 \to X$ such that $p(\hat{e})=e$.
\end{definition}

\begin{corollary}
  Let $p: X \to S$ be an $\mathbf{O2C}$-fibration and let $p^c:X^{c} \to S$ denote the restriction to $p$ to the simplicial subset $X^{c}$ consisting only in simplices whose triangles are in $C_X$. Then $p^{c}$ is an \emph{outer Cartesian fibration} in the sense of \cite{GHL_LaxLim}. In particular, $p$ is an outer Cartesian fibration if and only if all of its triangles belong to $C_X$.
\end{corollary}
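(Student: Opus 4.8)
The plan is to verify directly that $p^c$ satisfies the conditions which characterize an outer Cartesian fibration in \cite{GHL_LaxLim}: that it is a \emph{weak fibration} (the induced functors on mapping $\infty$-categories are coCartesian fibrations), that every $2$-morphism of the total space is coCartesian, that coCartesian $2$-cells are stable under horizontal composition, and that every $1$-morphism of $S$ admits a Cartesian lift detected by the homotopy-pullback criterion on mapping spaces. The first routine point is that $X^c$ is genuinely a simplicial subset of $X$: it is closed under faces, since every triangle of a face of $\tau$ is a triangle of $\tau$, and closed under degeneracies, since a degenerate triangle is thin and every thin triangle lies in $C_X$ (its left-degeneration represents an equivalence, hence a coCartesian edge of the mapping $\infty$-category, so it is coCartesian by \autoref{prop:characterizationcoCart}). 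Thus $p^c\colon X^c \to S$ is well defined.

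Next I would identify the mapping objects. For $a,b \in X^c$ the object $X^c(a,b)$ consists of those simplices of $X(a,b)$ all of whose edges are coCartesian, so by \autoref{prop:characterizationcoCart} it is precisely the subcategory of $X(a,b)$ spanned by the $p_{a,b}$-coCartesian morphisms. By \autoref{prop:mappingcoCart} the functor $p_{a,b}\colon X(a,b) \to S(p(a),p(b))$ is a coCartesian fibration, and since coCartesian morphisms are closed under composition the restriction $p^c_{a,b}$ is again a coCartesian fibration, now with \emph{every} morphism coCartesian; this is exactly the weak-fibration-plus-all-$2$-cells-coCartesian requirement. Closure of $X^c$ under the remaining bicategorical fillers, and the stability of coCartesian triangles under horizontal composition, is the content of functorial fibredness together with \autoref{lem:fibrants4} and \autoref{lem:fibrants5}; this simultaneously shows that $X^c$ is an $\infty$-bicategory and supplies the functoriality demanded of an outer Cartesian fibration.

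For the Cartesian lifts I would invoke the $\mathbf{O2C}$ hypothesis of \autoref{def:fibrant2Cart}: every edge $e\colon s \to p(x)$ of $S$ has a $p$-Cartesian lift $\hat e$, which by \autoref{prop:quasi} is strongly $p$-Cartesian and hence, by \autoref{cor:ghlmapping} and \autoref{prop:fibreo2fib}, realizes the post-composition homotopy-pullback squares of mapping $\infty$-categories that \cite{GHL_LaxLim} take as the defining property of a Cartesian $1$-morphism. One then checks that $\hat e$, together with the thin (hence coCartesian) triangles witnessing its universal property, lies in $X^c$, so that $\hat e$ is a Cartesian lift for $p^c$ as well. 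Assembling these observations completes the verification that $p^c$ is an outer Cartesian fibration.

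Finally, the ``in particular'' follows formally: $X^c = X$ exactly when every triangle of $X$ belongs to $C_X$, in which case $p^c = p$ and $p$ is an outer Cartesian fibration; conversely, an outer Cartesian fibration in the sense of \cite{GHL_LaxLim} has every $2$-morphism coCartesian, and since for an $\mathbf{O2C}$-fibration the collection $C_X$ records precisely the coCartesian triangles (by \autoref{prop:characterizationcoCart} and functorial fibredness), this forces $X = X^c$. I expect the main obstacle to be one of translation rather than of mathematics: faithfully matching our $C_X$-decorated, marked-biscaled lifting conditions with the undecorated definitions of \cite{GHL_LaxLim}, and in particular confirming cleanly that $X^c$ is closed under the bicategorical horn-fillers and that coCartesian and Cartesian cells are transported correctly along the inclusion $X^c \hookrightarrow X$.
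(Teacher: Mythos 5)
The paper states this corollary without proof, treating it as an immediate consequence of the preceding results, and your verification is correct: it unpacks exactly the intended deductions, using \autoref{prop:characterizationcoCart} to identify $X^{c}(a,b)$ with the wide subcategory of coCartesian edges of $X(a,b)$, functorial fibredness together with \autoref{lem:fibrants4} and \autoref{lem:fibrants5} for closure of $X^{c}$ under the relevant horn fillers, and \autoref{prop:quasi}, \autoref{cor:ghlmapping}, and \autoref{prop:fibreo2fib} for the Cartesian lifts. The one step you leave as ``one then checks'' --- that the post-composition squares for the mapping categories of $X^{c}$ remain homotopy pullbacks --- is indeed routine, since restricting a coCartesian fibration to its coCartesian edges yields a left fibration and this operation is compatible with the homotopy pullbacks supplied by \autoref{cor:ghlmapping}.
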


\begin{theorem}\label{thm:characterization2Fib}
  Let $p:X \to S$ be a locally fibred map equipped with a coCartesian family of triangles $C_X$. Let $E_X$ denote the collection of p-cartesian edges. Then $p$ is a $\mathbf{O2C}$-fibration if and only if the map $(X,E_X,T_X \subseteq C_X) \to (S,\sharp,T_S \subset \sharp)$ is a 2-Cartesian fibration in the sense of \autoref{def:fibrantobjects}.
\end{theorem}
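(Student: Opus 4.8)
The plan is to establish both implications at once by matching the right lifting property against each generating \bS-anodyne morphism of \autoref{def:mbsanodyne} with a single defining clause of an \(\mathbf{O2C}\)-fibration or with one of the lemmas proved above. Since \(p\) is locally fibred and its coCartesian triangles form a functorial family, \(p\) is functorially fibred, and several generators are controlled by these standing hypotheses alone, independently of the direction: lifting against \ref{mb:innerhorn}--\ref{mb:wonky4} is the weak \sS-fibration property; lifting against \ref{mb:leftdeglefthorn} is the defining lifting property of a \(p\)-coCartesian left-degenerate triangle, which every lean triangle satisfies because \(C_X\) is by hypothesis the class of coCartesian triangles (\autoref{prop:characterizationcoCart}); lifting against \ref{mb:innersaturation} is verbatim the functoriality of \(C_X\); and lifting against \ref{mb:dualcocart2of3} is \autoref{lem:fibrants4}. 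Lifting against \ref{mb:coCartoverThin} asserts that a lean triangle lying over a thin triangle of \(S\) is itself thin; via \autoref{prop:mappingcoCart} this reduces to the statement that a coCartesian edge of a coCartesian fibration whose image is an equivalence is an equivalence.

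For the forward implication I assume \(p\) is an \(\mathbf{O2C}\)-fibration; then \(p\) is in particular an \(\mathbf{O2}\)-fibration, which makes the remaining lemmas applicable. Lifting against \ref{mb:2CartliftsExist} is exactly the existence of \(p\)-Cartesian lifts granted by \autoref{def:fibrant2Cart}; lifting against \ref{mb:coCart2of3} is \autoref{lem:fibrants5}; and lifting against \ref{mb:equivalences} holds because an equivalence \(e:b\to c\) induces equivalences \(X(a,b)\to X(a,c)\) and \(S(p(a),p(b))\to S(p(a),p(c))\), so the square of \autoref{cor:ghlmapping} is a homotopy pullback and \(e\) is strongly \(p\)-Cartesian, hence marked. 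The delicate cases are \ref{mb:2Cartesianmorphs} and \ref{mb:composeacrossthin}: the marked edges are by definition only the \(p\)-Cartesian edges, whereas \ref{mb:2Cartesianmorphs} demands the lifting property of a \emph{strongly} \(p\)-Cartesian edge (its distinguished face \(\Delta^{\set{0,n-1,n}}\) is required to be lean, not thin). Closing this gap is precisely the content of \autoref{prop:quasi}, after which \ref{mb:composeacrossthin} follows from \autoref{cor:cart2outof3}.

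For the converse I assume the displayed map is a \bS-fibration. Since \(p\) is already functorially fibred, it suffices to see that \(p\) is \(\mathbf{O2}\) and that \(p\)-Cartesian lifts exist. The first is automatic: \((X,E_X,T_X\subseteq C_X)\) is a marked biscaled simplicial set, so \(E_X\) --- the collection of \(p\)-Cartesian edges --- contains every degenerate edge, whence each degenerate edge is \(p\)-Cartesian (this is also forced by lifting against \ref{mb:equivalences}, as degenerate edges are equivalences). The second is immediate from lifting against \ref{mb:2CartliftsExist}: for any \(e:s\to p(x)\) the induced lifting problem produces a marked, hence \(p\)-Cartesian, lift \(\hat e\) of \(e\) ending at \(x\). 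Thus \(p\) satisfies every clause of \autoref{def:fibrant2Cart}.

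I expect the principal obstacle to lie not in any single geometric computation but in the identification, in cases \ref{mb:2Cartesianmorphs} and \ref{mb:composeacrossthin}, of the marking on \((X,E_X,T_X\subseteq C_X)\) with the \emph{strong} Cartesian lifting property; this is exactly why \autoref{prop:quasi} is the keystone of the forward implication. Every remaining case is either a standing hypothesis or a direct citation of \autoref{lem:fibrants4}, \autoref{lem:fibrants5}, \autoref{cor:cart2outof3}, or \autoref{cor:ghlmapping}.
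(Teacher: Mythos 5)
Your proposal is correct and follows essentially the same route as the paper: a generator-by-generator matching of the \bS-anodyne maps against the clauses of \autoref{def:fibrant2Cart}, with \ref{mb:composeacrossthin} handled by \autoref{cor:cart2outof3}, \ref{mb:coCartoverThin} by \autoref{prop:mappingcoCart}, \ref{mb:dualcocart2of3} by \autoref{lem:fibrants4}, \ref{mb:coCart2of3} by \autoref{lem:fibrants5}, and \ref{mb:equivalences} by \autoref{cor:ghlmapping}, the remaining generators being absorbed by the definitions. You are in fact slightly more careful than the printed proof in flagging that \ref{mb:2Cartesianmorphs} is not literally ``hardcoded'' but requires \autoref{prop:quasi} to pass from $p$-Cartesian to strongly $p$-Cartesian, and in spelling out the converse that the paper declares clear.
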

\begin{proof}
  Let us suppose that $p$ is a $\mathbf{O2C}$-fibration. We need to show that $p$ has the right lifting property with respects to the maps of \autoref{def:mbsanodyne}. The only cases that are not hardcoded into the definitions are: \ref{mb:composeacrossthin},
  \ref{mb:coCartoverThin},
  \ref{mb:dualcocart2of3},
  \ref{mb:coCart2of3}, and
  \ref{mb:equivalences}. \ref{mb:composeacrossthin} follows from \autoref{cor:cart2outof3}, \ref{mb:coCartoverThin} follows from \autoref{prop:mappingcoCart}, \ref{mb:dualcocart2of3} follows from \autoref{lem:fibrants4}, \ref{mb:coCart2of3} follows from \autoref{lem:fibrants5} and finally \ref{mb:equivalences} follows from \autoref{cor:ghlmapping}. The converse is clear.
\end{proof}

\begin{proposition}
   Suppose we are given a morphism of 2-Cartesian fibrations 
  \[
    \begin{tikzcd}
     X \arrow[rr,"f"] \arrow[rd,"p"] &   & Y \arrow[ld,swap,"q"] \\
                        & S &             
    \end{tikzcd}
  \]
  Then the following statements are equivalent:
  \begin{itemize}
    \item[i)] For every $s\in S$ the map $f_s:X_s \to Y_s$ is a bicategorical equivalence.
    \item[ii)] The map $f$ is a bicategorical equivalence.
  \end{itemize}
\end{proposition}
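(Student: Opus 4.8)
The plan is to characterize the bicategorical equivalence $f$ through its action on objects and on mapping $\infty$-categories, and then to match each of these against the fibrewise data by exploiting the coCartesian fibration structure on mapping spaces. Throughout I use \autoref{thm:characterization2Fib} to regard $p$ and $q$ as $\mathbf{O2C}$-fibrations; since $f$ is a morphism in $(\mbsSet)_{/S}$ it preserves marked edges and lean triangles, so it sends $p$-Cartesian edges to $q$-Cartesian edges and coCartesian triangles to coCartesian triangles. I also use the standard fact that a functor of $\infty$-bicategories is a bicategorical equivalence if and only if it is essentially surjective on objects and induces an equivalence $X(a,b)\to Y(f(a),f(b))$ of mapping $\infty$-categories for every pair $a,b$.

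The central observation is this. For any $a,b\in X$ we have $q\circ f=p$, so the functor induced by $f$ on mapping spaces fits into a commuting triangle over $S(p(a),p(b))$ together with the projections $p_{a,b}$ and $q_{f(a),f(b)}$, which are coCartesian fibrations by \autoref{prop:mappingcoCart}. Because $f$ preserves coCartesian triangles, this functor preserves coCartesian edges; hence, by the standard fact that a coCartesian-edge-preserving map of coCartesian fibrations over a common base is an equivalence if and only if it is a fibrewise equivalence, the map $X(a,b)\to Y(f(a),f(b))$ is an equivalence exactly when it is one on each fibre. Finally, \autoref{prop:fibreo2fib} identifies the fibre over a $1$-morphism $\psi\colon p(a)\to p(b)$ with $X_{p(a)}(a,a'_\psi)$, where $a'_\psi$ is the source of a $p$-Cartesian lift of $\psi$ terminating at $b$; taking $\psi$ degenerate recovers $X_s(a,a')$ as the fibre over $\mathrm{id}_s$.

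For $(i)\Rightarrow(ii)$, essential surjectivity of $f$ is immediate: given $y\in Y$ with $q(y)=s$, essential surjectivity of $f_s$ yields $x\in X_s$ with $f(x)\simeq y$ already in $Y_s$, hence in $Y$. For the local condition, fix $a,b\in X$; since $f$ preserves $p$-Cartesian edges, $f(a'_\psi)$ is the source of a $q$-Cartesian lift of $\psi$ ending at $f(b)$, so the induced map on fibres over $\psi$ is exactly the map $X_{p(a)}(a,a'_\psi)\to Y_{p(a)}(f(a),f(a'_\psi))$ induced by $f_{p(a)}$ on mapping categories. As $f_{p(a)}$ is a bicategorical equivalence by hypothesis this is an equivalence of $\infty$-categories, and the central observation upgrades it to an equivalence $X(a,b)\to Y(f(a),f(b))$. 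Thus $f$ is a bicategorical equivalence.

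For $(ii)\Rightarrow(i)$, fix $s$. The local condition for $f_s$ follows by restricting the fibrewise equivalence of the central observation (now known for $f$) to the fibre over $\mathrm{id}_s$, which is precisely $X_s(a,a')\to Y_s(f(a),f(a'))$. For essential surjectivity of $f_s$, take $y\in Y_s$; essential surjectivity of $f$ gives $x\in X$ and an equivalence $f(x)\simeq y$ lying over an equivalence $\theta\colon p(x)\to s$. Transporting $x$ along a $p$-Cartesian lift of $\theta^{-1}$ produces $x''\in X_s$ equivalent to $x$, and composing yields an equivalence $f(x'')\to y$ over $\theta\theta^{-1}$; correcting by the lift along $q$ of an invertible $2$-cell $\theta\theta^{-1}\Rightarrow\mathrm{id}_s$ gives an equivalence over $\mathrm{id}_s$, which, as the fibre inclusion $Y_s\hookrightarrow Y$ reflects equivalences, exhibits $y$ in the essential image of $f_s$. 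I expect this last fibrewise essential-surjectivity argument to be the main obstacle: unlike its easy counterpart in $(i)\Rightarrow(ii)$, it requires moving an equivalence of $Y$ lying over a nontrivial autoequivalence of $s$ into the fibre $Y_s$, and this is exactly where the full $\mathbf{O2C}$-structure—Cartesian lifts of equivalences together with liftability of invertible $2$-cells—must be used.
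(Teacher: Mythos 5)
Your proof is correct, and your treatment of $(ii)\Rightarrow(i)$ is essentially the paper's: both identify $X_s(u,v)\to Y_s(f(u),f(v))$ with the fibre over $\mathrm{id}_s$ of the map of coCartesian fibrations $X(u,v)\to Y(f(u),f(v))$ over $S(s,s)$ via \autoref{prop:fibreo2fib} and \cite[Prop. 3.3.1.5]{HTT}, and both obtain essential surjectivity of $f_s$ by pulling a preimage of $y\in Y_s$ into the fibre along a $p$-Cartesian lift of an inverse of the underlying equivalence in $S$ and then correcting the composite so that the resulting equivalence lies in $Y_s$. Where you genuinely diverge is $(i)\Rightarrow(ii)$. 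The paper invokes \autoref{prop:fibresdetectequivalences}, which produces a weak equivalence in the model structure of \autoref{thm:modelstructure} by factoring $f$ through a \bS-anodyne map and applying the fibrewise criterion for trivial fibrations (\autoref{prop:trivfibfibres}); one then has to add that a weak equivalence between fibrant objects over $S$ is a marked homotopy equivalence, hence a bicategorical equivalence of total spaces. You instead run the mapping-space machinery forwards: essential surjectivity is immediate from that of the $f_s$, and the map $X(a,b)\to Y(f(a),f(b))$ is a coCartesian-edge-preserving map of coCartesian fibrations over $S(p(a),p(b))$ whose fibre over each $\psi$ is identified, using \autoref{prop:fibreo2fib} and the fact that $f$ preserves Cartesian edges, with the map $X_{p(a)}(a,a'_\psi)\to Y_{p(a)}(f(a),f(a'_\psi))$ induced by the bicategorical equivalence $f_{p(a)}$. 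Your route stays entirely within the fibrant-base analysis of this section and avoids the model-categorical detour, at the mild cost of leaning explicitly on the characterization of bicategorical equivalences as essentially surjective and fully faithful on mapping $\infty$-categories (a fact the paper also uses tacitly in the converse direction) and of checking that the fibre identifications of \autoref{prop:fibreo2fib} commute with $f$ up to homotopy, which holds precisely because $f$ sends the chosen Cartesian lift to a Cartesian lift. Both arguments are sound.
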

\begin{proof}
  The implication $i) \implies ii)$ is a direct consequence of \autoref{prop:fibresdetectequivalences}. To prove the converse let us $u,v \in X$ such that $p(u)=p(v)=s$. Then it follows from \autoref{prop:fibreo2fib} that we can identify the morphism
  \[
    \func{X_s(u,v) \to Y_s(f(u),f(v))}
  \]
  with the the fibre over $\on{id}_s$ of the map
  \[
    \begin{tikzcd}
     X(u,v) \arrow[rr,"f_{uv}"] \arrow[rd] &   & Y(f(u),f(v)) \arrow[ld] \\
                        & S(s,s) &             
    \end{tikzcd}
  \]
  Since $f$ is a bicategorical equivalence it follows that $f_{uv}$ is a categorical equivalence and we can use \cite[Prop. 3.3.1.5]{HTT} to show that the map $f_s$ is fully faithful. To finish the proof we will show that $f_s$ is essentially surjective. Let $y \in Y_s$ and pick $x \in X$ together with an equivalence $\alpha:f(x) \to y$. Let us pick an inverse to $p(\alpha)$ namely $\gamma:s \to p(x)$ and a lift of $\gamma$ which we call $\beta: \hat{x} \to x$. It is easy to see that $\beta$ must be an equivalence. To finish the proof we can assemble $f(\beta)$ and $\alpha$ into a $\Lambda^2_1$ and construct a extension to $\sigma:\Delta^2 \to X$ such that the edge $\Delta^{\set{0,2}}$ belongs to $Y_s$. 
\end{proof}

\subsection{Fibrations of simplicially enriched categories}
\begin{definition}
  We say that a $\on{Set}_{\Delta}^{+}$-enriched category $\C$ is a $\iCat_{\infty}\!$-category if it is a fibrant object in the model structure of $\on{Set}_{\Delta}^{+}$-enriched categories.
\end{definition}

\begin{proposition}\label{prop:enrichefunctoriallyfibred}
  Let $f:\C \to \D$ be a fibration of $\iCat_{\infty}\!$-categories. Then the map
  \[
    \func{\Nsc(f):\Nsc(\C) \to \Nsc(\D)}
  \]
  is a functorially fibred morphism if and only if the following hold:
  \begin{itemize}
    \item[i)] For every $x,y \in \C$ the map $\C(x,y) \to \D(f(x),f(y))$ is a coCartesian fibration of $\infty$-categories.
    \item[ii)] The composition law in $\C$ preserves coCartesian edges.
  \end{itemize}
\end{proposition}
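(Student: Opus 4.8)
The plan is to reduce the combinatorial notion of a functorially fibred morphism to statements about the mapping $\infty$-categories, where conditions (i) and (ii) live. The crucial dictionary is \autoref{prop:characterizationcoCart}: for $X = \Nsc(\C)$, a left-degenerate triangle with endpoints $a,b$ is coCartesian exactly when it represents a coCartesian edge of the mapping $\infty$-category $\Nsc(\C)(a,b)$ of \autoref{defn:mappingspace}. Two standard facts about the scaled nerve underpin the whole argument. First, since $f$ is a fibration of $\iCat_\infty$-categories, $\Nsc(f)$ is a fibration in the scaled model structure of \cite{LurieGoodwillie}, and in particular has the right lifting property against scaled anodyne maps; thus $\Nsc(f)$ is a weak \sS-fibration, settling the first clause in the definition of ``locally fibred''. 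Second, for fibrant $\C$ the mapping $\infty$-category $\Nsc(\C)(a,b)$ is naturally equivalent to the underlying $\infty$-category of the marked mapping object $\C(a,b)$ \cite{GHL_equiv}, and under this equivalence the map $\Nsc(f)_{a,b}$ is identified with $\C(a,b) \to \D(f(a),f(b))$. Since the property of being a coCartesian fibration is invariant under equivalences of the maps involved, condition (i) is precisely the statement that every $\Nsc(f)_{a,b}$ is a coCartesian fibration.

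Assume first (i) and (ii); I must show $\Nsc(f)$ is functorially fibred. Local fibredness is immediate from the dictionary: given a left-degenerate $\widetilde\sigma \colon \Delta^2 \to \Nsc(\D)$ and a lift $\tau$ of its $\{0,2\}$-edge, I read $\widetilde\sigma$ as an edge of $\Nsc(\D)(f(a),f(b))$ equipped with a chosen lift $\tau$ of its source object, and (i) furnishes a coCartesian lift; by \autoref{prop:characterizationcoCart} this is the required coCartesian left-degenerate triangle $\sigma$ over $\widetilde\sigma$. It remains to check that $C_X$ is a functorial family. Given a $3$-simplex $\rho$ whose face $\Delta^{\{i-1,i,i+1\}}$ is thin and all of whose faces except possibly the $i$-th are coCartesian, I pass to the appropriate mapping category and reinterpret the faces of $\rho$ as coCartesian edges (or, via the thin face, composites thereof). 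For one value of $i$ the conclusion follows from closure of coCartesian edges under composition and right cancellation inside a single mapping $\infty$-category, a formal consequence of (i); for the other it follows from stability of coCartesian edges under the composition law, which is exactly (ii). In both cases \autoref{lem:n+1face} is used to replace the relevant triangles by their left-degenerations so that \autoref{prop:characterizationcoCart} applies verbatim.

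For the converse, assume $\Nsc(f)$ is functorially fibred. Then it is locally fibred, so \autoref{prop:mappingcoCart} shows each $\Nsc(f)_{a,b}$ is a coCartesian fibration; transporting along the equivalence $\Nsc(\C)(a,b) \simeq \C(a,b)$ yields (i). For (ii), I start from coCartesian $2$-morphisms in $\C(a,b)$ and $\C(b,c)$ and assemble a $3$-simplex $\rho$ of $\Nsc(\C)$ exhibiting their horizontal composite, arranged so that its distinguished face is thin and all of its faces but the composite one are already known to be coCartesian. The functorial-family hypothesis then forces the composite face into $C_X$, which by \autoref{prop:characterizationcoCart} says precisely that the horizontal composite is a coCartesian edge; this is the assertion that the composition law preserves coCartesian edges.

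The main obstacle is the bookkeeping in assembling the witnessing $3$-simplices and verifying that their faces, once transported to the mapping $\infty$-categories, genuinely exhibit the intended composites of coCartesian edges --- and that the thinness of the distinguished face encodes the invertibility needed for the composition-law interpretation to be valid. Here \autoref{prop:quasi} and \autoref{lem:fibrants5} control the interaction of coCartesian edges with composable chains, while \autoref{lem:n+1face} performs the reduction of arbitrary triangles to left-degenerate ones; together they reduce both directions to the two formal facts about coCartesian edges, namely composition and right-cancellation within a single mapping category, and stability under horizontal composition, recorded above.
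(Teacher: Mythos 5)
Your overall architecture matches the paper's: translate both ``locally fibred'' and ``functorial family'' into statements about coCartesian edges of mapping $\infty$-categories via \autoref{prop:characterizationcoCart}, match the first with condition (i) and the second with condition (ii), and reduce the functoriality step to formal composition/cancellation properties of coCartesian edges together with stability under whiskering. The paper organizes the (ii) $\Leftrightarrow$ functoriality step around the square $\OO^3(0,3) \to \C(\ell_i(0),\ell_i(3))$, but the content is the same as yours. Note, however, that \emph{both} values of $i$ require (ii): in each case one of the three known-coCartesian faces of $\rho$ enters the square only as a whiskering by a $1$-morphism, so its coCartesianness as an edge of $\C(\ell_i(0),\ell_i(3))$ is not a formal consequence of (i). Your dichotomy ``one case from (i) alone, the other from (ii)'' is therefore not quite right, though this does not damage the argument.

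The genuine soft spot is your identification of condition (i) with ``$\Nsc(f)_{a,b}$ is a coCartesian fibration'' by appeal to invariance of coCartesian fibrations under equivalence of maps. That property is not equivalence-invariant in general (every functor is equivalent to an isofibration), and even after checking that both maps are categorical fibrations and that the comparison of \cite{GHL_equiv} is suitably compatible with $f$, the direction needed for local fibredness is constructive: given a coCartesian edge of $\C(a,b)$ over prescribed data in $\D$, you must produce an honest left-degenerate triangle of $\Nsc(\C)$ with prescribed image in $\Nsc(\D)$ and prescribed restriction to $\Delta^{\{0,2\}}$, and then verify it solves the relevant $\Lambda^n_0$-lifting problems. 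This is exactly where the paper does (or rather cites) real work: it adapts the combinatorial proof of \cite[Prop. 3.1.3]{GHL_LaxLim}, the key point being that $\sqcap^{n-1,1}_0 \to \square^{n}$ is anodyne for the coCartesian model structure, so that horn-lifting for $\Nsc(f)$ translates into lifting problems for the marked maps $\C(x,y) \to \D(f(x),f(y))$. Without this (or an equivalent comparison) your first paragraph asserts rather than proves the equivalence of (i) with local fibredness. A smaller point: \autoref{lem:fibrants5} and \autoref{prop:quasi} are stated for $\mathbf{O2}$-fibrations, a hypothesis not available at this stage, so they should not be invoked here; the paper gets by with \autoref{prop:characterizationcoCart}, the $\OO^3$ squares, and standard closure properties of coCartesian edges.
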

\begin{proof}
   We will show that condition $i)$ is satisfied if and only if $\Nsc(f)$ is locally fibred and that condition $ii)$ is satisfied if and only if the collection of coCartesian triangles is functorial. It is clear that the collection of coCartesian triangles in $\Nsc(\C)$ defines a marking on the mapping spaces in $\C$. It is easy to verify that if $\Nsc(f)$is locally fibred then $i)$ holds. The proof of the converse is almost the same as the proof of Propoposition 3.1.3 in \cite{GHL_LaxLim}. Using their notation we only need to observe that $\sqcap^{n-1,1}_0 \to \square^{n}$ is in our case an anodyne morphism in the coCartesian model structure.

   To finish the proof we will show that condition $ii)$ holds if and only if the collection of coCartesian triangles is functorial. Let $x,y,z \in \C$, and observe that in order to show that the map
  \[
    \func{\C(x,y) \times \C(y,z) \to \C(x,z)}
  \]
  preserves coCartesian edges it suffices to prove the particular cases where one of the two morphisms we want to compose is degenerate. Let $\ell_i:\OO^3 \to \C$ be a functor of $\iCat_{\infty}\!$-categories sending the edge corresponding to the triangle $\Delta^{\{i-1,i,i+1\}}$ to an equivalence in $\C(\ell_i(i-1),\ell_i(i+1))$. We will further assume that every edge is coCartesian except possibly the edge corresponding to the face of $\Delta^3$ missing $i$. Let us consider a pair of commutative diagrams
  \[
    \begin{tikzcd}[ampersand replacement=\&]
      013 \arrow[r]  \& 0123 \\
      03 \arrow[u,circled] \arrow[r] \& 023 \arrow[u,"\simeq"]
    \end{tikzcd} \quad \quad \quad \quad \quad  \begin{tikzcd}[ampersand replacement=\&]
      013 \arrow[r,"\simeq"]  \& 0123 \\
      03 \arrow[u] \arrow[r,circled] \& 023 \arrow[u]
    \end{tikzcd}
  \]
  that we interpreted as the image of the morphism $\OO^3(0,3) \to \C(\ell_i(0),\ell_i(3))$ for $i=1,2$. We have circled in both diagrams the coCartesian edges. Note that in both cases the full diagram will consist of coCartesian edges if and only if one of the unmarked edges is coCartesian. Let us take for example the first diagram. Then we see that if coCartesian edges are stable under precomposition along degenerate edges then the top horizontal morphism must be coCartesian. This in turn implies that the bottom horizontal morphism is coCartesian and the triangle in $\Delta^3 \to \Nsc(\C)$ missing the vertex $1$ must be coCartesian. A similar argument runs for the second diagram. This shows that $ii)$ implies that the collection of coCartesian triangles is functorial. We leave the converse as an easy exercise for the reader.
\end{proof}

\begin{definition}
  Let $f:\C \to \D$ be a map of $\iCat_{\infty}\!$-categories. An edge $e:x \to y$ is said to be $f$-Cartesian if for every $z \in \C$ the following diagram
  \[
    \begin{tikzcd}[ampersand replacement=\&]
      \C(z,x) \arrow[d] \arrow[r] \& \C(z,y) \arrow[d] \\
      \D(f(z),f(x)) \arrow[r] \& \D(f(z),f(y))
    \end{tikzcd}
  \]
  is a homotopy pullback square in $\on{Set}_{\Delta}^+$.
\end{definition}

The next theorem follows readily from \autoref{prop:enrichefunctoriallyfibred}.

\begin{theorem}\label{thm:enriched}
  Let $f:\C \to \D$ be a fibration of $\iCat_{\infty}\!$-categories. Then $\Nsc(f)$ is a 2-Cartesian fibration if and only if the following conditions hold:
  \begin{itemize}
    \item[i)] For every $x,y \in \C$ the map $\C(x,y) \to \D(f(x),f(y))$ is a coCartesian fibration of $\infty$-categories.
    \item[ii)] The composition law in $\C$ preserves coCartesian edges.
    \item[iii)] For every morphism $e:d \to f(y)$ in $\D$. There exists an $f$-Cartesian lift $\hat{e}:\hat{d} \to y$ with $f(\hat{e})=e$.
  \end{itemize}
\end{theorem}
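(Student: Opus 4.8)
The plan is to string together the two structural results already in hand---\autoref{prop:enrichefunctoriallyfibred} and \autoref{thm:characterization2Fib}---so that the three conditions in the statement line up exactly with the data needed to recognize $\Nsc(f)$ as an $\mathbf{O2C}$-fibration. Recall that being an $\mathbf{O2C}$-fibration decomposes into three layers: functorial fibredness, the requirement that every degenerate edge be $p$-Cartesian, and the existence of $p$-Cartesian lifts of morphisms in the base. My strategy is to match these three layers, respectively, with conditions (i)+(ii), an automatic verification, and condition (iii), and then to feed the result into \autoref{thm:characterization2Fib}.

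First I would invoke \autoref{prop:enrichefunctoriallyfibred} verbatim: it asserts that $\Nsc(f)$ is functorially fibred if and only if conditions (i) and (ii) hold. This is the heart of the correspondence and requires no new work. To promote functorial fibredness to an $\mathbf{O2}$-fibration I must check that every degenerate edge of $\Nsc(\C)$ is $p$-Cartesian. This is a routine point: an identity edge $s_0(x)$ presents, for each object $z$, a mapping-space square whose horizontal legs are both induced by composition with an identity and hence are identities, so the square is trivially a homotopy pullback; equivalently, one fills the relevant $\Lambda^n_n$-horns directly using local fibredness, which avoids any circularity with \autoref{cor:ghlmapping}. Either way, functorial fibredness together with (i)+(ii) yields that $\Nsc(f)$ is an $\mathbf{O2}$-fibration.

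Next I would identify the $p$-Cartesian edges of $\Nsc(\C)$ with the $f$-Cartesian edges of $\C$. Under the natural equivalence $\Nsc(\C)(a,b)\simeq \C(a,b)$ between the scaled-nerve mapping $\infty$-categories and the enriched hom-objects, the homotopy-pullback criterion of \autoref{cor:ghlmapping} characterizing strongly $p$-Cartesian edges is word for word the defining condition for an edge to be $f$-Cartesian. Combining this with \autoref{prop:quasi}, which collapses the distinction between $p$-Cartesian and strongly $p$-Cartesian for $\mathbf{O2}$-fibrations, shows that an edge is $p$-Cartesian in $\Nsc(\C)$ exactly when it is $f$-Cartesian in $\C$. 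Consequently the existence clause of \autoref{def:fibrant2Cart} is precisely condition (iii). At this point $\Nsc(f)$ is functorially fibred and, with (iii), satisfies the defining property of an $\mathbf{O2C}$-fibration, so \autoref{thm:characterization2Fib} delivers that $(\Nsc(\C),E_X,T_X\subseteq C_X)\to \Nsc(\D)$ is a 2-Cartesian fibration---here $E_X$ is the collection of $f$-Cartesian edges and $C_X$ the functorial family of coCartesian triangles. Running every biconditional in reverse gives the converse: a 2-Cartesian fibration has the right lifting property against the generating mb anodyne maps, which forces it to be functorially fibred and $\mathbf{O2C}$, whence (i)--(iii).

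The only genuine friction I anticipate is the identification of the two flavours of Cartesian edge. The notion of $f$-Cartesian is phrased as a homotopy pullback of marked mapping objects in $\Set_\Delta^+$, whereas $p$-Cartesianness is a horn-lifting condition; bridging them relies on \autoref{cor:ghlmapping} together with the fact that the scaled-nerve mapping $\infty$-categories compute the enriched homs compatibly with the relevant markings and equivalences. Once that dictionary is in place, the theorem is, as advertised, a direct assembly of \autoref{prop:enrichefunctoriallyfibred}, \autoref{cor:ghlmapping}, \autoref{prop:quasi}, and \autoref{thm:characterization2Fib}, with no further combinatorial input.
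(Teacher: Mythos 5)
Your proposal is correct and is precisely the assembly the paper intends: the paper's entire proof is the remark that the theorem ``follows readily from \autoref{prop:enrichefunctoriallyfibred},'' and your chain---\autoref{prop:enrichefunctoriallyfibred} for conditions (i) and (ii), the identification of $f$-Cartesian with $p$-Cartesian edges via \autoref{cor:ghlmapping} and \autoref{prop:quasi} for condition (iii), and \autoref{thm:characterization2Fib} to conclude---is exactly the intended route, with the added merit of flagging (and sidestepping) the potential circularity in verifying that degenerate edges are $p$-Cartesian.
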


\begin{definition}
  Let $\on{N}_2: \on{2Cat} \to \on{Set}^{+}_{\Delta}$ be the hom-wise nerve functor. We say that a functor of 2-categories $f: \CC \to \DD$ is a 2-Cartesian fibration if and only if $N_2(f)$ satisfies the conditions of \autoref{thm:enriched}.
\end{definition}

\begin{remark}
  We would like to point out that the definition above (after taking the pertinent duals) recovers the definition of 2-fibration presented in \cite{Buckley}. In particular, it follows from \autoref{thm:enriched} that our definition generalises the classical notion of a 2-fibration to the realm of $\infty$-bicategories.
\end{remark}

\newpage

\end{document}